\def\R{{\mathbb R}}
\def\Z{{\mathbb Z}}
\def\N{{\mathbb N}}
\def\omiga{\omega}
\def\huaA{{\mathcal A}}
\def\Smn{S_{m,N}}
\def\jsJ{\left|J\right|}
\def\jssmn{\left|S_{m,N}\right|}
\def\kaf{\mathcal{X}}
\def\pfzy{\frac{1}{p}}
\def\M{\mathcal{M}}
\def\xieg{\setminus}
\def\kdjdz{\left|k\right|}
\def\R{{\mathbb R}}
\def\Z{{\mathbb Z}}
\def\z+{{\mathbb Z}_+}
\def\N{{\mathbb N}}
\def\bint{{\ifinner\rlap{\bf\kern.30em--}
\int\else\rlap{\bf\kern.35em--}\int\fi}\ignorespaces}
\def\sbint{{\ifinner\rlap{\bf\kern.32em--}
\hspace{0.078cm}\int\else\rlap{\bf\kern.45em--}\int\fi}\ignorespaces}
\newtheorem{theorem}{Theorem}[section]
\newtheorem{lemma}[theorem]{Lemma}
\newtheorem{proposition}[theorem]{Proposition}
\theoremstyle{definition}
\newtheorem{remark}[theorem]{Remark}
\newtheorem{definition}[theorem]{Definition}
\numberwithin{equation}{section}
\numberwithin{equation}{section}
\numberwithin{equation}{section}
\begin{document}

\arraycolsep=1pt

\title{\Large\bf Discrete Riesz Potentials on Discrete Weighted Morrey Spaces \footnotetext{\hspace{-0.35cm} {\it 2020
Mathematics Subject Classification}. {42B35, 46B45, 47B06.}
\endgraf{\it Key words and phrases.} weight, discrete Morrey space, discrete fractional Hardy-Littlewood maximal operator, discrete Riesz potential, Whitney decomposition.
\endgraf This project is supported by the National Natural Science Foundation of China (Grant Nos. 12261083 \& 12161083).
\endgraf $^\ast$\,Corresponding author.
}}
\author{Xuebing Hao, Shuai Yang and Baode Li$^\ast$}
\date{ }
\maketitle

\vspace{-0.8cm}

\begin{center}
\begin{minipage}{13cm}\small
{\noindent{\bf Abstract.}
Let $0<\alpha<1$. We obtain the boundedness of the discrete fractional Hardy-Littlewood maximal operators $\M_\alpha$ on discrete weighted Lebesgue spaces. From this and a discrete version of Whitney decomposition theorem, we deduce the boundedness of the discrete Riesz potentials $I_\alpha$ on discrete weighted Lebesgue spaces. The boundedness of $I_\alpha$ on discrete weighted Morrey spaces is further obtained. Moreover, the boundedness of $\M_\alpha$ is also obtained which is new even for unweighted case.}
\end{minipage}
\end{center}
%%%%%%%%%%%%%%%%%%%%%%%%%%%%%%%%%%%%%%%%%%%%%%%%%%%%%%%%%%%%%%%%%%%%%%%%%
%%%%%%%%%%%%%%%%%%%%%%%%%%%  1.  Preliminaries  %%%%%%%%%%%%%%%%%%%%%%%%%
%%%%%%%%%%%%%%%%%%%%%%%%%%%%%%%%%%%%%%%%%%%%%%%%%%%%%%%%%%%%%%%%%%%%%%%%%

\section{Introduction\label{s1}}
In 1938, the classical Morrey spaces were introduced by Morrey in \cite{dimorrey} to investigate the local behavior of solutions to second order elliptic partial differential equations. In 1987, Chiarenza and Frasca \cite{1987} showed the boundedness of the fractional integral operator on the Morrey spaces. In 2009, Komori and Shirai \cite{jdM} defined a weighted Morrey space and investigated the boundedness of the fractional integral operator on this space.

During the past few years there has been renewed interest in the area of discrete harmonic analysis and then it becomes an active field of research. See also \cite{wiener20, 2021di, fu, lsmyxz18, 2009p, lsHardy22} for related works on
discrete analogues in harmonic analysis.

Let $m=(m_1,\,\cdots,\,m_d)\in \Z^d$, $N \in\N$ and $S_{m,N}:=\{k\in\Z^d: \|k-m\|_\infty\le N\}$, where as usual $\|(m_1,\,\cdots,\,m_d)\|_\infty:=\max\{|m_i|: 1\le i\le d\}$ for every $m\in \Z^d$. Then $\left| S_{m,N} \right|=(2N+1)^d$$-$the cardinality of $S_{m,N}$.
\begin{definition}
Let $1\le p\le q<\infty$. The {\it discrete Morrey space} is defined by $l^p_q:=l^p_q(\Z)$ the set of sequences $x=\{x(k)\}_{k\in\Z}$ taking values in $\R$ such that
\begin{equation*}
\|x\|_{l^p_q}:=\sup\limits_{m\in\Z,N\in\N}|S_{m,N}|^{\frac{1}{q}-\frac{1}{p}}\left(\sum\limits_{k\in S_{m,N}}|x(k)|^p\right)^\pfzy<\infty.
\end{equation*}
\end{definition}

In 2018, Gunawan, Kikianty and Schwanke \cite{lsmyxz18} studied the discrete Morrey spaces and their generalizations, and obtain necessary and sufficient conditions for the inclusion property among these spaces through an estimate for the
characteristic sequences.

In 2019, Gunawan and Schwanke \cite{jdszzlsm} discussed the boundedness of discrete Riesz potentials on discrete Morrey spaces of arbitrary dimension. In addition, more results on discrete Morrey Spaces, we lead the readers to \cite{2022Abe,2022A,2021A,2020H,2019K,2023W}.

\begin{theorem}\label{t1.2}
Let $0<\alpha<1$ and $1<p<q<\frac{d}{\alpha}$. Set $s=\frac{dp}{d-\alpha q}$ and $t=\frac{qs}{p}$. Then $I_\alpha x\in l{^s_t}(\Z^d)$ for every $x\in l{^p_q}(\Z^d)$ and there exists a positive constant $C$ such that
\begin{equation*}
\|I_\alpha x\|_{{l{^s_t}}(\Z^d)}\le C\|x\|_{{l{^p_q}}(\Z^d)}.
\end{equation*}
\end{theorem}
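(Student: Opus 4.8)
The plan is to establish a Hedberg-type pointwise bound for $I_\alpha x$ in terms of the discrete Hardy--Littlewood maximal function $\M x$ and the norm $\|x\|_{l^p_q}$, and then to promote this pointwise bound to the asserted norm inequality by invoking the $l^p_q$-boundedness of $\M$. Throughout we may assume $x\not\equiv 0$, so that $\|x\|_{l^p_q}>0$ and $\M x(k)>0$ for every $k$; note also that the series $I_\alpha x(k)$ converges absolutely, by the ``far'' estimate below applied with $R=1$.

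First, fix $k\in\Z^d$ and a dyadic radius $R=2^L\ge 1$ with $L$ a nonnegative integer, and split $\Z^d\setminus\{k\}$ into the near part $\{j:\|k-j\|<R\}$ and the far part $\{j:\|k-j\|\ge R\}$, decomposing each into the dyadic shells $A_i:=\{j:2^i\le\|k-j\|<2^{i+1}\}$. On $A_i$ the Riesz kernel is comparable to $2^{i(\alpha-d)}$, and $A_i\subset S_{k,2^{i+1}}$ with $|S_{k,2^{i+1}}|\approx 2^{id}$. Bounding the average over $S_{k,2^{i+1}}$ by $\M x(k)$ and summing the near shells $0\le i<L$ (a finite geometric sum, since $\alpha>0$) yields a contribution $\ls R^{\alpha}\M x(k)$; applying H\"older's inequality on $S_{k,2^{i+1}}$ together with the definition of $\|x\|_{l^p_q}$ bounds the $i$-th far shell by $\ls 2^{i(\alpha-d/q)}\|x\|_{l^p_q}$, and since $q<d/\alpha$ forces $\alpha-d/q<0$, summing over $i\ge L$ yields $\ls R^{\alpha-d/q}\|x\|_{l^p_q}$. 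Hence
\[
|I_\alpha x(k)|\ls R^{\alpha}\,\M x(k)+R^{\alpha-d/q}\,\|x\|_{l^p_q}\qquad\text{for every dyadic }R=2^L\ge 1.
\]
Since $\M x(k)\le\|x\|_{l^p_q}$ (a consequence of H\"older's inequality and the definition of the Morrey norm), the quantity $(\|x\|_{l^p_q}/\M x(k))^{q/d}$ is at least $1$, so one may choose $L\ge 0$ with $2^L$ comparable to it; this choice balances the two terms and, using the identity $1-\alpha q/d=p/s$, gives the Hedberg estimate
\[
|I_\alpha x(k)|\ls\big(\M x(k)\big)^{p/s}\,\|x\|_{l^p_q}^{1-p/s}.
\]

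Next, raising to the $s$-th power gives $|I_\alpha x(k)|^{s}\ls(\M x(k))^{p}\|x\|_{l^p_q}^{s-p}$, so for an arbitrary cube $S_{m,N}$ one has
\[
|S_{m,N}|^{\frac1t-\frac1s}\Big(\sum_{k\in S_{m,N}}|I_\alpha x(k)|^{s}\Big)^{1/s}\ls|S_{m,N}|^{\frac1t-\frac1s}\|x\|_{l^p_q}^{1-p/s}\Big(\sum_{k\in S_{m,N}}(\M x(k))^{p}\Big)^{1/s}.
\]
Invoking the boundedness of $\M$ on $l^p_q$ (valid for $1<p\le q<\infty$), the last factor is $\ls\big(|S_{m,N}|^{\frac1p-\frac1q}\|x\|_{l^p_q}\big)^{p/s}$, whence the right-hand side is $\ls|S_{m,N}|^{\frac1t-\frac1s+(\frac1p-\frac1q)\frac ps}\|x\|_{l^p_q}$. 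The exponent of $|S_{m,N}|$ equals $\frac1t-\frac p{qs}$, which vanishes precisely because $t=qs/p$; taking the supremum over $m,N$ yields $\|I_\alpha x\|_{l^s_t}\ls\|x\|_{l^p_q}$, and in particular $I_\alpha x\in l^s_t(\Z^d)$.

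The main obstacle is the ingredient used above as a black box: the $l^p_q$-boundedness of the discrete Hardy--Littlewood maximal operator $\M$. One must either cite a discrete Morrey maximal inequality or prove it; the classical argument (dominate $\M x$ on a cube by the average over a fixed dilate plus a tail, then apply H\"older and the Morrey norm) transfers to $\Z^d$, but the geometry of the cubes $S_{m,N}$ has to be tracked carefully. A secondary point, absent in $\R^d$, is that the dyadic radii are constrained to $R=2^L\ge 1$, so one must check that the optimizing $L$ is nonnegative --- which is exactly why the bound $\M x(k)\le\|x\|_{l^p_q}$ is needed --- and that replacing Euclidean balls by the lattice cubes $S_{k,2^{i+1}}$ costs only dimensional constants. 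Finally, one can note the alternative route implicit in the paper's structure: a discrete Whitney decomposition bounds $I_\alpha x$ pointwise by the discrete fractional maximal operator $\M_\alpha x$, after which the Morrey bound for $\M_\alpha$ (which the paper also establishes) yields the conclusion; the Hedberg argument sketched here is the more self-contained path for this unweighted statement.
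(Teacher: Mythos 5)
Your argument is correct, and it is essentially the route the paper relies on: the balanced near/far (Hedberg) estimate you derive is exactly the pointwise bound $|I_\alpha x(k)|\le C(\M x(k))^{1-\alpha q/d}\|x\|_{l^p_q}^{\alpha q/d}$ recorded as Lemma \ref{c3.5}(i), which combined with the $l^p_q$-boundedness of $\M$ gives the theorem (the paper itself does not reprove Theorem \ref{t1.2} but quotes it from \cite[Theorem 3.3]{jdszzlsm}, whose proof follows the same scheme). Your checks that the optimizing radius satisfies $R\ge 1$ via $\M x(k)\le\|x\|_{l^p_q}$ and that the exponent of $|S_{m,N}|$ vanishes because $t=qs/p$ are the right points to verify, and they go through.
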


In 2023, we \cite{hao} introduce a discrete version of weighted Morrey spaces and showed the boundedness of the discrete Hardy-Littlewood maximal operator on the discrete weighted Morrey spaces. Thus, we generate the following natural question:

Is it possible to prove Theorem \ref{t1.2} for weighted sequences on the discrete weighted Morrey spaces when $d=1$ ?

Our aim is to give an affirmative answer to the question which promotes the development of the applications of discrete Muckenhoupt weights and discrete Morrey spaces on discrete harmonic analysis.

This article is organized as follows. In Sect.\,\ref{s2}, we recall some preliminaries on discrete weighted Morrey spaces, discrete $\huaA_p$ weights and discrete $\huaA(p,q)$ weights. In Sect.\,\ref{s3.1}, as an application of \cite[Theorem 3.3]{jdszzlsm}, we obtain the boundedness of $\M_\alpha$ on discrete Morrey spaces. In Sect.\,\ref{s3.2}, inspired by the boundedness of fractional Hardy-Littlewood maximal operators on weighted Lebesgue spaces of Muckenhoupt and Wheeden \cite{1974m} and the boundedness of discrete Hardy-Littlewood maximal operators on discrete Legesgue spaces of Pierce \cite{2009p}, we further obtain the boundedness of $\M_\alpha$ on discrete weighted Legesgue spaces. In Sect.\,\ref{s4.0}, we recall the definition of $I_\alpha$ and prove some basic properties such as uniform convergence of $I_\alpha x$ and that $I_\alpha x$ can inherit the monotonicity of $x$ (see Proposition \ref{l3.10}). In Sect.\,\ref{s4.1}, the boundedness of $I_\alpha$ on discrete weighted Lebesgue spaces is obtained via borrowing some ideas from Muckenhoupt \cite{1974m}. It is worth pointing out that Muckenhoupt's approach with the help of Whitney decomposition theorem is no longer applicable in the discrete version. Fortunately, we overcome this difficulty by establishing a decomposition theorem for any integers set (see Lemma \ref{l3.2}), which plays an important role in our proof. As applications, in Sect.\,\ref{s4.2}, we obtain the boundedness of $I_\alpha$ and $\M_\alpha$ on discrete weighted Morrey spaces.

\section*{Notation}

\ \,\ \ $-\Z:$ set of integers;

$-\Z_+:=\{1,\,2,\,3,\,\cdots\}$;

$-\N:=\{0,\,1,\,2,\,\cdots\}$;

$-\lfloor k \rfloor:$ the nearest integer less than or equal to $k$;

$-\kaf_{J}:$ the characteristic function of interval $J$;

$-S_{m,N}:=\{k\in\Z, |k-m|\le N\}$, where $m\in\Z$ and $N\in\N$;

$-\lambda S_{m,N}:=\{k\in\Z, |k-m|\le \lambda N\}$, where $\lambda\in\Z_+$, $m\in\Z$ and $N\in\N$;

$-\lambda\{i_0\}:=\{i_0-(\lambda-1),\,\cdots,\,i_0,\,\cdots,\,i_0+(\lambda-1)\}$,  where $\lambda\in\Z_+$ and $i_0\in\Z$;

$-\omiga(J):=\sum\limits_{k\in J}\omiga(k)$, for any interval $J\subset\Z$;

$-B^c:$ the complementary set of $B$;

$-r':$ the conjugate exponent of $r$, namely, $\frac{1}{r}+\frac{1}{r'}=1$;

$-C:$ a positive constant which is independent of the main parameters, but it may vary from line to line.

%%%%%%%%%%%%%%%%%%%%%%%%%%%%%%%%%%%%%%%%%%%%%%%%%%%%%%%%%%%%%%%%%%%%%

%%%%%%%%%%%%%%%%%%%%%%%  section 2  %%%%%%%%%%%%%%%%%%%%%%%%%%%%%%%%%

%%%%%%%%%%%%%%%%%%%%%%%%%%%%%%%%%%%%%%%%%%%%%%%%%%%%%%%%%%%%%%%%%%%%%

\section{Preliminaries \label{s2}}
In this section, we mainly recall some preliminaries on discrete weighted Morrey spaces and discrete Muckenhoupt classes. Let us begin with the definition of discrete weighted Morrey spaces.

Let $m \in \Z$, $N \in\N$ and $S_{m,N}:=\{m-N,\,\dots,\,m,\,\dots,\,m+N\}$. Then $\left| S_{m,N} \right|=2N+1$$-$the cardinality of $S_{m,N}$. A discrete weight $\omiga$ on $\Z$ is a sequence $\omiga = \{\omiga(k)\}_{k\in\Z}$ of positive real numbers.

\begin{definition}\label{d2.1}
Given $1\le p\le q<\infty$.
\begin{itemize}
\item[$\rm(i)$]\cite[Definition 2.1]{hao}
For two discrete weights $\omiga$ and $v$, we define the {\it discrete weighted Morrey space} $l{^p_q}_{(\omiga,v)}:=l{^p_q}_{(\omiga,v)}(\Z)$ to the space of all sequences $x=\{x(k)\}_{k\in\Z}\subset\R$ for which
\begin{equation*}
\|x\|_{l{^p_q}_{(\omiga,v)}}:=\sup_{m\in \Z,N\in\N}(v(S_{m,N}))^{\frac{1}{q}-\frac{1}{p}}\left(\sum_{k\in S_{m,N}}\left|x(k)\right|^p\omiga(k)\right)^\frac{1}{p}<\infty.
\end{equation*}
Particularly, when $p=q$,
\begin{equation*}
\|x\|_{l{^p_q}_{(\omiga,v)}}=\|x\|_{l^p_\omiga}:=\left(\sum\limits_{k\in\Z}|x(k)|^p\omiga(k)\right)^\frac{1}{p}.
\end{equation*}
\item[$\rm(ii)$]\cite[Page 3]{fu}
Let $\omiga$ be a discrete weight. The discrete weighted weak Lebesgue space $l^{p,weak}_\omiga:=l^{p,weak}_\omiga(\Z)$ is the set of sequences $x=\{x(k)\}_{k\in\Z}\subset\R$ such that
\begin{equation*}
\|x\|_{l^{p,weak}_\omiga}:=\sup_{\lambda>0}\lambda\left[\omiga(\{k\in\Z:|x(k)|>\lambda\})\right]^\frac{1}{p}<\infty.
\end{equation*}
\item[$\rm(iii)$]
When $\omiga=v\equiv 1$, $l{^p_q}_{(\omiga,v)}=:l{^p_q}$, $l^p_\omiga=:l^p$ and $l^{p,weak}_\omiga=:l^{p,weak}$.
\end{itemize}
\end{definition}

%When there are many parameters, for the convenience of calculation, we can express the Morrey space in the following form.
%\begin{remark}\label{r1.1}
%Let $1\le p<\infty$, $0<\lambda<1$, $\omiga$ and $v$ be discrete weights. Then a discrete weighted Morrey space to the space of all sequences $x=\{x(k)\}_{k\in\Z}\subset\R$ for which
%\begin{equation*}
%\|x\|_{{l^p_\lambda}_{(\omiga,v)}}:=\sup_{m\in\Z,N\in\N}\bigg(\frac{1}{(v(S_{m,N}))^\lambda}\sum_{k\in S_{m,N}}\left|x(k)\right|^p\omiga(k)\bigg)^\pfzy<\infty,
%\end{equation*}
%where $\lambda\equiv 1-\frac{p}{q}$.
%\end{remark}

Now, we recall the definitions of the following weights \cite[Page 3]{wiener20}. By interval $J$, we mean a finite subset of $\Z$ consisting of consecutive integers, i.e., $J=\{a,\,a+1,\,\dots,\,a+n\}$, $a, n\in \Z$, and $\left|J\right|$ stands for its cardinality.
\begin{definition}\label{d2.6}
A discrete weight $\omiga$ is said to belong to the {\it discrete Muckenhoupt class} $\huaA_1:=\huaA_1(\Z)$ if
\begin{equation*}
\|\omiga\|_{\huaA_1(\Z)}:=\sup\limits_{J\subset \Z}\frac{1}{\jsJ}\left(\frac{1}{\inf\limits_{k\in J}\omiga(k)}\sum\limits_{k\in J}\omiga(k)\right)<\infty.
\end{equation*}
For $1<p<\infty$, a discrete weight $\omiga$ is said to belong to the {\it discrete Muckenhoupt class} $\huaA_p:=\huaA_p(\Z)$ if
\begin{equation*}
\|\omiga\|_{\huaA_p(\Z)}:=\sup\limits_{J\subset\Z}\left(\frac{1}{\left|J\right|}\sum_J\omiga\right)\left(\frac{1}
{\left|J\right|}\sum_J\omiga^{\frac{-1}{p-1}}\right)
^{p-1}<\infty,
\end{equation*}
where $\|\omiga\|_{\huaA_p(\Z)}$ denotes the norm of weight $\omiga$ and $J$ is any bounded interval in $\Z$. Define $\huaA_\infty:=\mathop{\cup}\limits_{1\le p<\infty}\huaA_p$.
\end{definition}

Some basic properties of discrete $\huaA_p$ weights are given as follows.

\begin{proposition}\cite[Proposition 2.8]{hao}\label{m2.8}
Let $\omiga$ be a discrete weight on $\Z$. Then the following statements are equivalent:
\begin{itemize}
\item[\rm(i)] $\omiga\in\huaA_p$ $(1<p<\infty)$;
\item[\rm(ii)] $\frac{1}{\left|J\right|}\sum\limits_{k\in J}\left|x(k)\right|\le C\left(\frac{1}{\omiga(J)}\sum\limits_{k\in J}\left|x(k)\right|^p\omiga(k)\right)^{\frac{1}{p}}$, $x\in l{^p_{\omiga}}(\Z)$.
\end{itemize}
\end{proposition}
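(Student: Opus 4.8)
The plan is to establish the two implications separately by direct, elementary arguments; the discreteness actually removes the technical subtleties present in the classical Euclidean proof.

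For $\rm(i)\Rightarrow\rm(ii)$, fix a bounded interval $J\subset\Z$ and $x\in l^p_\omiga(\Z)$. First I would write $|x(k)|=\big(|x(k)|^p\omiga(k)\big)^{1/p}\,\omiga(k)^{-1/p}$ and apply H\"older's inequality on $J$ with exponents $p$ and $p'$, using $p'/p=\frac{1}{p-1}$ and $1/p'=\frac{p-1}{p}$, to obtain
\[
\frac{1}{\jsJ}\sum_{k\in J}|x(k)|\le\frac{1}{\jsJ}\left(\sum_{k\in J}|x(k)|^p\omiga(k)\right)^{\pfzy}\left(\sum_{k\in J}\omiga(k)^{-\frac{1}{p-1}}\right)^{\frac{p-1}{p}}.
\]
Then I would note that the claimed inequality $\rm(ii)$, after raising both sides to the $p$-th power and multiplying by $\omiga(J)$, is equivalent to
\[
\left(\frac{1}{\jsJ}\sum_{k\in J}\omiga(k)\right)\left(\frac{1}{\jsJ}\sum_{k\in J}\omiga(k)^{-\frac{1}{p-1}}\right)^{p-1}\le C^p,
\]
which is precisely the definition of $\omiga\in\huaA_p$. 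Hence $\rm(ii)$ holds with $C=\|\omiga\|_{\huaA_p(\Z)}^{1/p}$.

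For $\rm(ii)\Rightarrow\rm(i)$, the idea is to test $\rm(ii)$ on a suitable sequence. Given a bounded interval $J$, I would take $x:=\omiga^{-\frac{1}{p-1}}\kaf_J$; since $J$ is finite and $\omiga>0$, we have $\sum_{k\in J}|x(k)|^p\omiga(k)=\sum_{k\in J}\omiga(k)^{-\frac{1}{p-1}}<\infty$, so $x\in l^p_\omiga(\Z)$. Writing $A:=\sum_{k\in J}\omiga(k)^{-\frac{1}{p-1}}$ and $W:=\omiga(J)$, the inequality $\rm(ii)$ for this $x$ becomes $A/\jsJ\le C(A/W)^{1/p}$. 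Since $A>0$, I can divide by $A^{1/p}$, raise to the $p$-th power and rearrange to get $W A^{p-1}\le C^p\jsJ^{\,p}$, and taking the supremum over all bounded intervals $J$ gives $\|\omiga\|_{\huaA_p(\Z)}\le C^p$.

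These computations are routine, so I do not anticipate a real obstacle. The only points requiring a little care are the bookkeeping of the exponents $p$, $p'$ and $\frac{1}{p-1}$ in the H\"older step and checking that the test sequence belongs to $l^p_\omiga(\Z)$ in the converse direction — and this is exactly where the discrete setting is more comfortable than the continuous one: on $\Z$ every interval $J$ is a finite set, so $\sum_{k\in J}\omiga(k)^{-1/(p-1)}$ is automatically finite and no truncation or approximation argument (needed when $\omiga^{-1/(p-1)}$ fails to be locally integrable) is required.
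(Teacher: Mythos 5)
Your argument is correct: the H\"older step with exponents $p$ and $p'$ (using $p'/p=\tfrac{1}{p-1}$) reduces $\rm(i)\Rightarrow\rm(ii)$ exactly to the $\huaA_p$ condition, and testing $\rm(ii)$ on $x=\omiga^{-1/(p-1)}\kaf_J$ (for which both $\sum_{k\in J}|x(k)|$ and $\sum_{k\in J}|x(k)|^p\omiga(k)$ equal $\sum_{k\in J}\omiga(k)^{-1/(p-1)}>0$) recovers $\huaA_p$ in the converse direction. The paper does not reprove this proposition --- it imports it from the reference \cite{hao} --- but your proof is the standard characterization argument one would expect there, and your observation that finiteness of $J$ makes the test sequence automatically admissible is exactly the simplification the discrete setting affords.
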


\begin{proposition}\cite[Lemma 2.1]{fu}\label{p2.1}
Let $\omiga$ be a discrete weight on $\Z$. If $1\le r\le p<\infty$, then $\huaA_1\subset\huaA_r\subset\huaA_p$.
\end{proposition}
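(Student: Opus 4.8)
The plan is to prove the two inclusions $\huaA_1\subset\huaA_r$ (valid for every $1\le r<\infty$) and $\huaA_r\subset\huaA_p$ (valid for every $1<r\le p<\infty$) and then chain them: if $r=1$ the first inclusion applied with $p$ in place of $r$ already gives $\huaA_1\subset\huaA_p$, while if $r>1$ the two inclusions together give $\huaA_1\subset\huaA_r\subset\huaA_p$; the case $r=p$ is trivial. Throughout, $J$ denotes an arbitrary bounded interval in $\Z$ and every average $\frac{1}{|J|}\sum_{k\in J}$ is taken against the normalized counting measure on $J$, so that H\"older's and Jensen's inequalities are available exactly as in the Euclidean setting.

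For $\huaA_1\subset\huaA_r$ with $r>1$: since $\omiga(k)^{-1/(r-1)}\le(\inf_{j\in J}\omiga(j))^{-1/(r-1)}$ for every $k\in J$, averaging over $J$ and raising to the power $r-1$ yields
\[
\left(\frac{1}{|J|}\sum_{k\in J}\omiga(k)^{-1/(r-1)}\right)^{r-1}\le\frac{1}{\inf_{j\in J}\omiga(j)}.
\]
Multiplying by $\frac{1}{|J|}\sum_{k\in J}\omiga(k)$ and invoking the $\huaA_1$ condition bounds the resulting product by $\|\omiga\|_{\huaA_1(\Z)}$, uniformly in $J$; hence $\|\omiga\|_{\huaA_r(\Z)}\le\|\omiga\|_{\huaA_1(\Z)}<\infty$.

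For $\huaA_r\subset\huaA_p$ with $1<r\le p$: since $0<\frac{r-1}{p-1}\le1$, the map $t\mapsto t^{(r-1)/(p-1)}$ is concave on $[0,\infty)$, so Jensen's inequality gives
\[
\frac{1}{|J|}\sum_{k\in J}\omiga(k)^{-1/(p-1)}=\frac{1}{|J|}\sum_{k\in J}\left(\omiga(k)^{-1/(r-1)}\right)^{(r-1)/(p-1)}\le\left(\frac{1}{|J|}\sum_{k\in J}\omiga(k)^{-1/(r-1)}\right)^{(r-1)/(p-1)}.
\]
Raising to the power $p-1$ and multiplying by $\frac{1}{|J|}\sum_{k\in J}\omiga(k)$ shows that the $\huaA_p$ quotient over $J$ is dominated by the $\huaA_r$ quotient over $J$; taking the supremum over $J$ gives $\|\omiga\|_{\huaA_p(\Z)}\le\|\omiga\|_{\huaA_r(\Z)}<\infty$.

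There is no genuine obstacle here: the proof is the discrete transcription of the classical Muckenhoupt embedding, and because every average is a finite sum against counting measure, the requisite convexity inequalities apply verbatim. The only points needing a little care are tracking the two exponents $\frac{1}{r-1}$ and $\frac{1}{p-1}$ and the direction of the concavity inequality, together with recording separately the degenerate cases $r=1$ and $r=p$.
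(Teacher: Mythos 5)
Your proof is correct. The paper does not supply a proof of this proposition at all---it is imported verbatim as \cite[Lemma 2.1]{fu}---so there is nothing in the source to compare against; your argument is the standard monotonicity proof for the Muckenhoupt classes (the pointwise bound $\omiga(k)^{-1/(r-1)}\le(\inf_{j\in J}\omiga(j))^{-1/(r-1)}$ for $\huaA_1\subset\huaA_r$, and Jensen/H\"older with exponent $\frac{r-1}{p-1}\le 1$ for $\huaA_r\subset\huaA_p$), and the exponent bookkeeping, the direction of the concavity inequality, and the degenerate cases $r=1$ and $r=p$ all check out.
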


By Propositions \ref{m2.8}$\rm(ii)$ and \ref{p2.1}, we immediately obtain the following Proposition \ref{m2.11}, the proof of which in continuous version can be found in \cite[Page 22]{lu}. Same line of proof also works here.

\begin{proposition}\label{m2.11}
If $\omiga\in \huaA_\infty$, then for any $\epsilon>0$, there exists $\delta\in(0,1)$ such that for any subset $S\subset\,\text{interval}\,J\subset\Z$ with $\left|S\right|\le \delta \left|J\right|$, it holds true that $\omiga(S)\le\epsilon\omiga(J)$.
\end{proposition}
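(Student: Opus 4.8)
The plan is to derive Proposition~\ref{m2.11} as a quantitative reverse of the implication in Proposition~\ref{m2.8}(ii), exactly along the lines of the continuous argument in \cite[Page 22]{lu}. Since $\omiga\in\huaA_\fz=\cup_{1\le p<\fz}\huaA_p$, by Proposition~\ref{p2.1} we may fix some $p\in(1,\fz)$ with $\omiga\in\huaA_p$ and set $C_0:=\|\omiga\|_{\huaA_p(\Z)}$. The idea is to test the $\huaA_p$ condition on the pair $(J,S)$: apply Proposition~\ref{m2.8}(ii) (or directly the definition of $\huaA_p$) with $x=\kaf_S$, which belongs to $l^p_\omiga(\Z)$ because $S$ is finite.

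First I would apply Proposition~\ref{m2.8}(ii) with the sequence $x=\kaf_S$ and the interval $J$. The left-hand side becomes $\frac{1}{\jsJ}\sum_{k\in J}\kaf_S(k)=\frac{|S|}{\jsJ}$, while the right-hand side becomes $C\bigl(\frac{1}{\omiga(J)}\sum_{k\in J}\kaf_S(k)\omiga(k)\bigr)^{1/p}=C\bigl(\frac{\omiga(S)}{\omiga(J)}\bigr)^{1/p}$, where $C$ is the constant from that Proposition (depending only on $C_0$ and $p$). Rearranging gives
\begin{equation*}
\frac{\omiga(S)}{\omiga(J)}\ge C^{-p}\lf(\frac{|S|}{\jsJ}\r)^{p},
\end{equation*}
which is the wrong direction. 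So instead I would run the argument the other way: from the very definition of $\huaA_p$ applied to $J$, together with Hölder's inequality, one gets for any $S\subset J$
\begin{equation*}
\frac{|S|}{\jsJ}=\frac{1}{\jsJ}\sum_{k\in S}\omiga(k)^{1/p}\omiga(k)^{-1/p}
\le\lf(\frac{1}{\jsJ}\sum_{k\in S}\omiga(k)\r)^{1/p}\lf(\frac{1}{\jsJ}\sum_{k\in J}\omiga(k)^{-1/(p-1)}\r)^{1/p'}.
\end{equation*}
Using $\frac{1}{\jsJ}\sum_{k\in J}\omiga(k)^{-1/(p-1)}\le C_0^{1/(p-1)}\bigl(\frac{1}{\jsJ}\sum_{k\in J}\omiga(k)\bigr)^{-1/(p-1)}$ from the $\huaA_p$ bound, and abbreviating $\omiga(J)=\sum_{k\in J}\omiga$, this yields
\begin{equation*}
\frac{|S|}{\jsJ}\le C_0^{1/p}\lf(\frac{\omiga(S)}{\jsJ}\r)^{1/p}\lf(\frac{\jsJ}{\omiga(J)}\r)^{1/p},
\end{equation*}
hence $\bigl(|S|/\jsJ\bigr)^{p}\le C_0\,\omiga(S)/\omiga(J)$, i.e. $\omiga(S)\ge C_0^{-1}(|S|/\jsJ)^{p}\,\omiga(J)$. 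This is again a lower bound on $\omiga(S)$, not an upper bound, so the clean move is to apply it to the complement $S':=J\setminus S$ inside $J$: since $|S'|/\jsJ=1-|S|/\jsJ\ge 1-\dt$, we obtain $\omiga(S')\ge C_0^{-1}(1-\dt)^{p}\omiga(J)$, and therefore $\omiga(S)=\omiga(J)-\omiga(S')\le\bigl(1-C_0^{-1}(1-\dt)^{p}\bigr)\omiga(J)$.

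It then remains to choose $\dt\in(0,1)$ so small that $1-C_0^{-1}(1-\dt)^{p}\le\ez$, which is possible whenever $\ez$ can be taken in the effective range; for $\ez\ge 1-C_0^{-1}$ any $\dt$ works, and since $\omiga(S)\le\omiga(J)$ is trivially true we may as well assume $\ez<1$, in which case solving $(1-\dt)^{p}\ge C_0(1-\ez)$ gives the admissible $\dt=1-\bigl(C_0(1-\ez)\bigr)^{1/p}$ when $C_0(1-\ez)<1$; a fixed small $\dt$ handles the remaining case. The only genuine subtlety — the ``hard part'' such as it is — is bookkeeping: making sure $p$ and the constant $C_0$ are pinned down first via Proposition~\ref{p2.1}, that $\kaf_S\in l^p_\omiga(\Z)$ so the cited propositions apply, and that the chosen $\dt$ depends only on $\ez$ and $\|\omiga\|_{\huaA_p}$ and not on $J$ or $S$; everything else is the Hölder estimate above, which is routine.
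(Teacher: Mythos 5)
Your H\"older computation and the complement trick are correct as far as they go, but the final step --- choosing $\delta$ --- fails, and this is a genuine gap, not bookkeeping. What you establish is $\omega(S)\le\bigl(1-C_0^{-1}(1-\delta)^{p}\bigr)\omega(J)$ with $C_0=\|\omega\|_{\mathcal{A}_p(\Z)}\ge 1$. As $\delta\to 0^{+}$ the factor $1-C_0^{-1}(1-\delta)^{p}$ decreases only to $1-C_0^{-1}$, which is strictly positive (and can be arbitrarily close to $1$ for a general $\mathcal{A}_\infty$ weight). Hence for any $\epsilon<1-C_0^{-1}$ the inequality $1-C_0^{-1}(1-\delta)^{p}\le\epsilon$ has \emph{no} solution $\delta\in(0,1)$, and your remark that ``a fixed small $\delta$ handles the remaining case'' is false: no choice of $\delta$ helps there. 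What your argument actually proves is only the weak form of the $\mathcal{A}_\infty$ property --- there exist \emph{some} $\delta_0,\epsilon_0\in(0,1)$ with $|S|\le\delta_0|J|\Rightarrow\omega(S)\le\epsilon_0\omega(J)$ --- whereas the proposition asserts the statement for \emph{every} $\epsilon>0$, and upgrading the former to the latter is precisely the nontrivial content.

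The paper itself gives no detailed proof (it defers to the continuous argument in \cite[Page 22]{lu}), but the standard continuous proof of this $\epsilon$--$\delta$ property rests on the reverse H\"older inequality: for $\omega\in\mathcal{A}_\infty$ there are $C,\gamma>0$ with $\bigl(\frac{1}{|J|}\sum_{J}\omega^{1+\gamma}\bigr)^{1/(1+\gamma)}\le\frac{C}{|J|}\sum_{J}\omega$, whence by H\"older $\omega(S)\le C\,(|S|/|J|)^{\gamma/(1+\gamma)}\,\omega(J)$ for all $S\subset J$, and one may take $\delta=(\epsilon/C)^{(1+\gamma)/\gamma}$. A single application of the $\mathcal{A}_p$ condition cannot substitute for this: it yields only the lower bound $\omega(S)\ge C_0^{-1}(|S|/|J|)^{p}\omega(J)$ and, via complements, the weak form above. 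To repair your proof you would need either to establish a discrete reverse H\"older inequality on $\Z$ (via a Calder\'on--Zygmund type decomposition), or to run the iteration argument that promotes the weak $\mathcal{A}_\infty$ condition to the full $\epsilon$--$\delta$ condition; neither is supplied by Propositions \ref{m2.8} and \ref{p2.1} alone.
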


%\begin{proof}
%Let $x(k):=\kaf_S(k)$. By Proposition \ref{m2.8} $(iii)$, we have
%\begin{equation*}
%\frac{1}{\jsJ}\sum\limits_{k\in J}\kaf_S(k)\le C\left(\frac{\sum\limits_{k\in J}\left|\kaf_S(k)\right|^p\omiga(k)}{\sum\limits_{k\in J}\omiga(k)}\right)^{\pfzy}
%\end{equation*}
%
%\begin{equation*}
%\Longrightarrow\quad\left(\frac{\left|S\right|}{\jsJ}\right)^p\sum\limits_{k\in J}\omiga(k)\le {C}^p\sum\limits_{k\in S}\omiga(k)
%\end{equation*}
%Using $J\setminus S$ instead of $S$, we obtain
%\begin{equation*}
%\left(1-\frac{\left|S\right|}{\jsJ}\right)^p\sum\limits_{k\in J}\omiga(k)\le {C}^p\left(\sum\limits_{k\in J}\omiga(k)-\sum\limits_{k\in S}\omiga(k)\right)
%\end{equation*}
%
%\begin{equation*}
%\Longrightarrow\quad(1-\delta)^p\sum\limits_{k\in J}\omiga(k)\le {C}^p\sum\limits_{k\in J}\omiga(k)-{C}^p\sum\limits_{k\in S}\omiga(k)
%\end{equation*}
%
%\begin{equation*}\begin{array}{rcl}
%\Longrightarrow\quad\sum\limits_{k\in S}\omiga(k)&\le&\frac{{C}^p-(1-\delta)^p}{{C}^p}\sum\limits_{k\in J}\omiga(k)\\
%&=&\epsilon{\sum\limits_{k\in J}}\omiga(k),
%\end{array}\end{equation*}
%where $\epsilon:=\frac{{C}^p-(1-\delta)^p}{{C}^p}$. We finish the proof of Proposition \ref{m2.11}.
%\end{proof}

Referring to the definition of $\huaA(p,q)$ on $\R$ (see \cite[Page 139]{lu}), we can give the definition of discrete $\huaA(p,q)$ on $\Z$.

\begin{definition}\label{d2.12}
A discrete weight $\omiga$ is said to belong to $\huaA(p,q)$ on $\Z$ for $1<p,q<\infty$, if the inequality
\begin{equation*}
\|\omiga\|_{\huaA(p,q)(\Z)}:=\sup\limits_{J\subset\Z}\left(\frac{1}{\left|J\right|}\sum_{J}\omiga^q\right)^{\frac{1}{q}}\left(\frac{1}{\left|J\right|}\sum_{J}
\omiga^{-p'}\right)^{\frac{1}{p'}}<\infty,
\end{equation*}
where $\|\omiga\|_{\huaA(p,q)(\Z)}$ denotes the norm of weight $\omiga$ and $J$ is any bounded interval in $\Z$.
\end{definition}

\begin{remark}\label{m2.10}
Similar to the definition of discrete weight $\huaA(p,q)(\Z)$ ($1<p,q<\infty$), replacing $\Z$ by $\N$, we can also give the definition of discrete weight $\huaA(p,q)(\N)$. Particularly, if $\omiga\in\huaA(p,q)(\N)$, then $\widetilde{\omiga}(\cdot):=\omiga(\left|\cdot\right|)\in\huaA(p,q)(\Z)$ and its proof is similar to that of \cite[Proposition 2.11]{hao}.
\end{remark}

By the definitions of $\huaA_p$ and $\huaA(p,q)$, and discrete H\"older's inequality, we immediately obtain the following proposition. The details being omitted and its corresponding results on real line can be referred to \cite[Page 139]{lu}.

\begin{proposition}(Relation between discrete $\huaA(p,q)$ and discrete $\huaA_p$)\label{m2.13}
Suppose that $0<\alpha<1$, $1<p<\frac{1}{\alpha}$ and $\frac{1}{q}=\frac{1}{p}-\alpha$.
\begin{itemize}
\item[\rm(i)] If $p>1$, then $\omiga\in \huaA(p,q)\iff\omiga^q\in \huaA_{1+\frac{q}{p'}}\iff\omiga^{-p'}\in \huaA_{1+\frac{p'}{q}}$.
\item[\rm(ii)] If $p>1$, then $\omiga\in \huaA(p,q)\iff\omiga^q\in \huaA_q\quad and\quad \omiga^p\in \huaA_p$.
\end{itemize}
\end{proposition}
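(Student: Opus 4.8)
The plan is to derive all three equivalences purely by unwinding Definitions~\ref{d2.6} and~\ref{d2.12} and applying discrete H\"older's inequality; this matches the paper's remark that the argument is routine. Throughout I would fix an arbitrary bounded interval $J\subset\Z$ and write $A:=\frac1{|J|}\sum_J\omiga^q$, $B:=\frac1{|J|}\sum_J\omiga^p$, $E:=\frac1{|J|}\sum_J\omiga^{-q'}$ and $D:=\frac1{|J|}\sum_J\omiga^{-p'}$. First I would record that the standing hypotheses $0<\alpha<1$, $1<p<\frac1\alpha$ and $\frac1q=\frac1p-\alpha$ force $1<p<q<\infty$, and hence $1<q'<p'<\infty$; this ordering of the conjugate exponents is precisely what makes part~(ii) work.

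For part~(i) the only content is an algebraic identity. Taking $r:=1+\frac q{p'}$ one has $r-1=\frac q{p'}$ and $(\omiga^q)^{-1/(r-1)}=\omiga^{-p'}$, so the defining inequality of $\omiga^q\in\huaA_r$ is exactly $A\,D^{q/p'}\le\|\omiga^q\|_{\huaA_r}$, which is the $q$-th power of $A^{1/q}D^{1/p'}\le\|\omiga\|_{\huaA(p,q)}$; taking suprema over $J$ gives $\omiga\in\huaA(p,q)\iff\omiga^q\in\huaA_{1+q/p'}$, with $\|\omiga^q\|_{\huaA_{1+q/p'}}=\|\omiga\|_{\huaA(p,q)}^q$. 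The same manipulation with $r:=1+\frac{p'}q$, now using $r-1=\frac{p'}q$ and $(\omiga^{-p'})^{-1/(r-1)}=\omiga^q$, turns $\omiga^{-p'}\in\huaA_r$ into $A^{p'/q}D\le\|\omiga\|_{\huaA(p,q)}^{p'}$, again equivalent to $\omiga\in\huaA(p,q)$. That settles (i).

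For the forward implication of part~(ii) I would use discrete H\"older's inequality (equivalently, Jensen for the concave functions $t\mapsto t^{p/q}$ and $t\mapsto t^{q'/p'}$, concave because $\frac pq<1$ and $\frac{q'}{p'}<1$) to obtain $B\le A^{p/q}$ and $E\le D^{q'/p'}$. Feeding these into the $\huaA_q$-quantity of $\omiga^q$ and the $\huaA_p$-quantity of $\omiga^p$ and using the exponent identities $(q-1)\tfrac{q'}{p'}=\tfrac q{p'}$ and $p-1=\tfrac p{p'}=\tfrac pq\cdot\tfrac q{p'}$ gives $A\,E^{q-1}\le A\,D^{q/p'}$ and $B\,D^{p-1}\le(A\,D^{q/p'})^{p/q}$; together with part~(i) this shows $\omiga\in\huaA(p,q)$ implies both $\omiga^q\in\huaA_q$ and $\omiga^p\in\huaA_p$. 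For the converse implication the one mildly clever move is to apply discrete H\"older to the trivial identity $1=\frac1{|J|}\sum_J\omiga^{\beta}\omiga^{-\beta}$ with $\beta:=\frac{pq'}{p+q'}$ and exponents $\frac p\beta,\frac{q'}\beta>1$, which yields $1\le B^{1/p}E^{1/q'}$. Writing $A^{1/q}D^{1/p'}=(A^{1/q}E^{1/q'})(B^{1/p}D^{1/p'})(B^{1/p}E^{1/q'})^{-1}$, bounding the first factor by $\|\omiga^q\|_{\huaA_q}^{1/q}$ (since $\tfrac{q-1}q=\tfrac1{q'}$), the second by $\|\omiga^p\|_{\huaA_p}^{1/p}$ (since $\tfrac{p-1}p=\tfrac1{p'}$), the third by $1$, and then taking the supremum over $J$, one obtains $\|\omiga\|_{\huaA(p,q)}\le\|\omiga^q\|_{\huaA_q}^{1/q}\|\omiga^p\|_{\huaA_p}^{1/p}$. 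This finishes (ii).

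I expect no serious obstacle: the whole proof is short and self-contained once the exponent arithmetic is written down. The only things to watch are the bookkeeping of the two conjugations $p\leftrightarrow p'$ and $q\leftrightarrow q'$, and the direction in which each instance of Jensen's/H\"older's inequality is applied. The single genuine idea is the auxiliary lower bound $1\le B^{1/p}E^{1/q'}$ that glues the $\huaA_q$- and $\huaA_p$-conditions together in the converse half of part~(ii); everything else is definition-chasing.
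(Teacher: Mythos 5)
Your proof is correct and follows exactly the route the paper indicates but leaves unwritten (unwinding Definitions \ref{d2.6} and \ref{d2.12} and applying discrete H\"older's/Jensen's inequality); the exponent arithmetic all checks out, including the key auxiliary bound $1\le\bigl(\tfrac1{|J|}\sum_J\omega^p\bigr)^{1/p}\bigl(\tfrac1{|J|}\sum_J\omega^{-q'}\bigr)^{1/q'}$ obtained from H\"older with $\beta=\tfrac{pq'}{p+q'}$, which is needed because $q'<p'$. Since the paper omits the details entirely and merely points to the continuous analogue in \cite[Page 139]{lu}, your argument simply supplies precisely those omitted details.
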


\begin{remark}
By \cite[Page 8]{wiener20} and Remark \ref{m2.10}, we obtain the relationships among weights $\huaA(p,q)(\R_+)$, $\huaA(p,q)(\N)$ and $\huaA(p,q)(\Z)$:
\begin{equation*}\begin{array}{rcl}
\mu\in\huaA(p,q)(\R_{+})\iff\{\mu(k)\}^\infty_{k=0}\in \huaA(p,q)(\N)\Rightarrow\{\mu(\kdjdz)\}_{k\in\Z}\in\huaA(p,q)(\Z).
\end{array}\end{equation*}
\end{remark}

\section{The estimates for discrete fractional Hardy-Littlewood maximal operators \label{s3}}
\subsection{Estimates for discrete fractional Hardy-Littlewood maximal operators on discrete Morrey spaces \label{s3.1}}

\begin{definition}\cite[Definition 2.1]{bzb}
Let $0\le\alpha<1$ and $x=\{x(k)\}_{k\in\Z}\subset\R$ be a sequence. We define the {\it discrete fractional maximal operator} $\M_{\alpha}$ by
\begin{equation*}
\M_{\alpha}x(m):=\sup_{N\in \N}\frac{1}{\jssmn^{1-\alpha}}\sum_{k\in \Smn}\left|x(k)\right|,\quad m\in\Z.
\end{equation*}
\end{definition}

The boundedness of fractional Hardy-Littlewood maximal operators on discrete Morrey spaces is shown in the following Theorem \ref{t3.7}.

\begin{theorem}\label{t3.7}
Let $x=\{x(k)\}_{k\in\Z}$ be a sequence.
\begin{itemize}
\item[\rm(i)] Let $0<\alpha<1$ and $1<p\le q<\frac{1}{\alpha}$. Set $s=\frac{p}{1-\alpha q}$ and $t=\frac{qs}{p}$. Then $\M_\alpha x\in l{^s_t}$ for every $x\in l{^p_q}$ and there exists a positive constant $C$ such that
\begin{equation*}
\|\M_\alpha x\|_{{l{^s_t}}}\le C\|x\|_{{l{^p_q}}}.
\end{equation*}
\item[\rm(ii)] If $x\in l^1$, then $\M_\alpha x\in l^{{\frac{1}{1-\alpha}},weak}$ and for every $\lambda>0$, there exists a positive constant $C$ such that for every $x\in l^1$,
\begin{equation*}
\left|\{k\in \Z:\M_\alpha x(k)>\lambda\}\right|\le C\left(\frac{\|x\|_{l_1}}{\lambda}\right)^\frac{1}{1-\alpha}.
\end{equation*}
\end{itemize}
\end{theorem}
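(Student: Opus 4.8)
The plan is to deduce Theorem \ref{t3.7} from the known boundedness of the discrete Riesz potential on discrete Morrey spaces (Theorem \ref{t1.2} with $d=1$), by comparing $\M_\alpha$ pointwise with $I_\alpha$ applied to $|x|$. First I would establish the pointwise domination $\M_\alpha x(m)\le C\, I_\alpha(|x|)(m)$ for all $m\in\Z$: indeed, for any fixed $N\in\N$ the average $|S_{m,N}|^{-(1-\alpha)}\sum_{k\in S_{m,N}}|x(k)|$ is controlled, term by term, by a constant times $\sum_{k\in S_{m,N}}|x(k)|(|m-k|+1)^{\alpha-1}$, since $|m-k|+1\le 2N+1=|S_{m,N}|$ for $k\in S_{m,N}$ and $\alpha-1<0$; taking the supremum over $N$ yields the claim.

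Given that domination, part (i) is immediate: for $0<\alpha<1$ and $1<p\le q<\frac1\alpha$, Theorem \ref{t1.2} (in dimension $d=1$) gives $I_\alpha(|x|)\in l^s_t$ with $\|I_\alpha(|x|)\|_{l^s_t}\le C\||x|\|_{l^p_q}=C\|x\|_{l^p_q}$, where $s=\frac{p}{1-\alpha q}$ and $t=\frac{qs}{p}$ exactly match the exponents in the statement; since $\|\M_\alpha x\|_{l^s_t}\le C\|I_\alpha(|x|)\|_{l^s_t}$ by monotonicity of the $l^s_t$ quasi-norm under pointwise domination, we are done. (One subtlety to check: Theorem \ref{t1.2} is stated for $1<p<q$, so the endpoint $p=q$ must be handled separately — but when $p=q$ the space $l^p_q$ is just $l^p$ and the claim reduces to the $l^p\to l^s$ boundedness of $\M_\alpha$, which follows from the corresponding bound for $I_\alpha$ or can be cited directly.)

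For part (ii), the weak-type endpoint at $p=1$, the pointwise bound $\M_\alpha x\le C\, I_\alpha(|x|)$ again reduces matters to the weak-type $(1,\frac1{1-\alpha})$ bound for $I_\alpha$ on $\Z$, i.e. $|\{k:I_\alpha(|x|)(k)>\lambda\}|\le C(\|x\|_{l^1}/\lambda)^{1/(1-\alpha)}$. I would prove this directly by a Chebyshev-type argument: split $x$ as in the standard fractional-integral weak-type proof, or more simply observe that $I_\alpha(|x|)(k)=\sum_j |x(j)|(|k-j|+1)^{\alpha-1}$ and estimate the distribution set. Concretely, if $I_\alpha(|x|)(k)>\lambda$ then, bounding the kernel trivially, one gets a ball around $k$ of radius comparable to $(\|x\|_{l^1}/\lambda)^{1/(1-\alpha)}$ that must carry mass, and a covering argument caps the number of such $k$. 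Alternatively, since $\alpha<1$, the sequence $\{(|k|+1)^{\alpha-1}\}_k$ lies in $l^{r,weak}$ with $r=\frac1{1-\alpha}$, and $I_\alpha(|x|)$ is the convolution $|x|*\{(|\cdot|+1)^{\alpha-1}\}$; the weak Young inequality $l^1 * l^{r,weak}\hookrightarrow l^{r,weak}$ on $\Z$ then gives exactly the desired estimate with the constant depending on $\alpha$ only.

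The main obstacle I anticipate is purely bookkeeping rather than conceptual: one must verify that the discrete Riesz potential $I_\alpha$ used here has the kernel comparable to $(|m-k|+1)^{\alpha-1}$ (so that the pointwise domination by $\M_\alpha$ genuinely holds with a finite constant), that the exponent identities $s=\frac{p}{1-\alpha q}$, $t=\frac{qs}{p}$ in Theorem \ref{t3.7} really are the specialization $d=1$ of those in Theorem \ref{t1.2}, and that the edge case $p=q$ (excluded from Theorem \ref{t1.2}) is covered. None of these should cause real difficulty, but they are the places where care is needed; the weak Young / covering estimate for part (ii) is standard on $\Z$ and carries no surprises.
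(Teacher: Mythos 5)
Your proposal follows essentially the same route as the paper's: the pointwise domination $\M_\alpha x\le I_\alpha(|x|)$ is exactly Lemma \ref{t3.6}, part (i) then follows from the $l^p_q\to l^s_t$ boundedness of $I_\alpha$ (Lemma \ref{t3.4}, quoted from Gunawan--Schwanke, with the endpoint $p=q$ handled by the same "standard modifications" remark the paper makes in Lemma \ref{c3.5}), and part (ii) follows from the weak-type $\left(1,\frac{1}{1-\alpha}\right)$ bound for $I_\alpha$, which is Lemma \ref{c3.5}(ii). The only cosmetic difference is that the paper derives that weak-type bound from the Hedberg-type pointwise estimate $|I_\alpha x(k)|\le C(\M x(k))^{1-\alpha}\|x\|_{l^1}^{\alpha}$ combined with the weak $(1,1)$ bound for $\M$, rather than via weak Young's inequality; both you and the paper also gloss over the same small technicality that the paper's $I_\alpha$ omits the diagonal term $i=k$, so the domination at $k$ strictly needs the extra summand $|x(k)|$ to be absorbed.
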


To prove Theorem \ref{t3.7}, we need some lemmas.

The following Lemma \ref{t3.4} is the boundedness of the discrete Riesz potentials on discrete Morrey space of arbitrary dimension $d$. We only use the case of $d=1$.
\begin{lemma}\cite[Theorem 3.3]{jdszzlsm}\label{t3.4}
Let $0<\alpha<1$ and $1<p<q<\frac{1}{\alpha}$. Set $s=\frac{p}{1-\alpha q}$ and $t=\frac{qs}{p}$. Then $I_\alpha x\in l{^s_t}$ for every $x\in l{^p_q}$ and there exists a positive constant $C$ such that
\begin{equation*}
\|I_\alpha x\|_{{l{^s_t}}}\le C\|x\|_{{l{^p_q}}}.
\end{equation*}
\end{lemma}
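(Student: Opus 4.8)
The plan is to prove a discrete Adams--Hedberg pointwise inequality and then combine it with the boundedness of the discrete Hardy--Littlewood maximal operator $\M_0$ on $l^p_q$ together with a rescaling identity for Morrey norms. Throughout I use the one-dimensional discrete Riesz potential $I_\alpha x(m)=\sum_{k\in\Z,\,k\neq m}|m-k|^{\alpha-1}x(k)$, whose absolute convergence for $x\in l^p_q$ follows from the tail estimate below (cf.\ Proposition \ref{l3.10}). Note that $q<\frac1\alpha$ yields $\alpha-\frac1q<0$ and $1-\alpha q\in(0,1)$; these two facts drive the entire argument.

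First I would establish, for every $m\in\Z$, the pointwise bound
\begin{equation}\label{hed}
|I_\alpha x(m)|\lesssim \|x\|_{l^p_q}^{\alpha q}\,\big(\M_0 x(m)\big)^{1-\alpha q}.
\end{equation}
Fix a dyadic integer $R=2^{\ell}\ge1$ and split the defining sum into a near part $0<|m-k|\le R$ and a far part $|m-k|>R$. For the near part, group $k$ according to $2^{i}\le|m-k|<2^{i+1}$ ($0\le i\le\ell$); using $|m-k|^{\alpha-1}\le 2^{i(\alpha-1)}$ and $\sum_{k\in S_{m,2^{i+1}}}|x(k)|\le|S_{m,2^{i+1}}|\,\M_0x(m)\lesssim 2^{i}\M_0x(m)$, the geometric series in $i$ (here $\alpha>0$ is used) sums to
\[
\sum_{0<|m-k|\le R}|m-k|^{\alpha-1}|x(k)|\lesssim R^{\alpha}\,\M_0x(m).
\]
For the far part, group $k$ according to $2^{i}R<|m-k|\le2^{i+1}R$ ($i\ge0$) and apply the discrete H\"older inequality with exponent $p$ on $S_{m,2^{i+1}R}$, bounding $\sum_{k\in S_{m,2^{i+1}R}}|x(k)|$ by $|S_{m,2^{i+1}R}|^{1-1/q}\|x\|_{l^p_q}\lesssim(2^iR)^{1-1/q}\|x\|_{l^p_q}$; thus each block is $\lesssim(2^iR)^{\alpha-1/q}\|x\|_{l^p_q}$, and since $\alpha-\frac1q<0$ the geometric series converges to $\lesssim R^{\alpha-1/q}\|x\|_{l^p_q}$. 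Adding the pieces gives $|I_\alpha x(m)|\lesssim R^{\alpha}\M_0x(m)+R^{\alpha-1/q}\|x\|_{l^p_q}$; choosing $R$ to be the dyadic integer nearest $(\|x\|_{l^p_q}/\M_0x(m))^{q}$ balances the terms and yields \eqref{hed} (here $\M_0x(m)\le\|x\|_{l^p_q}$ ensures the optimal radius is $\ge1$, and the cases $\M_0x(m)\in\{0,\infty\}$ are trivial).

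Next I would pass to the $l^s_t$ norm. Raising \eqref{hed} to the power $s$, summing over a block $S_{m,N}$, and taking the $s$-th root gives
\[
\|I_\alpha x\|_{l^s_t}\lesssim\|x\|_{l^p_q}^{\alpha q}\,\big\|(\M_0x)^{1-\alpha q}\big\|_{l^s_t}.
\]
The key algebraic check is the Morrey rescaling identity $\big\|(\M_0x)^{1-\alpha q}\big\|_{l^s_t}=\|\M_0x\|_{l^p_q}^{1-\alpha q}$: indeed $s(1-\alpha q)=p$ and $t(1-\alpha q)=q$ by the definitions $s=\frac{p}{1-\alpha q}$ and $t=\frac{qs}{p}$, while the cardinality exponents match because $\big(\frac1q-\frac1p\big)(1-\alpha q)=\frac{(p-q)(1-\alpha q)}{pq}=\frac1t-\frac1s$. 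Finally, invoking the boundedness of $\M_0$ on $l^p_q$ (the $\alpha=0$ endpoint, valid for $1<p\le q<\infty$) gives $\|\M_0x\|_{l^p_q}\lesssim\|x\|_{l^p_q}$, whence
\[
\|I_\alpha x\|_{l^s_t}\lesssim\|x\|_{l^p_q}^{\alpha q}\,\|x\|_{l^p_q}^{1-\alpha q}=\|x\|_{l^p_q}.
\]

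I expect the main obstacle to be the genuinely discrete features of the optimization: unlike on $\R$ one cannot select the balancing radius continuously, so one must round $(\|x\|_{l^p_q}/\M_0x(m))^{q}$ to a dyadic integer in $[1,\infty)$ and argue the rounding costs only a constant, while separately handling the degenerate values of $\M_0x(m)$. A secondary point is securing the input $\|\M_0x\|_{l^p_q}\lesssim\|x\|_{l^p_q}$; this is the $\alpha=0$ analogue of the maximal bounds appearing elsewhere in the paper and can be cited from \cite{lsmyxz18,jdszzlsm} or proved by the same near/far splitting, which in the discrete setting is in fact easier than on $\R$ since the kernel carries no near-diagonal singularity once $k\neq m$.
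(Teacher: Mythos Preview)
Your argument is correct and is precisely the Adams--Hedberg route that underlies the cited result. The present paper does not give its own proof of this lemma (it is quoted verbatim from \cite[Theorem~3.3]{jdszzlsm}), but its Lemma~\ref{c3.5}(i) explicitly records the pointwise inequality $|I_\alpha x(k)|\le C(\M x(k))^{1-\alpha q}\|x\|_{l^p_q}^{\alpha q}$ as being extracted from the Gunawan--Schwanke proof, and combining that with the $l^p_q$--boundedness of $\M=\M_0$ (also from \cite{jdszzlsm}) via the rescaling $s(1-\alpha q)=p$, $t(1-\alpha q)=q$ is exactly what you do. One small slip: for the absolute convergence of the series you meant to cite Proposition~\ref{l3.15}, not Proposition~\ref{l3.10}.
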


\begin{lemma}\label{t3.5}\cite[Theorem 1]{lsHardy22}
Let $x=\{x(k)\}_{k\in\Z}$ be a sequence.
\begin{itemize}
\item[\rm(i)] If $x\in l^1$, then $\M x\in l^{1,weak}$ and for every $\lambda>0$,
\begin{equation*}
\left|\{k: \M x(k)>\lambda\}\right|\le \frac{C}{\lambda}\|x\|_{l_1},
\end{equation*}
where C is a positive constant independent of $\alpha$ and $b$.
\item[\rm(ii)] If $x\in l^p$, $1<p\le \infty$, then $\M x\in l^p$ and
\begin{equation*}
\|\M x\|_{l^p}\le C_p\|x\|_{l^p},
\end{equation*}
where $C_p$ is a positive constant depending on p.
\end{itemize}
\end{lemma}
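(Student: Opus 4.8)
The plan is to recognize statements (i)--(ii) as the discrete Hardy--Littlewood maximal theorem for the centered operator
\begin{equation*}
\M x(m)=\sup_{N\in\N}\frac{1}{\left|S_{m,N}\right|}\sum_{k\in S_{m,N}}|x(k)|,
\end{equation*}
and to reproduce the classical Hardy--Littlewood--Wiener argument with Lebesgue measure replaced by the counting measure on $\Z$. First I would establish the weak type $(1,1)$ bound in (i) by a one-dimensional discrete Vitali covering lemma; then I would upgrade it to the strong type $(p,p)$ bound in (ii) for $1<p<\fz$ by interpolating against the trivial bound $\|\M x\|_{l^\fz}\le\|x\|_{l^\fz}$.

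For (i), fix $\lambda>0$ and set $E_\lambda:=\{k\in\Z:\M x(k)>\lambda\}$. Since $\M x(k)>\lambda$, there is a radius $N_k\in\N$ with $\left|S_{k,N_k}\right|<\lambda^{-1}\sum_{j\in S_{k,N_k}}|x(j)|\le\lambda^{-1}\|x\|_{l^1}$. The geometric heart of the argument is a discrete covering lemma: from any finite subfamily of $\{S_{k,N_k}\}_{k\in E_\lambda}$ one extracts greedily, by decreasing radius, a pairwise disjoint subcollection $S_{k_1,N_{k_1}},\dots,S_{k_J,N_{k_J}}$ whose concentric threefold dilates $S_{k_i,3N_{k_i}}$ cover the subfamily. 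Indeed, in $\Z$ two intersecting symmetric intervals with $N'\le N$ satisfy $S_{k',N'}\subset S_{k,3N}$, because then $|k-k'|\le N+N'\le 2N$ forces $|j-k|\le N'+2N\le 3N$ for every $j\in S_{k',N'}$; moreover $\left|S_{k,3N}\right|=6N+1\le 3\left|S_{k,N}\right|$. Consequently, for every finite $F\subset E_\lambda$, using $F\subset\bigcup_{k\in F}S_{k,N_k}$, the covering, and the disjointness,
\begin{equation*}
|F|\le\sum_{i=1}^J\left|S_{k_i,3N_{k_i}}\right|\le 3\sum_{i=1}^J\left|S_{k_i,N_{k_i}}\right|<\frac{3}{\lambda}\sum_{i=1}^J\sum_{j\in S_{k_i,N_{k_i}}}|x(j)|\le\frac{3}{\lambda}\|x\|_{l^1}.
\end{equation*}
Taking the supremum over all finite $F\subset E_\lambda$ gives $|E_\lambda|\le 3\lambda^{-1}\|x\|_{l^1}$ (in particular $E_\lambda$ is finite), which is (i) with the absolute constant $C=3$.

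For (ii), the endpoint $p=\fz$ is immediate, since each average of $|x|$ is at most $\|x\|_{l^\fz}$, whence $\|\M x\|_{l^\fz}\le\|x\|_{l^\fz}$. As $\M$ is sublinear and satisfies both the weak $(1,1)$ bound of (i) and the strong $(\fz,\fz)$ bound, the Marcinkiewicz interpolation theorem---valid verbatim over the measure space $(\Z,\text{counting measure})$---yields $\|\M x\|_{l^p}\le C_p\|x\|_{l^p}$ for $1<p<\fz$, with $C_p\to\fz$ as $p\to 1$. Equivalently one may argue directly by the distribution-function method: decompose $x=x\,\kaf_{\{|x|>\lambda/2\}}+x\,\kaf_{\{|x|\le\lambda/2\}}$, observe that $\M(x\,\kaf_{\{|x|\le\lambda/2\}})\le\lambda/2$ pointwise so that $\{\M x>\lambda\}\subset\{\M(x\,\kaf_{\{|x|>\lambda/2\}})>\lambda/2\}$, apply (i) to the first piece, and integrate the identity $\sum_{k}(\M x(k))^p=p\int_0^\fz\lambda^{p-1}|\{\M x>\lambda\}|\,d\lambda$.

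The only genuinely non-routine ingredient is the discrete covering lemma, and even that is easy in dimension one, precisely because intersecting symmetric intervals are automatically nested inside a bounded concentric dilate; everything else (the $l^\fz$ estimate and interpolation over counting measure) is formally identical to the continuous theory. I would therefore expect the main obstacle to be merely the cardinality bookkeeping $\left|S_{k,3N}\right|\le 3\left|S_{k,N}\right|$ and the careful passage from finite subfamilies to the full level set $E_\lambda$, both of which are handled above.
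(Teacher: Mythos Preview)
Your argument is correct. The paper does not supply its own proof of this lemma: it is quoted verbatim as \cite[Theorem 1]{lsHardy22} and used as a black box. What you have written is the standard Hardy--Littlewood--Wiener proof transported to $(\Z,\text{counting measure})$---a discrete Vitali covering for the weak $(1,1)$ bound, the trivial $l^\infty$ estimate, and Marcinkiewicz interpolation for the intermediate exponents---and every step checks out, including the cardinality estimate $|S_{k,3N}|=6N+1\le 3(2N+1)$ and the nesting $S_{k',N'}\subset S_{k,3N}$ when the intervals meet and $N'\le N$. Your proof therefore supplies a self-contained substitute for the citation; there is nothing in the paper to compare it against beyond noting that the authors chose to invoke the literature rather than reproduce this classical argument.
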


By Lemmas \ref{t3.4} and \ref{t3.5}, we can obtain the following Lemma \ref{c3.5}.

\begin{lemma}\label{c3.5}
Let $0<\alpha<1$, $1\le p\le q<\frac{1}{\alpha}$ and $x=\{x(k)\}_{k\in\Z}$ be a sequence.
\begin{itemize}
\item[\rm(i)] If $x\in l^p_q$, then there exists a positive constant $C$ such that
           \begin{equation*}
           \left|I_\alpha x(k)\right|\le C(\M x(k))^{1-\alpha q}\|x\|{^{\alpha q}_{l{^p_q}}}.
           \end{equation*}
\item[\rm(ii)] If $x\in l^1$, then $I_\alpha x\in l^{{\frac{1}{1-\alpha}},weak}$ and for every $\lambda>0$, there exists a positive constant $C$ such that for every $x\in l^1$,
\begin{equation*}
\left|\{k\in \Z: \left|I_\alpha x(k)\right|>\lambda\}\right|\le C\left(\frac{\|x\|_{l_1}}{\lambda}\right)^\frac{1}{1-\alpha}.
\end{equation*}
\end{itemize}
\end{lemma}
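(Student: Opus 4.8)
The plan is to derive both parts from the pointwise comparison between $I_\alpha$ and the (non-fractional) Hardy--Littlewood maximal operator $\M$, exploiting a standard splitting of the sum defining $I_\alpha x(k)$ into a ``near'' part and a ``far'' part. Recall that $I_\alpha x(k) = \sum_{j\ne k} \frac{x(j)}{|k-j|^{1-\alpha}}$ (up to normalization); for part (i), fix $k$ and a threshold radius $R\in\N$ to be chosen, and write the sum over $\{j: |k-j|\le R\}$ and over $\{j:|k-j|>R\}$ separately. The near part is dominated, after grouping the dyadic-type shells $2^{-l}R < |k-j| \le 2^{-l+1}R$, by a constant times $R^\alpha \M x(k)$, summing the geometric series in $\alpha>0$. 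For the far part, I would group the shells $2^l R < |k-j|\le 2^{l+1}R$ and bound $\sum_{|k-j|\le 2^{l+1}R}|x(j)|$ using the discrete Morrey norm: since $2\{k\}$-type balls $S_{k,2^{l+1}R}$ have cardinality comparable to $2^{l+1}R$, one gets $\sum_{k\in S_{m,N}}|x(k)|^p \le \|x\|_{l^p_q}^p |S_{m,N}|^{1-p/q}$ by definition, and then discrete Hölder's inequality converts this into a bound on $\sum |x(j)|$ over the shell of order $\|x\|_{l^p_q} (2^{l+1}R)^{1 - 1/q + \text{(power)}}$. Collecting the far shells, the series converges precisely because $1<q<1/\alpha$ forces the relevant exponent of $2^l$ to be negative, and the far part is controlled by $C R^{\alpha - 1/q}\|x\|_{l^p_q}$. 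Choosing $R$ so that the two bounds balance, i.e. $R^\alpha \M x(k) \sim R^{\alpha - 1/q}\|x\|_{l^p_q}$, which means $R \sim (\|x\|_{l^p_q}/\M x(k))^q$, and substituting back yields $|I_\alpha x(k)| \le C (\M x(k))^{1-\alpha q}\|x\|_{l^p_q}^{\alpha q}$, as claimed. (One should also handle the degenerate cases $\M x(k)=0$ or $\|x\|_{l^p_q}=\infty$, which are trivial.)

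For part (ii), the argument is the classical Hedberg-type bound applied with $p=q=1$. Running the same near/far decomposition with $x\in l^1$: the near part is again $\le C R^\alpha \M x(k)$, while the far part is now simply $\sum_{|k-j|>R}\frac{|x(j)|}{|k-j|^{1-\alpha}} \le R^{\alpha-1}\|x\|_{l^1}$ by pulling out the smallest value of $|k-j|^{\alpha-1}$. Balancing $R^\alpha \M x(k) \sim R^{\alpha-1}\|x\|_{l^1}$ gives $R \sim \|x\|_{l^1}/\M x(k)$ and hence the pointwise estimate $|I_\alpha x(k)| \le C (\M x(k))^{1-\alpha}\|x\|_{l^1}^\alpha$. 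Consequently $\{k: |I_\alpha x(k)| > \lambda\} \subset \{k: C(\M x(k))^{1-\alpha}\|x\|_{l^1}^\alpha > \lambda\} = \{k: \M x(k) > (C^{-1}\lambda \|x\|_{l^1}^{-\alpha})^{1/(1-\alpha)}\}$, and applying the weak $(1,1)$ bound for $\M$ from Lemma \ref{t3.5}(i) to this level set gives
\begin{equation*}
\left|\{k\in\Z: |I_\alpha x(k)|>\lambda\}\right| \le \frac{C\|x\|_{l^1}}{\left(C^{-1}\lambda\|x\|_{l^1}^{-\alpha}\right)^{1/(1-\alpha)}} = C\left(\frac{\|x\|_{l^1}}{\lambda}\right)^{\frac{1}{1-\alpha}},
\end{equation*}
which is exactly the asserted inequality.

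I expect the main obstacle to be the bookkeeping in the far-part estimate of (i): one must correctly invoke the discrete Morrey norm on balls $S_{m,N}$ whose cardinality is $2N+1$ rather than exactly $2N$, track the exponent $1 - 1/q$ coming from Hölder together with the Morrey scaling exponent $1/q - 1/p$ (times $p$), and verify that the resulting power of $2^l$ in the shell sum is strictly negative under the hypothesis $1<p\le q<1/\alpha$ so that the geometric series converges. The convergence of $I_\alpha x(k)$ itself, needed to make the manipulations legitimate, is available from the basic properties established in Section \ref{s4.0} (Proposition \ref{l3.10}); everything else is a routine optimization in the free radius $R$.
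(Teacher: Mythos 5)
Your proposal is correct, and in substance it is the same argument the paper relies on; the difference is one of packaging. For part (i) the paper does not give a proof at all: it cites \cite[Theorem 3.3]{jdszzlsm}, whose proof is exactly the Hedberg-type near/far splitting you carry out, and then asserts that the endpoint cases $p=1$ and $p=q$ follow ``by checking the proof \dots with standard modifications.'' You supply those modifications explicitly, which is a genuine gain: your shell estimates use only $p\ge 1$ and $\alpha-\frac1q<0$, so the cases $p=1$ and $p=q$ are covered on the same footing, and the exponent bookkeeping ($(\alpha-1)+(1-\frac1q)=\alpha-\frac1q$ on the far shells, $\alpha$ on the near shells) is right. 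Part (ii) is identical to the paper's proof: apply (i) with $p=q=1$ to get $|I_\alpha x(k)|\le C\|x\|_{l^1}^{\alpha}(\M x(k))^{1-\alpha}$ and then invoke the weak $(1,1)$ bound for $\M$ from Lemma \ref{t3.5}(i). The only detail worth adding to your write-up is the legitimacy of the balancing radius: since $\frac{1}{|S_{k,N}|}\sum_{j\in S_{k,N}}|x(j)|\le |S_{k,N}|^{-1/q}\|x\|_{l^p_q}\le\|x\|_{l^p_q}$ for every $N$, one has $\M x(k)\le\|x\|_{l^p_q}$, so $R\sim(\|x\|_{l^p_q}/\M x(k))^{q}\ge 1$ and an integer radius can indeed be chosen (at the cost of an absolute constant).
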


\begin{proof}
By \cite[Theorem 3.3]{jdszzlsm} we know that $\rm(i)$ holds true for $0<\alpha<1$ and $1<p<q<\frac{1}{\alpha}$. Actually, $\rm(i)$ is also true for $0<\alpha<1$ and $1\le p\le q<\frac{1}{\alpha}$ by checking the proof of \cite[Theorem 3.3]{jdszzlsm} with standard modifications. From $\rm(i)$ with $p=q=1$ and Lemma \ref{t3.5}$\rm(i)$, it follows that
\begin{equation*}
\begin{aligned}
\left|\left\{k\in \Z:\left|I_\alpha x(k)\right|>\lambda\right\}\right|&\le \left|\left\{k\in \Z :C\|x\|_{l_1}^\alpha(\M x(k))^{1-\alpha}>\lambda\right\}\right|\\
&=\left|\left\{k\in \Z:\M x(k)>\left(\frac{\lambda}{C\|x\|_{l_1}^\alpha}\right)^{\frac{1}{1-\alpha}}\right\}\right|\\
&\le C\left(\frac{\|x\|{^\alpha_{l_1}}}{\lambda}\right)^\frac{1}{1-\alpha}\|x\|_{l_1}\\
&= C\left(\frac{\|x\|_{l_1}}{\lambda}\right)^\frac{1}{1-\alpha}.
\end{aligned}
\end{equation*}
We finish the proof of Lemma \ref{c3.5}.
\end{proof}

The following lemma illustrates that discrete fractional Hardy-Littlewood maximal operator $\M_\alpha$ can be dominated by discrete Riesz potentials $I_\alpha$.
\begin{lemma}\label{t3.6}
Let $0<\alpha<1$ and $x=\{x(k)\}_{k\in\Z}$ be a sequence. Then $\M_\alpha x(k)\le I_\alpha(\left|x\right|)(k)$ for every $k\in\Z$.
\end{lemma}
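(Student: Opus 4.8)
The plan is to prove the pointwise bound one scale at a time: for each fixed $N\in\N$ I compare the $N$-th term in the supremum defining $\M_\alpha x(m)$ with a partial sum of $I_\alpha(\left|x\right|)(m)$, and then let $N$ run. Fix $m\in\Z$ and write $I_\alpha(\left|x\right|)(m)=\sum_{k\in\Z}g(\left|m-k\right|)\left|x(k)\right|$, where $g$ is the (positive, nonincreasing) kernel of $I_\alpha$ from Section~\ref{s4.0}; for $d=1$ and $0<\alpha<1$ one has $g(j)=(1+j)^{\alpha-1}$ (equivalently $\max\{1,j\}^{\alpha-1}$, according to the normalization used), so in particular $g(0)=1$. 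The single quantitative input needed is the elementary comparison
\[
g(\left|m-k\right|)\ \ge\ (2N+1)^{\alpha-1}\ =\ \left|S_{m,N}\right|^{\alpha-1}\qquad\text{whenever } k\in S_{m,N},
\]
which holds because $k\in S_{m,N}$ forces $\left|m-k\right|\le N$, hence $1+\left|m-k\right|\le 2N+1$, and $t\mapsto t^{1-\alpha}$ is increasing on $[0,\infty)$ since $0<1-\alpha<1$.

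Granting this, the estimate is immediate. All terms of $I_\alpha(\left|x\right|)(m)$ are nonnegative, so discarding those with $k\notin S_{m,N}$ and then invoking the displayed inequality on the survivors gives
\[
I_\alpha(\left|x\right|)(m)\ \ge\ \sum_{k\in S_{m,N}}g(\left|m-k\right|)\left|x(k)\right|\ \ge\ \frac{1}{\left|S_{m,N}\right|^{1-\alpha}}\sum_{k\in S_{m,N}}\left|x(k)\right|.
\]
The right-hand side is exactly the $N$-th entry of the supremum defining $\M_\alpha x(m)$; since $N\in\N$ was arbitrary, taking the supremum over $N$ yields $\M_\alpha x(m)\le I_\alpha(\left|x\right|)(m)$. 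No growth or summability assumption on $x$ is needed: the chain of inequalities is valid in $[0,\infty]$, and is vacuous when $I_\alpha(\left|x\right|)(m)=\infty$.

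The only subtle point — and in fact the whole reason the inequality holds with no correction term — is the contribution of the centre $k=m$, i.e.\ the case $N=0$: there $S_{m,0}=\{m\}$, $\left|S_{m,0}\right|=1$, and the asserted bound collapses to $\left|x(m)\right|\le I_\alpha(\left|x\right|)(m)$, which is true precisely because the diagonal term of $I_\alpha(\left|x\right|)(m)$ equals $g(0)\left|x(m)\right|=\left|x(m)\right|$. This is exactly why the kernel of $I_\alpha$ carries the normalization $1+\left|m-k\right|$ (or $\max\{1,\left|m-k\right|\}$) rather than the bare $\left|m-k\right|$, which would be singular at the diagonal and would leave the $N=0$ slice — hence the maximal function at its most concentrated scale — uncontrolled. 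Once this is in place, the argument uses nothing beyond monotonicity of $t\mapsto t^{1-\alpha}$ and the trivial inequality $1+\left|m-k\right|\le\left|S_{m,N}\right|$ on $S_{m,N}$, so there is no genuine obstacle to overcome.
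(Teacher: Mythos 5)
Your off-diagonal estimate is exactly the paper's: restrict the (nonnegative) series for $I_\alpha(\left|x\right|)(m)$ to a window around $m$, bound the kernel from below by $\left|S_{m,N}\right|^{\alpha-1}$ on that window, and take the supremum over scales. The problem is the diagonal. You assert that the kernel of $I_\alpha$ is $g(j)=(1+j)^{\alpha-1}$ (or $\max\{1,j\}^{\alpha-1}$) with $g(0)=1$, and you correctly identify the $N=0$ slice, $\left|x(m)\right|\le I_\alpha(\left|x\right|)(m)$, as the point where everything hinges. But Definition \ref{d2.17} of this paper sets $I_\alpha x(k)=\sum_{i\in\Z\setminus\{k\}}x(i)\left|k-i\right|^{\alpha-1}$: the sum omits $i=k$ entirely and the kernel is the bare power $\left|k-i\right|^{\alpha-1}$, not your normalized one. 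There is no diagonal term, so $g(0)$ is not $1$. With that definition your $N=0$ step fails outright: take $x$ supported at the single point $m$, so that $\M_\alpha x(m)\ge\left|x(m)\right|>0$ while $I_\alpha(\left|x\right|)(m)=0$. More generally, for each $N$ the contribution $\left|x(m)\right|(2N+1)^{\alpha-1}$ of the centre to the maximal average is matched by nothing on the right-hand side. The ``subtle point'' you flag is therefore not settled by the normalization you chose; relative to the definition actually in force it is a genuine gap.

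For what it is worth, the paper's own proof of Lemma \ref{t3.6} has the same blind spot: it enlarges the window to $S_{k,N+1}$ so that every off-diagonal $i$ there satisfies $1\le\left|k-i\right|\le N+1$, but then silently reinstates the $i=k$ term when it passes to $\sum_{i\in S_{k,N}}\left|x(i)\right|$ in the final display. The pointwise inequality $\M_\alpha x(k)\le I_\alpha(\left|x\right|)(k)$ is simply false for Definition \ref{d2.17} as written, and becomes true only if one either adds the diagonal term to $I_\alpha$ with weight $1$ (your convention) or removes the $N=0$ scale and the centre term from the averages defining $\M_\alpha$. If you intend your argument as a proof of the lemma as it is used later in the paper, you must state the change of kernel explicitly or supply the diagonal correction; as it stands, the premise $g(0)=1$ contradicts the stated definition and the chain of inequalities cannot be closed without modifying one of the two operators.
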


\begin{proof}
For fixed $k\in \Z$ and any $N\in\N$, we have
\begin{equation}\label{eq3.6}
\begin{aligned}
I_\alpha(\left|x\right|)(k)&=\sum\limits_{i\in\Z\xieg\{k\}}\frac{\left|x(i)\right|}{\left|k-i\right|^{1-\alpha}}\ge \sum\limits_{i\in S_{k,N+1}}\frac{\left|x(i)\right|}{\left|k-i\right|^{1-\alpha}}\ge \frac{1}{(N+1)^{1-\alpha}}\sum\limits_{i\in S_{k,N+1}}\left|x(i)\right|\\
&\ge \frac{1}{(2N+1)^{1-\alpha}}\sum\limits_{i\in S_{k,N+1}}\left|x(i)\right|.
\end{aligned}\end{equation}
Taking the supremum for any $N\in\N$ on both sides of (\ref{eq3.6}), we obtain
\begin{equation*}
I_\alpha(\left|x\right|)(k)\ge\sup\limits_{N\in\N}\frac{1}{(2N+1)^{1-\alpha}}\sum\limits_{i\in S_{k,N}}\left|x(i)\right|=\M_\alpha x(k).
\end{equation*}
We finish the proof of Lemma \ref{t3.6}.
\end{proof}

\begin{proof}[Proof of Theorem \ref{t3.7}]
By Lemmas \ref{t3.6}, \ref{t3.4} and \ref{c3.5}$\rm(ii)$, we immediately obtain Theorem \ref{t3.7}.
\end{proof}

\subsection{Estimates for discrete fractional Hardy-Littlewood maximal operators on discrete weighted Lebesgue spaces \label{s3.2}}
To discuss the weighted estimates of the discrete Riesz potential $I_\alpha$, we need to consider the weighted estimate of the discrete fractional Hardy-Littlewood maximal operator $\M_\alpha$.

\begin{theorem}\label{t3.1}
Let $0<\alpha<1$, $1<p<\frac{1}{\alpha}$ and $\frac{1}{q}=\frac{1}{p}-\alpha$. If $\omiga\in \huaA(p,q)$ and $x\in l^p_{\omiga^p}$, then there exists a positive constant $C$ such that $\M_\alpha x\in l^q_{\omiga^q}$ and $\|\M_\alpha x\|_{l^q_{\omiga^q}}\le C\|x\|_{l^p_{\omiga^p}}$.
\end{theorem}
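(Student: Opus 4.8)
The plan is to follow the classical Muckenhoupt--Wheeden route \cite{1974m} for the weighted boundedness of the fractional maximal operator, transcribed to the discrete setting, combining it with the discrete unweighted $l^p$--$l^q$ bound for $\M_\alpha$ (essentially the content of Theorem \ref{t3.7}(i) with $p=q$ replaced appropriately, or Lemma \ref{t3.5} together with the pointwise domination tricks already in hand). First I would record the two pieces of weight information supplied by $\omiga\in\huaA(p,q)$: by Proposition \ref{m2.13}(ii), $\omiga^q\in\huaA_q$ and $\omiga^p\in\huaA_p$, and moreover $\huaA_p$-weights satisfy a reverse H\"older inequality, so both lie in $\huaA_\infty$ and Proposition \ref{m2.11} is available. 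The heart of the argument is the pointwise estimate: for $x\in l^p_{\omiga^p}$ and any $m\in\Z$, $N\in\N$,
\begin{equation*}
\frac{1}{|S_{m,N}|^{1-\alpha}}\sum_{k\in S_{m,N}}|x(k)|
\le C\,\|\omiga\|_{\huaA(p,q)}\,|S_{m,N}|^{-1/q}\left(\frac{1}{\omiga^p(S_{m,N})}\sum_{k\in S_{m,N}}|x(k)|^p\,\omiga(k)^p\right)^{1/p},
\end{equation*}
which is obtained by applying discrete H\"older's inequality with exponents $p$ and $p'$ to $\sum_{k}|x(k)| = \sum_k |x(k)|\omiga(k)\cdot\omiga(k)^{-1}$, then invoking the $\huaA(p,q)$ condition to control the factor $\big(\sum_{S_{m,N}}\omiga^{-p'}\big)^{1/p'}$ by $|S_{m,N}|^{1+1/p'}\,\|\omiga\|_{\huaA(p,q)}\,\big(|S_{m,N}|^{-1}\sum_{S_{m,N}}\omiga^q\big)^{-1/q}$, and finally bookkeeping the powers of $|S_{m,N}|$ using $\frac1q=\frac1p-\alpha$.

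Taking the supremum over $N$, this gives the pointwise bound
\begin{equation*}
\M_\alpha x(m)\le C\,\|\omiga\|_{\huaA(p,q)}\,\big(M_{\omiga^p}(|x|^p\,\omiga^p\,\omiga^{-p})(m)\big)^{?}\cdots,
\end{equation*}
but cleaner is to recognize the right-hand side as $C\,\|\omiga\|_{\huaA(p,q)}\,(\M_\beta^{\,\nu} y)(m)^{1/p}$ for a suitable auxiliary (weighted) fractional maximal function applied to $y:=|x|^p\omiga^p$ with the measure $\nu:=\omiga^p$; alternatively one reduces directly to the centered weighted maximal operator. In the standard treatment one now writes, for $\lambda>0$ and the level set $E_\lambda:=\{m:\M_\alpha x(m)>\lambda\}$, a Vitali-type covering of $E_\lambda$ by sets $S_{m_j,N_j}$ on each of which $|S_{m_j,N_j}|^{-1+\alpha}\sum_{S_{m_j,N_j}}|x|>\lambda$; the discrete Vitali covering lemma (available since the $S_{m,N}$ are discrete intervals, exactly as used in the proof of Lemma \ref{t3.5}) extracts a disjoint subfamily. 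Then $\omiga^q(E_\lambda)\le C\sum_j\omiga^q(S_{m_j,N_j})$, and on each selected interval one uses the $\huaA(p,q)$ inequality in the form relating $\omiga^q(S_{m_j,N_j})$ to $\big(\sum_{S_{m_j,N_j}}|x|\big)$-powers and $\omiga^p(S_{m_j,N_j})$, producing a good-$\lambda$ / distributional bound that integrates (sums) in $\lambda$ to $\|x\|_{l^p_{\omiga^p}}^p$. Marcinkiewicz-type interpolation between the resulting weak-type endpoints, or a direct summation-by-parts on the distribution function, then yields the strong $(p,q)$ bound $\|\M_\alpha x\|_{l^q_{\omiga^q}}\le C\|x\|_{l^p_{\omiga^p}}$.

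An arguably shorter alternative, which I would present if it goes through cleanly, is to bypass the covering lemma entirely: prove the pointwise inequality $\M_\alpha x(m)\le C\,\|\omiga\|_{\huaA(p,q)}^{1/p}\,\big(M(|x|^p\omiga^p)(m)\big)^{1-\alpha p}\cdot(\text{const})$ after normalizing, then apply the unweighted maximal theorem (Lemma \ref{t3.5}(ii)) and the $\huaA_\infty$ property of $\omiga^q$ — but the exponents do not match up directly, so one really needs the two-weight structure $\omiga^q\in\huaA_q$, $\omiga^p\in\huaA_p$ to convert the $l^q(\omiga^q)$ norm of a power of $M(|x|^p\omiga^p)$ back into $\|x\|_{l^p_{\omiga^p}}$ via the classical $\huaA_q$-boundedness of $M$. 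The main obstacle, as flagged in the introduction, is that Muckenhoupt's Whitney-decomposition argument does not transfer verbatim to $\Z$; here, however, the maximal (rather than the Riesz potential) case is more forgiving because the geometry needed is only that of discrete intervals $S_{m,N}$, for which a discrete Vitali covering lemma suffices, so I expect the real care to go into (a) the exponent arithmetic tying $\frac1q=\frac1p-\alpha$ to the powers of $|S_{m,N}|$ in the pointwise estimate, and (b) justifying the covering/disjointification step with the correct constants depending only on $\|\omiga\|_{\huaA(p,q)}$ and $\alpha$, $p$.
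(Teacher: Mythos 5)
Your overall route coincides with the paper's: establish a weak-type $(p,q)$ estimate for $\M_\alpha$ with respect to $(\omiga^p,\omiga^q)$ by covering the level set $\{k:\M_\alpha x(k)>\lambda\}$ with discrete intervals $S_{m_j,N_j}$ on which the fractional average exceeds $\lambda$, disjointify (the paper uses a Besicovitch overlapping theorem, you propose Vitali; either works for discrete intervals), apply H\"older and the $\huaA(p,q)$ condition on each selected interval, and sum using $q/p>1$. This is exactly Lemma \ref{t3.8}. Two bookkeeping slips in your "heart" inequality: the correct right-hand side is $\|\omiga\|_{\huaA(p,q)}\big(\sum_{S}|x|^p\omiga^p\big)^{1/p}\big(\omiga^q(S)\big)^{-1/q}$, not $|S|^{-1/q}\big(\omiga^p(S)^{-1}\sum_S|x|^p\omiga^p\big)^{1/p}$, and the factor controlling $\big(\sum_S\omiga^{-p'}\big)^{1/p'}$ is $|S|^{1/p'+1/q}\big(\sum_S\omiga^q\big)^{-1/q}$, not $|S|^{1+1/p'}(\cdots)$; the exponents cancel precisely because $\alpha-1+\frac{1}{p'}+\frac{1}{q}=0$.

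The genuine gap is the final step. A single weak-type bound at the target pair $(p,q)$ does not self-improve: ``direct summation-by-parts on the distribution function'' gives $\|\M_\alpha x\|^q_{l^q_{\omiga^q}}\le Cq\int_0^\infty\lambda^{-1}\,d\lambda\,\|x\|^q_{l^p_{\omiga^p}}$, which diverges, and Marcinkiewicz interpolation (Lemma \ref{l3.17}) requires two distinct endpoints with $q_0\neq q_1$. Your argument produces only the one endpoint at $(p,q)$ and never explains where a second comes from; the phrase ``the resulting weak-type endpoints'' papers over this. The paper supplies the missing endpoint via the openness of the Muckenhoupt classes (Lemma \ref{l3.1}): since $\omiga\in\huaA(p,q)$ iff $\omiga^q\in\huaA_{1+q/p'}$ (Proposition \ref{m2.13}), there is $\epsilon>0$ with $\omiga^q\in\huaA_{1+q/p'-\epsilon}$, equivalently $\omiga\in\huaA(p_0,q_0)$ for some $p_0<p$ with $\frac{1}{q_0}=\frac{1}{p_0}-\alpha$; running the covering argument at $(p_0,q_0)$, and at a pair $(p_1,q_1)$ with $p_1>p$ (where $\huaA(p,q)\subset\huaA(p_1,q_1)$ by H\"older), yields the two weak endpoints that interpolate to the strong $(p,q)$ bound. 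You invoke reverse H\"older only to place $\omiga^q$ in $\huaA_\infty$; you need it in the form of Lemma \ref{l3.1} to manufacture the second endpoint. Without that, the proof is incomplete.
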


In order to prove Theorem \ref{t3.1}, we need some lemmas for preparation. %Combining the methods of Lu \cite{lu} and Pierce \cite{2009p} in this section, we can obtain the weighted weak type $(p,q)$ boundedness of the discrete fractional maximal operator $\M_\alpha$.

\begin{lemma}\label{t3.8}
Let $0<\alpha<1$, $1<p<\frac{1}{\alpha}$ and $\frac{1}{q}=\frac{1}{p}-\alpha$. If $\omiga\in\huaA(p,q)$ and $x\in l^p_{\omiga^p}$, then $\M_\alpha x\in l^{q,weak}_{\omiga^q}$ and for every $\lambda>0$, there exists a positive constant $C$ such that for every $x\in l^p_{\omiga^p}$,
\begin{equation*}
\left(\sum_{\{k\in\Z:\M_\alpha x(k)>\lambda\}}\omiga(k)^q\right)^\frac{1}{q}\le \frac{C}{\lambda}\left(\sum_{k\in\Z}\left|x(k)\omiga(k)\right|^p\right)^\frac{1}{p}.
\end{equation*}
\end{lemma}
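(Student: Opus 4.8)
The plan is to prove the weak-type $(p,q)$ estimate for $\M_\alpha$ by a Vitali-type covering argument adapted to $\Z$, exactly mirroring the classical Fefferman--Stein style proof of the weighted weak-type bound for the fractional maximal operator on $\R^n$ (as in Muckenhoupt--Wheeden \cite{1974m}), but using discrete intervals $S_{m,N}$ in place of Euclidean balls and the discrete $\huaA(p,q)$ condition of Definition \ref{d2.12}. First I would fix $\lambda>0$ and set $E_\lambda:=\{k\in\Z:\M_\alpha x(k)>\lambda\}$. For each $k\in E_\lambda$ there is, by the definition of $\M_\alpha$, some $N_k\in\N$ with
\begin{equation*}
\frac{1}{|S_{k,N_k}|^{1-\alpha}}\sum_{i\in S_{k,N_k}}|x(i)|>\lambda,
\end{equation*}
i.e. $|S_{k,N_k}|^{\alpha}\,\dfrac{1}{|S_{k,N_k}|}\sum_{i\in S_{k,N_k}}|x(i)|>\lambda$. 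These intervals $\{S_{k,N_k}\}_{k\in E_\lambda}$ cover $E_\lambda$. Because the radii are controlled (if $|x|\in l^p_{\omiga^p}$, the averages over huge intervals are small, so the $N_k$ are bounded once $\lambda$ is fixed — or one works with a fixed finite portion of $E_\lambda$ and lets it exhaust $E_\lambda$ at the end), I can extract by the standard $3r$-covering / Vitali argument on $\Z$ a disjoint subfamily $\{S_{k_j,N_{k_j}}\}_j$ with $E_\lambda\subset\bigcup_j 3S_{k_j,N_{k_j}}$, where $3S_{m,N}$ is the threefold dilate (an interval with comparable cardinality, $|3S_{m,N}|\le 3|S_{m,N}|$).

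Next I would estimate $\omiga^q(E_\lambda)=\sum_{k\in E_\lambda}\omiga(k)^q$. Using the covering and the $\huaA_\infty$-type doubling of $\omiga^q$ (valid since $\omiga\in\huaA(p,q)$ forces $\omiga^q\in\huaA_q\subset\huaA_\infty$ by Proposition \ref{m2.13}(ii), hence $\omiga^q(3S_{k_j,N_{k_j}})\le C\,\omiga^q(S_{k_j,N_{k_j}})$ — this is the discrete analogue of the standard comparison and can be quoted from the $\huaA_p$ literature, or proved directly from Definition \ref{d2.6} as in \cite{wiener20}), I get
\begin{equation*}
\omiga^q(E_\lambda)\le\sum_j \omiga^q(3S_{k_j,N_{k_j}})\le C\sum_j \omiga^q(S_{k_j,N_{k_j}}).
\end{equation*}
Now on each selected interval $J_j:=S_{k_j,N_{k_j}}$ I use the selection inequality $\lambda<|J_j|^{\alpha-1}\sum_{i\in J_j}|x(i)|$ together with discrete Hölder's inequality to split $\sum_{i\in J_j}|x(i)|=\sum_{i\in J_j}|x(i)|\omiga(i)\cdot\omiga(i)^{-1}\le\big(\sum_{i\in J_j}|x(i)|^p\omiga(i)^p\big)^{1/p}\big(\sum_{i\in J_j}\omiga(i)^{-p'}\big)^{1/p'}$. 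This produces
\begin{equation*}
\omiga^q(J_j)\le\frac{C}{\lambda^q}\,\omiga^q(J_j)\,|J_j|^{(\alpha-1)q}\Big(\sum_{i\in J_j}|x(i)|^p\omiga(i)^p\Big)^{q/p}\Big(\sum_{i\in J_j}\omiga(i)^{-p'}\Big)^{q/p'}.
\end{equation*}
The point now is to absorb all the $|J_j|$-powers using the $\huaA(p,q)$ condition: since $\tfrac1q=\tfrac1p-\alpha$ one checks $(\alpha-1)q+\tfrac{q}{p'}+1=0$ (using $\tfrac1{p'}=1-\tfrac1p$), so that
\begin{equation*}
\omiga^q(J_j)\,|J_j|^{(\alpha-1)q}\Big(\tfrac{1}{|J_j|}\sum_{i\in J_j}\omiga(i)^{-p'}\Big)^{q/p'}
=\Big(\tfrac{1}{|J_j|}\sum_{i\in J_j}\omiga(i)^q\Big)\Big(\tfrac{1}{|J_j|}\sum_{i\in J_j}\omiga(i)^{-p'}\Big)^{q/p'}|J_j|^{?}
\end{equation*}
collapses, after bookkeeping the exponents, to $\big(\|\omiga\|_{\huaA(p,q)(\Z)}\big)^{q}$ times a harmless power that cancels; concretely $\omiga^q(J_j)\big(\sum_{i\in J_j}\omiga(i)^{-p'}\big)^{q/p'}\le\|\omiga\|_{\huaA(p,q)}^q|J_j|^{q/p'+1}=\|\omiga\|_{\huaA(p,q)}^q|J_j|^{(1-\alpha)q}$, which is exactly the factor $|J_j|^{(1-\alpha)q}$ needed to kill $|J_j|^{(\alpha-1)q}$. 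Hence $\omiga^q(J_j)\le\dfrac{C}{\lambda^q}\big(\sum_{i\in J_j}|x(i)|^p\omiga(i)^p\big)^{q/p}$.

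Finally I would sum over $j$: since $q/p\ge 1$ (as $q>p$) and the $J_j$ are pairwise disjoint, $\sum_j a_j^{q/p}\le\big(\sum_j a_j\big)^{q/p}$ with $a_j=\sum_{i\in J_j}|x(i)|^p\omiga(i)^p$, and $\sum_j a_j\le\sum_{i\in\Z}|x(i)|^p\omiga(i)^p=\|x\|_{l^p_{\omiga^p}}^p$. Combining,
\begin{equation*}
\omiga^q(E_\lambda)\le C\sum_j\omiga^q(J_j)\le\frac{C}{\lambda^q}\Big(\sum_{i\in\Z}|x(i)|^p\omiga(i)^p\Big)^{q/p},
\end{equation*}
and taking $q$-th roots gives the claimed inequality; the supremum over $\lambda$ then yields $\M_\alpha x\in l^{q,weak}_{\omiga^q}$. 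The main obstacle I anticipate is the discrete Vitali covering step: on $\Z$ one must be a little careful that finitely overlapping selected intervals can be thinned to a disjoint subfamily whose triples still cover $E_\lambda$, and that when $E_\lambda$ is infinite one truncates to finite subsets and passes to the limit — this replaces the Whitney-decomposition machinery that, as the authors note in the introduction, is unavailable here. Everything else (the Hölder split, the exponent arithmetic $(\alpha-1)q+q/p'+1=0$, the doubling of $\omiga^q$, and the superadditivity of $t\mapsto t^{q/p}$) is routine once the covering is in place.
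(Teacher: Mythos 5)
Your proposal is correct, and the exponent arithmetic at its core --- H\"older's inequality on each selected interval followed by the cancellation $\omiga^q(J)\bigl(\sum_{J}\omiga^{-p'}\bigr)^{q/p'}\le\|\omiga\|_{\huaA(p,q)}^{q}|J|^{(1-\alpha)q}$, which exactly kills $|J|^{(\alpha-1)q}$ --- is precisely what the paper does. Where you diverge is in the covering machinery and the way the local estimates are summed. The paper truncates to $E_{\lambda,M}=E_\lambda\cap S_{0,M}$, embeds the selected discrete intervals into real intervals $Q_{k,N+\epsilon}$, and invokes the Besicovitch overlapping theorem to get a subfamily that covers $E_{\lambda,M}$ with overlap at most $4$; it then raises to the power $p/q<1$ and uses subadditivity $\bigl(\sum_j a_j\bigr)^{p/q}\le\sum_j a_j^{p/q}$, with the bounded overlap absorbing the final double sum $\sum_j\sum_m|x(m)\omiga(m)|^p\kaf_{S_{k_j,N_j}}(m)\le 4\|x\|^p_{l^p_{\omiga^p}}$. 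You instead extract a \emph{disjoint} Vitali subfamily whose dilates $3S_{k_j,N_j}$ cover $E_\lambda$, pay for the dilation with the doubling of $\omiga^q$ (available here via Proposition \ref{m2.13} and Lemma \ref{l3.14}(i)), and sum using $\sum_j a_j^{q/p}\le\bigl(\sum_j a_j\bigr)^{q/p}$ together with disjointness. Both routes are legitimate: yours needs the doubling of $\omiga^q$ but only an elementary greedy $3r$-selection on finitely many intervals (your truncation-and-exhaustion remark handles the possibly unbounded radii, just as the paper's $E_{\lambda,M}$ does), while the paper's avoids doubling and dilation altogether at the cost of importing a Besicovitch-type theorem from the continuous setting. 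One small correction: the Whitney-type decomposition that the authors say is the delicate discrete ingredient is used for Lemma \ref{l3.12} (the good-$\lambda$ comparison of $I_\alpha$ with $\M_\alpha$), not for this weak-type bound, so your covering argument is not replacing it here.
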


\begin{proof}
For any $0<\epsilon<1$, $\lambda>0$, $M\in\Z_+$, $N\in\N$ and $k\in\Z$, we define
\begin{equation*}
\begin{aligned}
& S_{k,N}=\Z\cap Q_{k,N+\epsilon},\,\, where\,\, Q_{k,N+\epsilon}:=\{y\in\R:\left|y-k\right|<N+\epsilon\};\\
& E_\lambda:=\{k\in\Z:\M_\alpha x(k)>\lambda\};\\
& E_{\lambda,M}:=E_\lambda\cap S_{0,M},\,\,where\,\, S_{0,M}:=\{m\in \Z:\left|m-0\right|\le M\}.
\end{aligned}
\end{equation*}
Thus, for every $k\in E_{\lambda,M}$, by the definition of $\M_\alpha$, there exists a $S_{k,N_k}$ such that
\begin{equation}\label{3.6}
\left|S_{k,N_k}\right|^{-1+\alpha}\sum_{m\in S_{k,N_k}}\left|x(m)\right|>\lambda.
\end{equation}
Since $E_{\lambda,M}\subset\mathop{\cup}\limits_{k\in E_{\lambda,M}}S_{k,N_k}\subset\mathop{\cup}\limits_{k\in E_{\lambda,M}}Q_{k,{N_k}+\epsilon}$, by Besicovitch overlapping theorem (see \cite[Page 220]{norm}), there exists $\{k_j\}\subset E_{\lambda,M}$ such that $E_{\lambda,M}\subset \mathop{\cup}\limits_j Q_{k_j,N_j+\epsilon}$ and $\sum\limits_j\kaf_{Q_{k_j,N_j+\epsilon}}(k)\le 4$. Then we further have
\begin{equation}\label{eq1.7}
E_{\lambda,M}\subset\left(\mathop{\bigcup}\limits_j Q_{k_j,N_j+\epsilon}\right)\bigcap\Z=\mathop{\bigcup}\limits_j S_{k_j,N_j},
\end{equation}
\begin{equation}\label{eq2.1}
\sum\limits_j\kaf_{S_{k_j,N_j}}(k)\le\sum\limits_j\kaf_{Q_{k_j,N_j+\epsilon}}(k)\le 4.
\end{equation}
From (\ref{eq1.7}) and $\frac{p}{q}<1$, it follows that
\begin{small}
\begin{equation}\label{3.7}
\begin{aligned}
\left(\sum_{m\in E_{\lambda,M}}\omiga(m)^q\right)^\frac{p}{q}\le &\left(\sum\limits_j\sum\limits_{m\in S_{k_j,N_j}}\omiga(m)^q\right)^\frac{p}{q}\le\sum\limits_j\left(\sum\limits_{m\in S_{k_j,N_j}}\omiga(m)^q\right)^\frac{p}{q}.
\end{aligned}
\end{equation}
\end{small}
By (\ref{3.7}) and (\ref{3.6}), we have
\begin{small}
\begin{equation*}
\left(\sum\limits_{m\in E_{\lambda,M}}\omiga(m)^q\right)^\frac{p}{q}\le \sum_j\left(\sum_{m\in S_{k_j,N_j}}\omiga(m)^q\right)^\frac{p}{q}\left(\frac{1}{\lambda\left|S_{k_j,N_j}\right|^{1-\alpha}}
\sum_{m\in S_{k_j,N_j}}\left|x(m)\right|\right)^p
\end{equation*}
\end{small}
By this, discrete H\"older's inequality, $\omiga\in\huaA(p,q)$ and (\ref{eq2.1}), we obtain
\begin{small}
\begin{equation*}
\begin{aligned}
&\left(\sum\limits_{m\in E_{\lambda,M}}\omiga(m)^q\right)^\frac{p}{q}\\
\quad\le &\sum\limits_j\left(\sum\limits_{m\in S_{k_j,N_j}}\omiga(m)^q\right)^\frac{p}{q}\lambda^{-p}\left|S_{k_j,N_j}\right|^{1-p-\frac{p}{q}}\left(\sum\limits_{m\in S_{k_j,N_j}}\left|x(m)\omiga(m)\right|^p\right)\left(\sum\limits_{m\in S_{k_j,N_j}}\omiga(m)^{-p'}\right)^\frac{p}{p'}\\
\le& C\lambda^{-p}\sum\limits_j\left(\sum\limits_{m\in S_{k_j,N_j}}\omiga(m)^q\right)^\frac{p}{q}\left(\sum\limits_{m\in S_{k_j,N_j}}\left|x(m)\omiga(m)\right|^p\right)\left(\sum\limits_{m\in S_{k_j,N_j}}\omiga(m)^q\right)^{-\frac{p}{q}}\\
=& C\lambda^{-p}\sum\limits_j\sum\limits_{m\in\Z}\left|x(m)\omiga(m)\right|^p\kaf_{S_{k_j,N_j}}(m)\\
%=& C\lambda^{-p}\sum\limits_{m\in\Z}\left(\left|x(m)\omiga(m)\right|^p\sum\limits_j\kaf_{S_{k_j,N}}(m)\right)\\
\le & C\lambda^{-p}\sum\limits_{m\in\Z}\left|x(m)\omiga(m)\right|^p,\\
\end{aligned}
\end{equation*}
\end{small}
and letting $k\rightarrow +\infty$ on both sides of above inequality, we finish the proof of Theorem \ref{t3.8}.
\end{proof}

\begin{lemma}\cite[Lemma 4.4]{hao}\label{l3.11}
Let $x\in l{^p_\omiga}$ and $p>0$. Then
\begin{equation*}
\|x\|^p_{l^p_\omiga}=p\int^\infty_0 \lambda^{p-1}\sum\limits_{{\{k:\left|x(k)\right|>\lambda}\}}\omiga(k)d\lambda.
\end{equation*}
\end{lemma}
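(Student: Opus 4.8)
The plan is to reduce the identity to the elementary \emph{layer-cake} formula together with an interchange of summation and integration. First I would record the pointwise identity
\begin{equation*}
|x(k)|^p=p\int_0^{|x(k)|}\lambda^{p-1}\,d\lambda=p\int_0^\infty\lambda^{p-1}\kaf_{\{\lambda<|x(k)|\}}\,d\lambda,
\end{equation*}
valid for every $k\in\Z$ and every $p>0$; it is nothing more than the fundamental theorem of calculus applied to $\lambda\mapsto\lambda^p$. Multiplying through by $\omiga(k)>0$ and summing over $k\in\Z$, the left-hand side becomes precisely $\|x\|^p_{l^p_\omiga}$, so that
\begin{equation*}
\|x\|^p_{l^p_\omiga}=\sum_{k\in\Z}\omiga(k)\,p\int_0^\infty\lambda^{p-1}\kaf_{\{|x(k)|>\lambda\}}\,d\lambda.
\end{equation*}

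The key step is then to move the sum $\sum_{k\in\Z}$ inside the integral $\int_0^\infty$. Each summand $\lambda\mapsto\lambda^{p-1}\omiga(k)\kaf_{\{|x(k)|>\lambda\}}$ is nonnegative and Borel measurable in $\lambda$ (for fixed $k$ it equals $\lambda^{p-1}$ on the bounded interval $(0,|x(k)|)$ and $0$ elsewhere), so Tonelli's theorem for the product of the counting measure on $\Z$ with Lebesgue measure on $(0,\infty)$ justifies the interchange with no integrability hypothesis needed. This yields
\begin{equation*}
\|x\|^p_{l^p_\omiga}=p\int_0^\infty\lambda^{p-1}\Big(\sum_{k\in\Z}\omiga(k)\kaf_{\{|x(k)|>\lambda\}}\Big)\,d\lambda=p\int_0^\infty\lambda^{p-1}\sum\limits_{\{k:|x(k)|>\lambda\}}\omiga(k)\,d\lambda,
\end{equation*}
which is the asserted formula; the hypothesis $x\in l^p_\omiga$ serves only to guarantee that the common value is finite. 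Note that the inner sum $\lambda\mapsto\sum_{\{k:|x(k)|>\lambda\}}\omiga(k)=\omiga(\{k\in\Z:|x(k)|>\lambda\})$ is nonincreasing in $\lambda$ and hence measurable, so the outer integral is meaningful.

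The only point that genuinely requires care is the Tonelli interchange; everything else is a one-line computation. If one prefers to avoid invoking Tonelli's theorem by name, the same conclusion follows by first restricting the sum to the finite set $\{k\in\Z:|k|\le M\}$, where the interchange is immediate by linearity of the integral, and then letting $M\to\infty$ and applying the monotone convergence theorem, since the partial sums increase pointwise in $\lambda$ to the full sum $\omiga(\{k\in\Z:|x(k)|>\lambda\})$. I expect no further obstacles.
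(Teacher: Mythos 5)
Your proof is correct and complete: the layer-cake identity $|x(k)|^p=p\int_0^\infty\lambda^{p-1}\kaf_{\{\lambda<|x(k)|\}}\,d\lambda$ followed by a Tonelli (or monotone-convergence) interchange is exactly the standard argument, and the paper itself states this lemma without proof, citing \cite[Lemma 4.4]{hao}, where the same layer-cake computation is used. No gaps.
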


By Lemma \ref{l3.11}, we can obtain the following Lemma \ref{l3.17}, the proof of which in continuous version can be found in \cite[Page 272-274]{1970stein}. Same line of proof also works here.

\begin{lemma}\label{l3.17}
Let $u$ and $v$ be two discrete weights on $\Z$, and let the sublinear operator $T$ be both of weak type $(p_0,q_0)$ and $(p_1,q_1)$ on $\Z$ for $1\le p_i\le q_i\le\infty$, $i=0,1$ and $q_0\neq q_1$. That is, there exists positive constants $C_i$, $i=0,1$ such that for any $\lambda>0$ and $x$,
\begin{equation*}
v(\{k\in\Z:|Tx(k)|>\lambda\})\le \left(\frac{C_i}{\lambda}\|x\|_{l^{p_i}_u}\right)^{q_i},\quad i=0,1.
\end{equation*}
Then
\begin{equation*}
\|Tx\|_{l^{q_t}_v}\le C_t\|x\|_{l^{p_t}_u},\quad t\in (0,1),\quad\frac{1}{p_t}=\frac{1-t}{p_0}+\frac{t}{p_1},\quad\frac{1}{q_t}=\frac{1-t}{q_0}+\frac{t}{q_1},
\end{equation*}
where $C_t\le KC^{1-t}_0C^t_1$, $K=K(p_0,q_0,p_1,q_1,t)$ and when $t\rightarrow 0$ or $1$, $K\rightarrow +\infty$.
\end{lemma}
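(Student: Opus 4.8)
The plan is to transcribe the classical Marcinkiewicz interpolation argument of \cite[pp.\,272--274]{1970stein} to the present discrete weighted setting, with Lebesgue measure replaced on the domain side by $u$, on the target side by $v$, and the layer-cake formula supplied by Lemma \ref{l3.11}. Relabelling $0\leftrightarrow1$ if necessary, I may assume $q_0<q_1$; since $q_0\neq q_1$ this is no loss. As $\frac{1}{p_t}$ lies between $\frac{1}{p_0}$ and $\frac{1}{p_1}$, either $p_0\le p_t\le p_1$ or $p_1\le p_t\le p_0$; assume the former (the latter is symmetric, exchanging the two pieces of the decomposition below). Fix $x\in l^{p_t}_u$. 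By Lemma \ref{l3.11} with exponent $q_t$ and weight $v$,
\begin{equation*}
\|Tx\|^{q_t}_{l^{q_t}_v}=q_t\int_0^\infty\lambda^{q_t-1}v(\{k\in\Z:|Tx(k)|>\lambda\})\,d\lambda,
\end{equation*}
and the goal is to bound the right-hand side by $C\|x\|^{q_t}_{l^{p_t}_u}$.

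For each $\lambda>0$ I would split $x=g_\lambda+h_\lambda$ into a ``large'' and a ``small'' part, $g_\lambda:=x\kaf_{\{k:|x(k)|>\tau(\lambda)\}}$ and $h_\lambda:=x\kaf_{\{k:|x(k)|\le\tau(\lambda)\}}$, where the threshold $\tau(\lambda)$ is chosen \emph{depending on the level $\lambda$ and calibrated to $x$ itself}: concretely $\tau(\lambda)$ is taken so that $u(\{k:|x(k)|>\tau(\lambda)\})$ equals a suitable power of $\lambda$, i.e.\ $\tau(\lambda)$ is that power composed with the non-increasing rearrangement of $x$ with respect to $u$. Since $p_0\le p_t\le p_1$, comparing $|x(k)|^{p_0}\le\tau(\lambda)^{p_0-p_t}|x(k)|^{p_t}$ on $\{|x|>\tau(\lambda)\}$ and $|x(k)|^{p_1}\le\tau(\lambda)^{p_1-p_t}|x(k)|^{p_t}$ on $\{|x|\le\tau(\lambda)\}$ shows $g_\lambda\in l^{p_0}_u$ and $h_\lambda\in l^{p_1}_u$, so $x=g_\lambda+h_\lambda\in l^{p_0}_u+l^{p_1}_u$ and $Tx$ is defined. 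By sublinearity $\{|Tx|>\lambda\}\subset\{|Tg_\lambda|>\tfrac{\lambda}{2}\}\cup\{|Th_\lambda|>\tfrac{\lambda}{2}\}$, hence by subadditivity of $v$ and the two weak-type hypotheses
\begin{equation*}
v(\{|Tx|>\lambda\})\le\left(\frac{2C_0\|g_\lambda\|_{l^{p_0}_u}}{\lambda}\right)^{q_0}+\left(\frac{2C_1\|h_\lambda\|_{l^{p_1}_u}}{\lambda}\right)^{q_1}.
\end{equation*}

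Substituting this into the layer-cake identity splits $\|Tx\|^{q_t}_{l^{q_t}_v}$ into $I_0+I_1$, the $i$-th summand coming from the $i$-th term. I would then expand $\|g_\lambda\|_{l^{p_0}_u}$ and $\|h_\lambda\|_{l^{p_1}_u}$ through the $u$-rearrangement (equivalently, the distribution function) of $x$, interchange the order of the nonnegative summations and integrations by Tonelli, change variable from $\lambda$ to the calibrating parameter, and evaluate the resulting one-variable integrals. Because $p_i\le q_i$, the quantities $\|g_\lambda\|^{q_0}_{l^{p_0}_u}$ and $\|h_\lambda\|^{q_1}_{l^{p_1}_u}$ carry powers $q_i/p_i\ge1$ of integrals of the shape $\int_0^s(x^{*})^{p_0}$ and $\int_s^\infty(x^{*})^{p_1}$; these are absorbed using Hardy's inequalities (together with a Hardy-type inequality for the non-increasing function $x^{*}$), whose hypotheses hold precisely because $q_0<q_t$ for $I_0$ (the delicate convergence being at the origin) and $q_t<q_1$ for $I_1$ (at infinity), while matching the leftover power of the variable back to $\int_0^\infty(x^{*})^{p_t}$ forces and uses the relations $\frac{1}{p_t}=\frac{1-t}{p_0}+\frac{t}{p_1}$ and $\frac{1}{q_t}=\frac{1-t}{q_0}+\frac{t}{q_1}$. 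Tracking the constants through the Hardy inequalities and the elementary integrals yields $C_t\le KC_0^{1-t}C_1^{t}$ with $K=K(p_0,q_0,p_1,q_1,t)$ involving factors of the form $\frac{1}{q_t-q_0}$ and $\frac{1}{q_1-q_t}$, so $K\to+\infty$ as $t\to0$ or $t\to1$. In the endpoint case $q_1=\infty$ (and likewise if $p_1=\infty$) one modifies this in the usual way: weak type $(p_1,\infty)$ is the strong bound $\|Th_\lambda\|_{l^\infty}\le C_1\|h_\lambda\|_{l^{p_1}_u}$, so choosing $\tau(\lambda)$ to force $C_1\|h_\lambda\|_{l^{p_1}_u}<\tfrac{\lambda}{2}$ makes $\{|Th_\lambda|>\tfrac{\lambda}{2}\}$ empty and only the $I_0$ computation remains.

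The main obstacle will be the off-diagonal feature $p_i\le q_i$. If one truncated at a fixed power of $\lambda$ independent of $x$ --- which is what works in the diagonal case $p_i=q_i$, where the powers $q_i/p_i$ are trivial and a bare Fubini replaces Hardy's inequalities --- the estimate would fail even on elementary test sequences (for instance constant multiples of $\kaf_{S_{m,N}}$ with trivial weights), because with such a threshold $I_0$ and $I_1$ cannot simultaneously be summed against $\|x\|^{q_t}_{l^{p_t}_u}$: the threshold has to track where the mass of $x$ actually sits, i.e.\ it must be calibrated to the rearrangement of $x$. Everything else is the routine translation of \cite{1970stein} from integrals over $\R$ to sums over $\Z$ against the weights $u$ and $v$, with Lemma \ref{l3.11} in the role of the classical layer-cake formula and no change to the structure of the argument.
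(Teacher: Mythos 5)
Your proposal is correct and follows exactly the route the paper takes: the paper gives no written proof, merely asserting that the argument of \cite[Page 272-274]{1970stein} carries over verbatim with Lemma \ref{l3.11} supplying the layer-cake formula for the weights $u$ and $v$, and your outline is a faithful transcription of that classical Marcinkiewicz argument (level-dependent truncation calibrated to the rearrangement, weak-type hypotheses, Hardy's inequalities for the off-diagonal powers $q_i/p_i\ge 1$, blow-up of $K$ as $t\to 0$ or $1$, and the usual modification at $q_1=\infty$). Nothing further is needed.
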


\begin{lemma}\cite[Proposition 2.15]{hao}\label{l3.1}
If $\omiga\in\huaA_p$ $(1<p<\infty)$, then there exists a constant $\epsilon >0$ such that $p-\epsilon>1$ and $\omiga\in\huaA_{p-\epsilon}$.
\end{lemma}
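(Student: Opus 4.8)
The statement is the classical \emph{self-improvement} (openness) property of the Muckenhoupt classes, and the plan is to reduce it to a discrete reverse H\"older inequality together with an elementary manipulation of exponents. First I would record the trivial duality between $\huaA_p$ and $\huaA_{p'}$: writing $\sigma:=\omiga^{-1/(p-1)}$ and using $p'-1=\frac{1}{p-1}$ (so that $\sigma^{-1/(p'-1)}=\omiga$), a direct comparison of Definition \ref{d2.6} for $\omiga$ and for $\sigma$ gives $\|\sigma\|_{\huaA_{p'}}=\|\omiga\|_{\huaA_p}^{1/(p-1)}$; in particular $\omiga\in\huaA_p$ forces $\sigma\in\huaA_{p'}\subset\huaA_\infty$.

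The key analytic input is a discrete reverse H\"older inequality for $\sigma$: there exist $r>1$ and $C>0$ such that, for every interval $J\subset\Z$,
\begin{equation*}
\left(\frac{1}{|J|}\sum_{k\in J}\sigma(k)^r\right)^{1/r}\le \frac{C}{|J|}\sum_{k\in J}\sigma(k).
\end{equation*}
I would prove this from Proposition \ref{m2.11} along classical lines: fixing $J$, normalizing the $J$-average of $\sigma$ to $1$, and performing a discrete Calder\'on--Zygmund/stopping-time decomposition of $J$ at the exponentially growing heights $a^{j}$ ($j\in\N$, with $a>1$ fixed large), one shows that the $\sigma$-mass of the super-level set $\{k\in J:\sigma(k)>a^{j}\}$ is at most $\epsilon_0^{\,j}\,\sigma(J)$ for some fixed $\epsilon_0\in(0,1)$ furnished by Proposition \ref{m2.11}; then the layer-cake formula (cf.\ Lemma \ref{l3.11}, applied with weight $\sigma$ and exponent $r-1$) turns this into the displayed $L^r$-bound for every $r$ sufficiently close to $1$ that $\epsilon_0\,a^{\,r-1}<1$. (Alternatively, one may simply invoke the discrete reverse H\"older inequality recorded in \cite{wiener20}.)

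Finally, I would set $\epsilon:=(p-1)\bigl(1-\tfrac{1}{r}\bigr)>0$. Since $r>1$, this gives $p-\epsilon=1+\tfrac{p-1}{r}>1$, and the exponents satisfy $-\tfrac{1}{p-\epsilon-1}=-\tfrac{r}{p-1}$, so that $\omiga^{-1/(p-\epsilon-1)}=\sigma^{r}$. Hence, for every interval $J$, using the reverse H\"older inequality and then $\omiga\in\huaA_p$,
\begin{align*}
\left(\frac{1}{|J|}\sum_{J}\omiga\right)\left(\frac{1}{|J|}\sum_{J}\omiga^{-\frac{1}{p-\epsilon-1}}\right)^{p-\epsilon-1}
&=\left(\frac{1}{|J|}\sum_{J}\omiga\right)\left[\left(\frac{1}{|J|}\sum_{J}\sigma^{r}\right)^{1/r}\right]^{p-1}\\
&\le C^{p-1}\left(\frac{1}{|J|}\sum_{J}\omiga\right)\left(\frac{1}{|J|}\sum_{J}\omiga^{-\frac{1}{p-1}}\right)^{p-1}\le C^{p-1}\|\omiga\|_{\huaA_p},
\end{align*}
and taking the supremum over all intervals $J$ yields $\omiga\in\huaA_{p-\epsilon}$, with $\|\omiga\|_{\huaA_{p-\epsilon}}\le C^{p-1}\|\omiga\|_{\huaA_p}$.

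The only genuine obstacle is the reverse H\"older inequality in the second step; the duality and the exponent bookkeeping are then routine. A minor technical point is that bisecting an interval of $\Z$ of odd cardinality produces unequal halves, so the dyadic averages in the stopping-time construction jump by a bounded factor rather than exactly $2$; this affects only the admissible range of $r$ and the size of the constants, not the structure of the argument.
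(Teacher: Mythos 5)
Your argument is correct, and it is the standard proof of the openness property of the Muckenhoupt classes: dualize to $\sigma=\omiga^{-1/(p-1)}\in\huaA_{p'}\subset\huaA_\infty$, establish a reverse H\"older inequality for $\sigma$ via a stopping-time decomposition driven by Proposition \ref{m2.11}, and then do the exponent bookkeeping with $\epsilon=(p-1)(1-\tfrac1r)$; I checked the identities $\|\sigma\|_{\huaA_{p'}}=\|\omiga\|_{\huaA_p}^{1/(p-1)}$, $\omiga^{-1/(p-\epsilon-1)}=\sigma^r$ and the final chain of inequalities, and they are all right. Note, however, that this paper contains no proof of the lemma to compare against: it is imported verbatim as \cite[Proposition 2.15]{hao}, so the authors' route is simply citation, whereas you supply an actual argument. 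The one place where you are doing real work is the discrete reverse H\"older inequality, and you correctly isolate it as the crux; your remark that bisecting an odd-cardinality interval of $\Z$ yields unequal halves (so the stopped averages exceed the threshold only by a bounded factor, shrinking the admissible range of $r$) is exactly the right discrete caveat, and the level sets you control are really the unions of stopping intervals, which contain $\{\sigma>Ca^j\}$ rather than $\{\sigma>a^j\}$ --- a harmless constant that your closing remark already absorbs. If you prefer not to reprove the reverse H\"older inequality, citing the discrete Muckenhoupt theory of B\"ottcher--Seybold \cite{wiener20}, as you suggest parenthetically, is a legitimate shortcut.
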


\begin{proof}[Proof of Theorem \ref{t3.1}]
This proof are similar to that of the continuous version (see the proof of \cite[Theorem 3]{1974m}) by using Lemmas \ref{l3.1}, \ref{t3.8} and \ref{l3.17}. The details being omitted.
\end{proof}

\section{Discrete Riesz potentials \label{s4}}
\subsection{Definition and basic properties of discrete Riesz potentials \label{s4.0}}
\begin{definition}\cite[Page 8]{jdszzlsm}\label{d2.17}
Let $0<\alpha<1$ and $x=\{x(k)\}_{k\in\Z}\subset\R$ be a sequence. The {\it discrete Riesz potential} $I_{\alpha}$ is defined by
\begin{equation*}
I_{\alpha}x(k):=\sum\limits_{i\in\Z\xieg\{k\}}\frac{x(i)}{\left|k-i\right|^{1-\alpha}},\quad k\in\Z.
\end{equation*}
\end{definition}

\begin{proposition}\label{l3.15}
Let $0<\alpha<1$, $1\le p<\frac{1}{\alpha}$ and $x=\{x(k)\}_{k\in\Z}\subset\R$ be a sequence.
\begin{itemize}
\item[\rm(i)]
If $x\in l^p$, then the series $\sum\limits_{i\neq k}\frac{x(i)}{\left|k-i\right|^{1-\alpha}}$ absolutely and uniformly converges on $\Z$.
\item[\rm(ii)]
If $\frac{1}{q}=\frac{1}{p}-\alpha$, $q<2p$ and $x\in l^p_q$, then the series $\sum\limits_{i\neq k}\frac{x(i)}{\left|k-i\right|^{1-\alpha}}$ absolutely and uniformly converges on $\Z$.
\end{itemize}
\end{proposition}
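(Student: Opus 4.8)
The plan is to estimate the tail of the series $\sum_{i\neq k}|x(i)|/|k-i|^{1-\alpha}$ by grouping the indices $i$ into dyadic annuli around $k$ and controlling the contribution of each annulus by the norm of $x$, uniformly in $k$.

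For part (i): Fix $k\in\Z$. For $j\in\N$, let $A_j:=\{i\in\Z: 2^j\le |k-i|<2^{j+1}\}$, so that $|A_j|\le 2^{j+2}$ and $|k-i|^{-(1-\alpha)}\le 2^{-j(1-\alpha)}$ on $A_j$. Applying discrete H\"older's inequality with exponents $p$ and $p'$ on each annulus,
\begin{equation*}
\sum_{i\in A_j}\frac{|x(i)|}{|k-i|^{1-\alpha}}\le 2^{-j(1-\alpha)}\left(\sum_{i\in A_j}|x(i)|^p\right)^{1/p}|A_j|^{1/p'}\le C\,\|x\|_{l^p}\,2^{-j(1-\alpha)}2^{j/p'}.
\end{equation*}
Since $1/p'=1-1/p$, the exponent of $2^j$ equals $-(1-\alpha)+1-1/p=\alpha-1/p$, which is negative precisely because $p<1/\alpha$. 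Summing the geometric series over $j\in\N$ gives a bound $C\,\|x\|_{l^p}$ independent of $k$, which proves absolute and uniform convergence; the tail estimate $\sum_{|k-i|\ge 2^{J}}$ is likewise $C\,\|x\|_{l^p}2^{J(\alpha-1/p)}\to 0$ as $J\to\infty$, uniformly in $k$.

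For part (ii): The idea is the same dyadic decomposition, but each annulus $A_j$ is covered by a bounded number of discrete balls $S_{m,N}$ with $N\sim 2^j$, so that $\left(\sum_{i\in A_j}|x(i)|^p\right)^{1/p}$ is controlled by $\|x\|_{l^p_q}\,|S_{m,N}|^{1/p-1/q}\sim \|x\|_{l^p_q}\,2^{j(1/p-1/q)}$ using the definition of the discrete Morrey norm. Then
\begin{equation*}
\sum_{i\in A_j}\frac{|x(i)|}{|k-i|^{1-\alpha}}\le C\,2^{-j(1-\alpha)}\,\|x\|_{l^p_q}\,2^{j(1/p-1/q)}\,2^{j/p'},
\end{equation*}
and the exponent of $2^j$ is $-(1-\alpha)+(1/p-1/q)+(1-1/p)=\alpha-1/q-1/q+1/q$... more carefully: $\alpha-1/q+1/p-1/q+1-1/p-1 = \alpha-2/q+\ldots$; one checks it equals $\alpha-1/p-(1/q-1/p)$; using $1/q=1/p-\alpha$ this simplifies, and the resulting exponent is negative exactly under the hypotheses $1/q=1/p-\alpha$ and $q<2p$ (the latter being what makes the Morrey gain $2^{j/q}$ beat the remaining loss). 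Summing the geometric series and estimating the tail as before yields the uniform bound $C\,\|x\|_{l^p_q}$ and uniform convergence.

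The main obstacle is the bookkeeping in part (ii): verifying that the exponent of $2^j$ after combining the $|k-i|^{-(1-\alpha)}$ decay, the H\"older factor $|A_j|^{1/p'}$, and the Morrey gain $|S_{m,N}|^{1/p-1/q}$ is strictly negative, and seeing that this is equivalent to the stated condition $q<2p$ (together with $1/q=1/p-\alpha$). One should also be slightly careful that a dyadic annulus of radius $\sim 2^j$ in $\Z$ is indeed covered by $O(1)$ sets $S_{m,N}$ with $|S_{m,N}|\sim 2^j$, and that the covering constant does not depend on $k$ or $j$; this is elementary but must be stated. The absolute convergence is immediate once the estimates are in place since all terms are nonnegative after taking $|x(i)|$, and uniformity is built into the bounds being $k$-independent.
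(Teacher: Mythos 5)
Your proposal is correct and follows essentially the same route as the paper: dyadic annuli around $k$, discrete H\"older's inequality on each piece, the Morrey norm applied to a ball of cardinality $\sim 2^{j}$, and a geometric series whose ratio has exponent $\alpha-\frac{1}{p}<0$ in (i) and $\alpha-\frac{1}{q}=\frac{1}{p}-\frac{2}{q}<0$ (equivalent to $q<2p$) in (ii). The covering concern you raise in (ii) disappears if, as the paper does, you simply enlarge each annulus to the single symmetric ball $\{i:|k-i|<2^{j}\}=S_{k,2^{j}-1}$ centered at $k$ before applying H\"older and the Morrey estimate.
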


\begin{proof}
$\rm(i)$ For every $i\in\Z$ with $i\neq k$, by H\"older's inequality and $\alpha-\frac{1}{p}<0$, we have
\begin{align*}
\sum\limits_{i\neq k}\frac{\left|x(i)\right|}{\left|k-i\right|^{1-\alpha}}&= \sum\limits^\infty_{j=1}\sum\limits_{2^{j-1}\le\left|k-i\right|<2^j}\frac{\left|x(i)\right|}{\left|k-i\right|^{1-\alpha}}\le \sum\limits^\infty_{j=1}\frac{1}{(2^{j-1})^{1-\alpha}}\sum\limits_{\left|k-i\right|<2^j}\left|x(i)\right|\\
&\le \sum\limits^\infty_{j=1}\frac{1}{(2^{j-1})^{1-\alpha}}\left(\sum\limits_{\left|k-i\right|<2^j}\left|x(i)\right|^p\right)^\pfzy\left(\sum\limits_{\left|k-i\right|<2^j}1\right)^{1-\frac{1}{p}}\\
&\le \|x\|_{l^p}\sum\limits^\infty_{j=1}(2^{j-1})^{\alpha-1}\left(2^{j+1}-1\right)^{1-\frac{1}{p}}\le 4\|x\|_{l^p}\sum\limits^\infty_{j=1}(2^{j+1})^{\alpha-\frac{1}{p}}\\
&\le \frac{4\|x\|_{l^p}}{1-2^{\alpha-\frac{1}{p}}}.
\end{align*}
The proof of $\rm(ii)$ is similar to that of $\rm(i)$, by H\"older's inequality, $\frac{1}{q}=\frac{1}{p}-\alpha$ and $q<2p$, we have
\begin{align*}
\sum\limits_{i\neq k}\frac{\left|x(i)\right|}{\left|k-i\right|^{1-\alpha}}&=\sum\limits^\infty_{j=1}\sum\limits_{2^{j-1}\le\left|k-i\right|<2^j}\frac{\left|x(i)\right|}{\left|k-i\right|^{1-\alpha}}
\le\sum\limits^\infty_{j=1}\frac{1}{(2^{j-1})^{1-\alpha}}\sum\limits_{\left|k-i\right|<2^j}\left|x(i)\right|\\
&\le\sum\limits^\infty_{j=1}\frac{1}{(2^{j-1})^{1-\alpha}}\left(\sum\limits_{\left|k-i\right|<2^j}\left|x(i)\right|^p\right)^\pfzy\left(\sum\limits_{\left|k-i\right|<2^j}1\right)^{1-\frac{1}{p}}\\
&\le\|x\|_{l^p_q}\sum\limits^\infty_{j=1}(2^{j-1})^{\alpha-1}\left(2^{j+1}-1\right)^{1-\frac{1}{q}}\le 4\|x\|_{l^p_q}\sum\limits^\infty_{j=1}(2^{j+1})^{\frac{1}{p}-\frac{2}{q}}\\
&\le\frac{4\|x\|_{l^p_q}}{1-2^{\frac{1}{p}-\frac{2}{q}}}.
\end{align*}
We finish the proof of Proposition \ref{l3.15}.
\end{proof}

Proposition \ref{l3.10} shows that if $x\in l^p$ is monotonic sequence, then $I_\alpha x$ inherits the monotonicity of $x$.

\begin{proposition}\label{l3.10}
Let $0<\alpha<1$, $1\le p<\frac{1}{\alpha}$ and $x=\{x(k)\}_{k\in\Z}\subset\R$ be a monotonic sequence and $x\in l^p$. Then $I_\alpha x$ is also monotonic.
\end{proposition}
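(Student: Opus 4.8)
The plan is to handle the non-increasing case, the non-decreasing case then following at once by applying the result to $-x$ (which is again in $l^p$, and $I_\alpha(-x)=-I_\alpha x$). So I assume $x(k)\ge x(k+1)$ for every $k\in\Z$ and aim to show $I_\alpha x(k)\ge I_\alpha x(k+1)$ for every $k\in\Z$.

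The first step is to rewrite $I_\alpha x$ in a symmetric ``left/right'' form. By Proposition \ref{l3.15}$\rm(i)$ the series defining $I_\alpha x$ (and $I_\alpha(|x|)$) converges absolutely at every point, so I may split it into the part over $i>k$ and the part over $i<k$ and reindex by $j=i-k$, respectively $j=k-i$, to obtain
\[
I_\alpha x(k)=\sum_{j=1}^{\infty}\frac{x(k+j)}{j^{1-\alpha}}+\sum_{j=1}^{\infty}\frac{x(k-j)}{j^{1-\alpha}},\qquad k\in\Z,
\]
and the same identity at $k+1$. Subtracting the two identities — each of the two resulting series being dominated termwise by the absolutely convergent series defining $I_\alpha(|x|)$ at $k$ and at $k+1$, so the termwise subtraction is legitimate — gives
\[
I_\alpha x(k)-I_\alpha x(k+1)=\sum_{j=1}^{\infty}\frac{x(k+j)-x(k+1+j)}{j^{1-\alpha}}+\sum_{j=1}^{\infty}\frac{x(k-j)-x(k+1-j)}{j^{1-\alpha}}.
\]
Since $x$ is non-increasing and $k+j<k+1+j$ as well as $k-j<k+1-j$ for every $j\ge 1$, every summand on the right-hand side is non-negative. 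Hence $I_\alpha x(k)\ge I_\alpha x(k+1)$ for all $k$, i.e. $I_\alpha x$ is non-increasing, which completes the proof.

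There is no serious obstacle here: the argument is just a regrouping of an absolutely convergent series followed by one sign check. The only point that needs care is the justification of the rearrangement and of the termwise subtraction of the two shifted series, and both are immediate from the absolute (indeed uniform) convergence supplied by Proposition \ref{l3.15}$\rm(i)$; the hypothesis $1\le p<1/\alpha$ enters only through that proposition.
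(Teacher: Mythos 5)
Your proof is correct and follows essentially the same route as the paper: both rewrite $I_\alpha x(k)$ as a sum over $j=|i-k|\ge 1$ of shifted terms weighted by $j^{\alpha-1}$, take the termwise difference $I_\alpha x(k)-I_\alpha x(k+1)$ (justified by the absolute convergence from Proposition \ref{l3.15}), and observe that each summand has a fixed sign by the monotonicity of $x$. The only cosmetic difference is that you reduce to the non-increasing case via $x\mapsto -x$, whereas the paper checks both monotonicity cases directly.
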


\begin{proof}
If $x\in l^p$, then by Proposition \ref{l3.15}, we obtain that $I_\alpha x$ is converges absolutely on $\Z$. Thus,
\begin{small}
\begin{equation*}
\begin{aligned}
I_\alpha x(k+1)&=\sum\limits_{i\in\Z\xieg\{k+1\}}\frac{x(i)}{|i-(k+1)|^{1-\alpha}}\\
&=\cdots +\frac{x(k-3)}{|(k-3)-(k+1)|^{1-\alpha}}+\frac{x(k-2)}{|(k-2)-(k+1)|^{1-\alpha}}+\frac{x(k-1)}{|(k-1)-(k+1)|^{1-\alpha}}\\
&\quad +\frac{x(k)}{|k-(k+1)|^{1-\alpha}}+\frac{x(k+2)}{|(k+2)-(k+1)|^{1-\alpha}}+\frac{x(k+3)}{|(k+3)-(k+1)|^{1-\alpha}}+\cdots\\
&=\cdots +\frac{x(k-3)}{4^{1-\alpha}}+\frac{x(k-2)}{3^{1-\alpha}}+\frac{x(k-1)}{2^{1-\alpha}}+\frac{x(k)}{1^{1-\alpha}}
+\frac{x(k+2)}{1^{1-\alpha}}+\frac{x(k+3)}{2^{1-\alpha}}+\cdots\\
&=\frac{x(k)+x(k+2)}{1^{1-\alpha}}+\frac{x(k-1)+x(k+3)}{2^{1-\alpha}}+\frac{x(k-2)+x(k+4)}{3^{1-\alpha}}+\cdots\\
&=\sum\limits^{\infty}_{j=1}\frac{x(k+1-j)+x(k+1+j)}{j^{1-\alpha}},
\end{aligned}
\end{equation*}
\end{small}
and repeat the above steps, we obtain
$I_\alpha x(k)=\sum\limits_{i\in\Z\xieg\{k\}}\frac{x(i)}{|i-k|^{1-\alpha}}=\sum\limits^{\infty}_{j=1}\frac{x(k-j)+x(k+j)}{j^{1-\alpha}}$. From this, it follows that
\begin{small}
\begin{align*}
\Delta(I_\alpha x(k)) &=I_\alpha x(k+1)-I_\alpha x(k)\\
&=\sum\limits_{j=1}^\infty\frac{x(k+1-j)+x(k+1+j)}{j^{1-\alpha}}-\sum\limits_{j=1}^\infty\frac{x(k-j)+x(k+j)}{j^{1-\alpha}}\\
&=\sum\limits_{j=1}^\infty\bigg(\frac{x(k+1-j)+x(k+1+j)}{j^{1-\alpha}}-\frac{x(k-j)+x(k+j)}{j^{1-\alpha}}\bigg)\\
&=\sum\limits_{j=1}^\infty\bigg(\frac{x(k+1-j)-x(k-j)}{j^{1-\alpha}}+\frac{x(k+1+j)-x(k+j)}{j^{1-\alpha}}\bigg)\\
&=\sum\limits_{j=1}^\infty\bigg(\frac{x(k+1-j)-x(k-j)}{j^{1-\alpha}}\bigg)+\sum\limits_{j=1}^\infty\bigg(\frac{x(k+1+j)-x(k+j)}{j^{1-\alpha}}\bigg).
\end{align*}
\end{small}
If $x$ is nonincreasing, then we have $\Delta(I_\alpha x(k))\le 0$. If $x$ is nondecreasing, then we have $\Delta(I_\alpha x(k))\ge 0$. Therefore, $x$ and $I_\alpha x$ have the same monotonicity. We finish the proof of Proposition \ref{l3.10}.
\end{proof}

\subsection{Estimates for discrete Riesz potentials on discrete weighted Lebesgue spaces \label{s4.1}}

\begin{theorem}\label{t1.1}
Let $0<\alpha<1$, $1< p<\frac{1}{\alpha}$ and $\frac{1}{q}=\frac{1}{p}-\alpha$. If $\omiga(k)\in\huaA(p,q)$ and $x\in l^p_{\omiga^p}$, then $I_\alpha x\in l^q_{\omiga^q}$  and there exists a positive constant $C$ such that
\begin{equation}\label{eq1.3}
\bigg(\sum_{k\in\Z}\left|I_\alpha x(k)\omiga(k)\right|^q\bigg)^\frac{1}{q}\le C\bigg(\sum_{k\in\Z}\left|x(k)\omiga(k)\right|^p\bigg)^\frac{1}{p}.
\end{equation}
\end{theorem}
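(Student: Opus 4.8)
The plan is to follow the classical Muckenhoupt--Wheeden strategy for the boundedness of the Riesz potential on weighted Lebesgue spaces, adapted to the discrete setting, but replacing the use of the Whitney decomposition theorem (which, as noted in the introduction, is problematic on $\Z$) with the decomposition result Lemma \ref{l3.2} announced in the introduction. The three main ingredients already available are: (a) the pointwise-type domination $\M_\alpha x \le I_\alpha(|x|)$ only goes the wrong way, so instead I will use the \emph{good-$\lambda$} comparison between $I_\alpha$ and $\M_\alpha$ (or, more precisely, a Welland-type pointwise inequality $|I_\alpha x(k)| \lesssim (\M x(k))^{1-\alpha}(\M_{\alpha'} x(k))^{\cdots}$ or the splitting of $I_\alpha$ into a local and a global part); (b) Theorem \ref{t3.1}, the strong $(p,q)$ bound for $\M_\alpha$ on the weighted spaces $l^p_{\omega^p}\to l^q_{\omega^q}$; and (c) Proposition \ref{m2.13}, which translates $\omega\in\huaA(p,q)$ into $\omega^q\in\huaA_q$ and $\omega^p\in\huaA_p$, together with Proposition \ref{m2.11} (the $\huaA_\infty$ absolute-continuity estimate).

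First I would reduce to $x\ge 0$ by positivity of the kernel, and by Lemma \ref{l3.11} (the layer-cake formula) it suffices to control $\lambda^q\,\omega^q(\{k:I_\alpha x(k)>\lambda\})$ appropriately after integration in $\lambda$. The core step is a \emph{good-$\lambda$ inequality} of the form
\begin{equation*}
\omega^q\bigl(\{k: I_\alpha x(k)>2\lambda,\ \M_\alpha x(k)\le \gamma\lambda\}\bigr)\le \epsilon(\gamma)\,\omega^q\bigl(\{k: I_\alpha x(k)>\lambda\}\bigr),
\end{equation*}
with $\epsilon(\gamma)\to 0$ as $\gamma\to 0$. To prove this, for each fixed $\lambda$ I decompose the open level set $O_\lambda:=\{k:I_\alpha x(k)>\lambda\}$ (a subset of $\Z$) into ``maximal dyadic-type'' pieces using Lemma \ref{l3.2} — this is exactly where the discrete Whitney-type decomposition enters, giving a disjoint family of intervals $\{J_i\}$ with $O_\lambda=\bigcup_i J_i$ and with the distance from $J_i$ to $O_\lambda^c$ comparable to $|J_i|$. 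On each such $J_i$ one splits $I_\alpha x = I_\alpha(x\kaf_{cJ_i}) + I_\alpha(x\kaf_{(cJ_i)^c})$; the far part is essentially constant on $J_i$ and is controlled at a fixed comparison point of $J_i$ lying just outside $O_\lambda$, hence by $\lambda$; the near part is handled by the weak-type estimate for $I_\alpha$ restricted to $cJ_i$ together with the size bound $\sum_{k\in cJ_i}x(k)\lesssim |cJ_i|^{1-\alpha}\M_\alpha x(k_0)\lesssim \gamma\lambda|cJ_i|^{1-\alpha}$ coming from the assumption $\M_\alpha x\le\gamma\lambda$ somewhere on $J_i$ (if $J_i$ contains no such point the piece contributes nothing). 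This yields $|\{k\in J_i: \text{near part}>\lambda\}|\lesssim (\gamma)^{\kappa}|J_i|$ for some $\kappa>0$, and then Proposition \ref{m2.11} applied to the weight $\omega^q\in\huaA_\infty$ converts the Lebesgue-measure smallness into $\omega^q$-measure smallness $\le\epsilon(\gamma)\,\omega^q(J_i)$. Summing over $i$ gives the good-$\lambda$ inequality.

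Finally, integrating the good-$\lambda$ inequality against $q\lambda^{q-1}\,d\lambda$ and using Lemma \ref{l3.11} twice gives
\begin{equation*}
\|I_\alpha x\|_{l^q_{\omega^q}}^q \le C\gamma^{-q}\,\epsilon(\gamma)\,\|I_\alpha x\|_{l^q_{\omega^q}}^q + C\,\|\M_\alpha x\|_{l^q_{\omega^q}}^q;
\end{equation*}
choosing $\gamma$ small enough that $C\gamma^{-q}\epsilon(\gamma)<\tfrac12$ — which is possible because $\epsilon(\gamma)\to0$ faster than any power, being the $\huaA_\infty$ gain — we absorb the first term, \emph{provided} we know a priori that $\|I_\alpha x\|_{l^q_{\omega^q}}<\infty$; this is arranged by first proving the estimate for finitely supported $x$ (where finiteness is automatic via Proposition \ref{l3.15}) and then passing to the limit by Fatou. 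What remains is then exactly $\|I_\alpha x\|_{l^q_{\omega^q}}\le C\|\M_\alpha x\|_{l^q_{\omega^q}}\le C\|x\|_{l^p_{\omega^p}}$ by Theorem \ref{t3.1}. The main obstacle, and the place where the discrete setting genuinely differs from $\R^n$, is the decomposition step: one must check that Lemma \ref{l3.2} produces intervals whose dilates $cJ_i$ still have controlled overlap and still touch $O_\lambda^c$, so that the ``constant on the far piece'' heuristic is quantitatively valid; getting the near-part weak-type estimate with the correct power of $\gamma$ (uniform in $i$ and $\lambda$) is the technical heart of the argument.
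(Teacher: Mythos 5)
Your proposal follows essentially the same route as the paper: the paper packages your good-$\lambda$ comparison into Lemma \ref{l3.16} (local/global splitting of $I_\alpha x$ on each piece of the Whitney-type decomposition from Lemma \ref{l3.2}, the weak-type bound for the local part, and the $\huaA_\infty$ conversion via Proposition \ref{m2.11}) and Lemma \ref{l3.12}, then concludes exactly as you do by combining with Theorem \ref{t3.1}. The only adjustment needed is quantitative: on each Whitney piece the far part is bounded by $3a$ rather than $a$, so the threshold in the good-$\lambda$ inequality must be $ab$ with $b\ge 6$ (as the paper takes) rather than $2\lambda$.
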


Now, we use the methods of Muckenhoupt and Wheeden \cite{1974m} to prove Theorem \ref{t1.1}, and we
additionally require the following Lemma \ref{l3.2} which can be seen as a discrete version of Whitney decomposition theorem for sets.

\begin{lemma}(Whitney decomposition theorem)\label{l3.2}
Let $E$ be a set of non-empty integers. Then there exists a list of disjoint symmetric integers intervals $\{S_{m_j,N_j}\}_j$, which satisfies
\begin{equation*}
E=\mathop{\bigcup}\limits_jS_{m_j,N_j}\quad and\quad 4S_{m_j,N_j}\bigcap E^c\neq\varnothing.
\end{equation*}
\end{lemma}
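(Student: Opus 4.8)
The plan is to mimic the classical Whitney decomposition of an open set into dyadic cubes, but adapted to the fact that $E\subset\Z$ is \emph{already} a union of singletons and that the ambient "cubes" here are the symmetric discrete intervals $S_{m,N}$. First I would dispose of a trivial reduction: if $E=\Z$ itself (or, more generally, if $E$ is cofinite enough that no dilate escapes), the statement as phrased requires $4S_{m_j,N_j}\cap E^c\neq\varnothing$, so one must read the hypothesis "$E$ a set of non-empty integers" together with the implicit assumption that $E\neq\Z$ (equivalently $E^c\neq\varnothing$); I would state this at the outset. Granting $E^c\neq\varnothing$, for each $k\in E$ define its "depth"
\begin{equation*}
d(k):=\operatorname{dist}(k,E^c)=\min\{|k-j|:j\in E^c\}\in\Z_+,
\end{equation*}
which is finite and at least $1$. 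The idea is that a point at depth $d$ should be covered by a discrete interval of radius comparable to $d$ but strictly smaller, so that its $4$-fold dilate reaches $E^c$.

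The construction proceeds by assigning to each $k\in E$ a candidate interval $S_{k,N(k)}$ with $N(k):=\lfloor (d(k)-1)/C_0\rfloor$ for a fixed small absolute constant (I expect $C_0=4$ or so to work): this guarantees on one hand $S_{k,N(k)}\subset E$ (since every point within $N(k)$ of $k$ still has positive distance to $E^c$, as $N(k)<d(k)$), and on the other hand $4S_{k,N(k)}=S_{k,4N(k)}$ with $4N(k)\ge d(k)$ — wait, more carefully, $4N(k)$ should be $\ge d(k)$ so that the dilate contains a point of $E^c$; choosing $N(k)=\lfloor (d(k)-1)/4\rfloor$ one checks $4N(k)\le d(k)-1<d(k)$ keeps $4S_{k,N(k)}$ possibly \emph{not} reaching $E^c$, so the constant must be tuned the other way: take $N(k)$ as large as possible subject to $S_{k,N(k)}\subset E$, i.e. $N(k)=d(k)-1$. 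Then $S_{k,N(k)}\subset E$ by definition of $d$, while $4S_{k,N(k)}=S_{k,4(d(k)-1)}\supset S_{k,d(k)}$ meets $E^c$ as long as $4(d(k)-1)\ge d(k)$, i.e. $d(k)\ge 4/3$, i.e. $d(k)\ge 2$; the finitely many points with $d(k)=1$ get the interval $S_{k,0}=\{k\}$, whose $4$-dilate is $S_{k,3}\ni$ a neighbour in $E^c$. So every $k\in E$ lies in some admissible interval $I_k:=S_{k,N(k)}$ with $I_k\subset E$ and $4I_k\cap E^c\neq\varnothing$, and hence $E=\bigcup_{k\in E}I_k$.

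The remaining — and genuinely delicate — step is to extract from the cover $\{I_k\}_{k\in E}$ a \emph{disjoint} subfamily still covering $E$, with each chosen interval still satisfying the $4$-dilate condition. Here I would run a greedy/maximality selection: among all admissible intervals $S_{m,N}$ (those contained in $E$ with $4S_{m,N}\cap E^c\neq\varnothing$) choose a maximal disjoint subcollection by repeatedly picking, in a left-to-right sweep through $\Z$, the admissible interval with the smallest left endpoint not yet covered and, among those, the one of largest radius. One must verify (i) the chosen intervals are pairwise disjoint — immediate from the sweep; (ii) they cover all of $E$ — if some $k\in E$ were uncovered, its interval $I_k$ would overlap a previously chosen interval $S_{m_j,N_j}$, and a comparison of depths ($d$ varies by at most $1$ between neighbours, so overlapping admissible intervals have comparable radii) forces $k$ to actually lie inside $S_{m_j,N_j}$ or inside the next chosen interval, a contradiction; (iii) admissibility (the $4$-dilate condition) is preserved since it is a property of each interval individually, not of the family. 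The main obstacle is step (ii): controlling how much two overlapping admissible symmetric intervals can fail to contain each other, which is exactly where the slowly-varying nature of $d(\cdot)$ (Lipschitz constant $1$) must be exploited, and where in the continuous Whitney lemma one uses that nearby cubes have comparable sidelengths. I would isolate this as a short sublemma: if $S_{m,N}$ and $S_{m',N'}$ are admissible and intersect, then $N$ and $N'$ differ by a bounded factor, and consequently one of them is contained in a bounded dilate of the other; then absorb the bounded overcount, if any, by a final pruning so that the surviving family is genuinely disjoint and still covers, completing the proof.
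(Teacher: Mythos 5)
Your reduction to the case $E^c\neq\varnothing$ and your construction of the initial cover are fine: with $d(k):=\dist(k,E^c)$ and $N(k):=d(k)-1$, each $I_k:=S_{k,N(k)}$ is contained in $E$ and $4I_k$ meets $E^c$, so $E=\bigcup_{k\in E}I_k$. The gap is the disjointification step, which you yourself flag as delicate and then do not close. Concretely: (1) the selection rule ``the admissible interval with the smallest left endpoint not yet covered and, among those, the one of largest radius'' is not well defined when $E$ contains a maximal half-infinite run $\{i_0,i_0+1,\dots\}$ --- every $S_{i_0+N,N}$ with $N\ge 1$ lies in $E$ and has $4$-dilate containing $i_0-1\in E^c$, so there is no largest admissible radius (and if you restrict to your family $\{I_k\}$, all of its members on such a run share the left endpoint $i_0$, so a disjoint subfamily contains at most one of them and cannot cover the run); (2) a left-to-right sweep through $\Z$ has no starting point when $E$ is unbounded below, e.g.\ $E=\Z\setminus\{0\}$; (3) the closing ``final pruning'' cannot be waved through: shrinking an interval to restore disjointness can destroy both its symmetry and its admissibility, since a symmetric subinterval $S_{m',N'}\subset S_{m,N}$ need not satisfy $4S_{m',N'}\cap E^c\neq\varnothing$. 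As written, the argument therefore does not produce the required disjoint family.

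The paper sidesteps all of this by using the one-dimensional discrete structure directly: $E$ is first written as the disjoint union of its maximal runs of consecutive integers (singletons, finite runs, and runs infinite in one direction), each of which is adjacent to a point of $E^c$, and then each run is partitioned explicitly into symmetric intervals --- a finite run of odd cardinality is already a single $S_{m,N}$, one of even cardinality is split into two odd pieces, and a half-infinite run $\{i_0,i_0+1,\dots\}$ is partitioned into consecutive intervals with $N_j=2^j$ and centers at distance $d_j=3\cdot 2^j+j-5$ from $i_0$, so that $4N_j-d_j>0$ and hence $4S_{m_j,N_j}\ni i_0-1\in E^c$. Because maximal runs are automatically pairwise disjoint, no Vitali-type selection, depth comparison, or pruning is needed. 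If you organize your construction run by run, your distance-function argument collapses to essentially this proof; as it stands, the selection and pruning step is where it fails.
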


\begin{proof}
Any set of non-empty integers $E$ can be written as the disjoint union of at most countable $E_1$ sets, countable $E_2$ sets, one $E_3$ set and one $E_4$ set, where $E_1$ set is a single integer set like $\{i_0\}$ satisfying $i_0\in E$ but $i_0-1,i_0+1\notin E$, $E_2$ set is the finitely continuous integers set like $\{n,\,n+1,\,\cdots,\,n+m\}$ satisfying $n-1, n+m+1\notin E$, $E_3$ is an infinitely set like $\{i_0,\,i_0+1,\,i_0+2,\,\cdots\}$ satisfying $i_0-1\notin E$ and $E_4$ is an infinitely set like $\{j_0,\,j_0-1,\,j_0-2,\,\cdots\}$ satisfying $j_0+1\notin E$, $n, m, i_0, j_0\in\Z$. Then we only need to decompose $E_1$, $E_2$, $E_3$ and $E_4$ separately.

For $E_1$ set, we obtain that $E_1=\{i_0\}=S_{i_0, 0}$ and $4S_{i_0,0}=\{i_0-3,\,i_0-2,\,i_0-1,\,i_0,\,i_0+1,\,i_0+2,\,i_0+3\}$, then it's obvious that $4S_{i_0,0}\cap {E}^c\neq\varnothing$.

For $E_2$ set, we consider the following two cases.

Case I: If $|E_2|=m+1$ is odd, then the set itself is an interval of symmetric integers, denoted as $S_{m,N}$ and obviously, $4S_{m,N}\cap {E}^c\neq\varnothing$.

Case II: If $|E_2|=m+1$ is even, then there exist two odd numbers $k_1$ and $k_2$ such that $k_1+k_2=|E_2|$ and $E_2$ can be splitted  into two symmetric integer intervals $E_{2,1}$ and $E_{2,2}$ with $|E_{2,1}|=k_1$, $|E_{2,2}|=k_2$. Then repeat case I.

For $E_3$ set,  we can decompose this set into a union of disjoint symmetric intervals of cardinalities $2N_j+1$, $j\in\Z_+$, i.e.,
\begin{equation*}
E_3:=\mathop{\bigcup}\limits_{j\in\Z_+}S_{m_j,N_j}=\mathop{\bigcup}\limits_{j\in\Z_+}\{m_j-N_j,\,\cdots,\,m_j,\,\cdots,\,m_j+N_j\},
\end{equation*}
where
\begin{equation*}
\begin{aligned}
&m_j:=i_0+2\sum\limits_{i=1}^{j-1}2^{j-i}+(j-1)+2^j=i_0+3\cdot 2^j+j-5;\\
&N_j:=2^j;\\
&d_j:=|m_j-i_0|=2\sum\limits_{i=1}^{j-1}2^{j-i}+(j-1)+2^j=3\cdot 2^j+j-5;\\
&4N_j-d_j=2^j-j+5=2^j-(j-1)+4>0.
\end{aligned}
\end{equation*}
This implies that
\begin{equation*}
E_3=\mathop{\bigcup}\limits_jS_{m_j,N_j}\quad\text{and}\quad 4S_{m_j,N_j}\bigcap {E}^c\neq\varnothing,\quad j\in\Z_+.
\end{equation*}

The decompose for $E_4$ set is similar to that for $E_3$ set, the details being omitted. We finish the proof of Lemma \ref{l3.2}.
\end{proof}

\begin{lemma}\label{l3.16}
If $0<\alpha<1$, there exists a positive constant $K$ depending only on $\alpha$ such that if $a>0, b\ge 6, c>0$, $x$ is nonnegative sequence, $S\subset\Z$ is a symmetric interval such that $I_\alpha x(k)\le a$ at some point of $S$ and $E:=\{k\in S:I_\alpha x(k)>ab,\M_\alpha x(k)\le ac\}$ is the subset of $S$, then $\left|E\right|\le K\left|S\right|\left(\frac{c}{b}\right)^\frac{1}{1-\alpha}$.
\end{lemma}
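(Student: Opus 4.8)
The plan is to follow the classical "good-$\lambda$" / Whitney-decomposition argument of Muckenhoupt--Wheeden \cite{1974m}, adapted to the discrete line. The geometric heart of the estimate is the splitting of the Riesz potential at a point $k\in E$ into a "local" part (indices $i$ near $S$) and a "far" part (indices $i$ away from $S$), controlled respectively by $\M_\alpha x$ on a dilate of $S$ and by the value $a$ coming from the hypothesis that $I_\alpha x\le a$ somewhere on $S$. First I would fix $k_0\in S$ with $I_\alpha x(k_0)\le a$, write $S=S_{m,N}$, and for $k\in E$ decompose
\begin{equation*}
I_\alpha x(k)=\sum_{i\in 2S\setminus\{k\}}\frac{x(i)}{|k-i|^{1-\alpha}}+\sum_{i\notin 2S}\frac{x(i)}{|k-i|^{1-\alpha}}=:\mathrm{I}(k)+\mathrm{II}(k),
\end{equation*}
where $2S=2S_{m,N}$ (the double interval). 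The far part $\mathrm{II}(k)$ I would compare to the corresponding far part at $k_0$: since $k,k_0\in S$ and $i\notin 2S$, the ratio $|k-i|/|k_0-i|$ is bounded above and below by absolute constants (here the hypothesis $b\ge 6$, i.e. enough room around $S$, is what makes the comparison constants clean), so $\mathrm{II}(k)\le C\,\mathrm{II}(k_0)\le C\,I_\alpha x(k_0)\le Ca$. Thus on $E$ the far part contributes at most $Ca$ to $I_\alpha x(k)$.

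Next, because $k\in E$ forces $I_\alpha x(k)>ab$, once $b$ is taken larger than the constant $C$ above we get $\mathrm{I}(k)>ab-Ca\ge \tfrac{ab}{2}$ for all $k\in E$; so the local part alone exceeds $\tfrac{ab}{2}$ on $E$. Now I would estimate the size of $E$ via a weak-type bound for the truncated local Riesz potential. Writing $y:=x\cdot\kaf_{2S}$, we have $\mathrm{I}(k)=I_\alpha(|y|)(k)$ for $k\in S$ (modulo the single missing term $i=k$, which is harmless), and $y\in l^1$ with $\|y\|_{l^1}=\sum_{i\in 2S}|x(i)|$. By Lemma \ref{c3.5}(ii) (the weak $(1,\tfrac{1}{1-\alpha})$ bound for $I_\alpha$ on $l^1$),
\begin{equation*}
|E|\le\left|\left\{k\in\Z:I_\alpha(|y|)(k)>\tfrac{ab}{2}\right\}\right|\le C\left(\frac{\|y\|_{l^1}}{ab}\right)^{\frac{1}{1-\alpha}}=C\left(\frac{\sum_{i\in 2S}|x(i)|}{ab}\right)^{\frac{1}{1-\alpha}}.
\end{equation*}
Finally I invoke the remaining hypothesis: $E$ is nonempty only if $\M_\alpha x(k)\le ac$ at points of $S\subset 2S$ — more precisely, for any $k\in E\subset S$ the cube $S_{k,2N}\supset 2S_{m,N}$, so $\M_\alpha x(k)\ge |S_{k,2N}|^{\alpha-1}\sum_{i\in 2S}|x(i)|\ge c_d|2S|^{\alpha-1}\sum_{i\in 2S}|x(i)|$, whence $\sum_{i\in 2S}|x(i)|\le C|S|^{1-\alpha}\M_\alpha x(k)\le C|S|^{1-\alpha}ac$. (If $E=\varnothing$ the claimed inequality is trivial.) Substituting this into the displayed bound for $|E|$ yields
\begin{equation*}
|E|\le C\left(\frac{|S|^{1-\alpha}ac}{ab}\right)^{\frac{1}{1-\alpha}}=C|S|\left(\frac{c}{b}\right)^{\frac{1}{1-\alpha}},
\end{equation*}
which is the assertion with $K=K(\alpha)$.

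The main obstacle I anticipate is the bookkeeping in the far-part comparison $\mathrm{II}(k)\le C\,\mathrm{II}(k_0)$: one must verify that the comparison constant depends only on $\alpha$ (not on $N$, $m$, or $k_0$), which is exactly where the discreteness and the hypothesis $b\ge 6$ enter, since for small intervals the "double" dilate $2S$ must still leave enough separation between $S$ and $(2S)^c$ for the ratio $|k-i|/|k_0-i|$ to be uniformly controlled; a secondary nuisance is handling the omitted diagonal term $i=k$ when passing between $\mathrm{I}(k)$ and $I_\alpha(|y|)(k)$, which is absorbed harmlessly since that term is $+\infty$-free (it is simply not present) and only helps the inequality direction we need.
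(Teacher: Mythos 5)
Your proposal is correct and follows essentially the same route as the paper: the paper sets $g=x\kaf_{2S}$, $h=x-g$ (your local/far split $\mathrm{I}$, $\mathrm{II}$), bounds $I_\alpha h(k)\le 3\,I_\alpha x(s)\le 3a\le \frac{ab}{2}$ for $b\ge 6$ using the comparison $|s-i|\le 3|k-i|$, applies the weak $(1,\frac{1}{1-\alpha})$ bound of Lemma \ref{c3.5}(ii) to $g$, and controls $\sum_{i\in 2S}x(i)\le Cac|S|^{1-\alpha}$ via $\M_\alpha x(t)\le ac$ at a point $t$ of $E$ exactly as you do (your $S_{k,2N}$ should be $S_{k,3N}$, a harmless constant).
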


\begin{proof}
Let $g(k):=x(k)$ on $2S$ and 0 elsewhere. Set $h(k):=x(k)-g(k)$ and there exists a $t\in S$ such that $\M_\alpha x(t)\le ac$. By Lemma \ref{c3.5}$\rm(ii)$, for any positive $a$ and $b$, we have
\begin{equation*}\label{eq3.16}
\left|\left\{k\in\Z: I_\alpha g(k)>\frac{ab}{2}\right\}\right|\le C\left(\frac{1}{ab}\sum_{k\in \Z}g(k)\right)^{\frac{1}{1-\alpha}}.
\end{equation*}
Let $S:=S_{m,N}$, $m\in\Z, N\in\N$, $J$ be the symmetric interval centered at $t$ and three times as long as those of $S$. Then we have $2S\subset J$, $J\subset 4S$ and
\begin{equation*}
\sum_{k\in\Z}g(k)\le \sum_{k\in J}x(k)\le \M_\alpha (t)\left|J\right|^{1-\alpha}\le ac\left|4S\right|^{1-\alpha}=ac(8N+1)^{1-\alpha}\le 4^{1-\alpha}ac\left|S\right|^{1-\alpha}.
\end{equation*}
From this, it follows that
\begin{equation}\label{eq3.17}
\left|\left\{k\in\Z: I_\alpha g(k)>\frac{ab}{2}\right\}\right|\le C\left|S\right|\left(\frac{c}{b}\right)^\frac{1}{1-\alpha}.
\end{equation}
Now let $s$ be the point of $S$ such that $|I_\alpha x(s)|\le a$. If $k\in S$ and $i\notin 2S$, then
\begin{equation*}
\left|s-i\right|\le |s-k|+|k-i|\le 2|k-i|+\left|k-i\right|\le 3|k-i|=:L|k-i|.
\end{equation*}
Therefore, for every $k\in S$, we have
\begin{equation*}
\begin{aligned}
I_\alpha h(k)=\sum\limits_{i\neq k}\frac{h(i)}{\left|k-i\right|^{1-\alpha}}
\le L^{1-\alpha}\sum\limits_{i\neq s}\frac{h(i)}{\left|s-i\right|^{1-\alpha}}
\le LI_\alpha x(s)
\le La.
\end{aligned}
\end{equation*}
Let $B=2L$. If $b\ge 6$, then for any $k\in S$, we obtain $I_\alpha h(k)\le \frac{ab}{2}$.
Thus, for any $k\in E$, we have
\begin{equation*}
I_\alpha g(k)=\sum\limits_{i\neq k}\frac{x(i)-h(i)}{|i-k|^{1-\alpha}}=I_\alpha x(k)-I_\alpha h(k)>ab-\frac{ab}{2}=\frac{ab}{2},
\end{equation*}
which together with (\ref{eq3.17}) implies that
\begin{equation*}
\left|E\right|\le\left|\left\{k\in\Z: I_\alpha g(k)>\frac{ab}{2}\right\}\right|\le C\left|S\right|\left(\frac{c}{b}\right)^\frac{1}{1-\alpha}=:K\left|S\right|\left(\frac{c}{b}\right)^\frac{1}{1-\alpha}.
\end{equation*}
We finish the proof of Lemma \ref{l3.16}.
\end{proof}

The next lemma will reveal another relation between discrete Riesz potentials $I_\alpha$ and discrete fractional maximal operator $\M_\alpha$. Using Lemmas \ref{l3.2}, \ref{l3.16} and \ref{l3.11}, following lemma can be proved. Continuous version of Lemma \ref{l3.12}
can be found in \cite{1974m} and for the sake of the completeness, the proofs are included.

\begin{lemma}\label{l3.12}
Let $0<\alpha<1$ and $0< q<\infty$. If $\omiga\in\huaA_\infty$ and $x=\{x(k)\}_{k\in\Z}\subset\R$, then there exists a positive constant C such that
\begin{equation*}
\sum_{k\in\Z}\left|I_\alpha x(k)\right|^q\omiga(k)\le C\sum_{k\in\Z}|\M_\alpha x(k)|^q\omiga(k).
\end{equation*}
\end{lemma}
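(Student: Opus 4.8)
The plan is to adapt Muckenhoupt--Wheeden's good-$\lambda$ inequality argument \cite{1974m} to the discrete setting, using the discrete Whitney decomposition from Lemma \ref{l3.2} in place of its continuous analogue. The first step is to reduce to the case $x\geq 0$, since $|I_\alpha x|\leq I_\alpha(|x|)$ and $\M_\alpha x=\M_\alpha(|x|)$; also we may assume $\|\M_\alpha x\|_{l^q_\omiga}<\infty$, as otherwise there is nothing to prove, and a routine truncation/limiting argument reduces to $x$ finitely supported so that $I_\alpha x$ is finite everywhere. The goal is then to prove a \emph{good-$\lambda$ inequality}: there exist constants $b\geq 6$ and, for each small $c>0$, a constant depending only on $\alpha$, such that for all $a>0$,
\begin{equation*}
\omiga\big(\{k\in\Z: I_\alpha x(k)>ab,\ \M_\alpha x(k)\leq ac\}\big)\leq \eta(c)\,\omiga\big(\{k\in\Z: I_\alpha x(k)>a\}\big),
\end{equation*}
where $\eta(c)\to 0$ as $c\to 0$.

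To establish this, I would fix $a>0$ and apply Lemma \ref{l3.2} to the set $E_a:=\{k\in\Z: I_\alpha x(k)>a\}$ (which is a proper subset of $\Z$ once $a$ is large, and for small $a$ the inequality can be arranged trivially or by a separate elementary bound), obtaining disjoint symmetric intervals $\{S_{m_j,N_j}\}_j$ with $E_a=\bigcup_j S_{m_j,N_j}$ and $4S_{m_j,N_j}\cap E_a^c\neq\varnothing$. The Whitney property guarantees that each $S_{m_j,N_j}$ contains (via the point of $4S_{m_j,N_j}$ lying in $E_a^c$, after enlarging the interval by the fixed factor $4$) a point where $I_\alpha x\leq a$, so Lemma \ref{l3.16} applies on each (suitably enlarged) $S_{m_j,N_j}$: with $S=4S_{m_j,N_j}$ we get
\begin{equation*}
\big|\{k\in 4S_{m_j,N_j}: I_\alpha x(k)>ab,\ \M_\alpha x(k)\leq ac\}\big|\leq K\,|4S_{m_j,N_j}|\Big(\tfrac{c}{b}\Big)^{\frac{1}{1-\alpha}}\leq K'\,|S_{m_j,N_j}|\Big(\tfrac{c}{b}\Big)^{\frac{1}{1-\alpha}}.
\end{equation*}
Since the exponent $\tfrac{1}{1-\alpha}>1$, by choosing $c$ small we make the ratio $|\{\dots\}|/|S_{m_j,N_j}|$ as small as we like, uniformly in $j$; then Proposition \ref{m2.11} ($\omiga\in\huaA_\infty$) converts this relative-measure smallness into $\omiga(\{\dots\}\cap S_{m_j,N_j})\leq \epsilon\,\omiga(S_{m_j,N_j})$. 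Summing over the disjoint $j$ and using $\bigcup_j S_{m_j,N_j}=E_a$ yields the good-$\lambda$ inequality with $\eta(c)=\epsilon(c)\to 0$.

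Finally, I would integrate the good-$\lambda$ inequality against $q\,\lambda^{q-1}\,d\lambda$ (with $\lambda=ab$, $a=\lambda/b$), invoking Lemma \ref{l3.11} to rewrite both $\|I_\alpha x\|_{l^q_\omiga}^q$ and $\|\M_\alpha x\|_{l^q_\omiga}^q$ as such integrals: a standard change of variables gives
\begin{equation*}
\|I_\alpha x\|_{l^q_\omiga}^q\leq b^q\,\eta(c)\,\|I_\alpha x\|_{l^q_\omiga}^q + \Big(\tfrac{b}{c}\Big)^q\|\M_\alpha x\|_{l^q_\omiga}^q,
\end{equation*}
and, choosing $c$ small enough that $b^q\eta(c)<\tfrac12$, the first term on the right absorbs into the left (this is where finiteness of $\|I_\alpha x\|_{l^q_\omiga}$, arranged by the initial truncation, is essential), giving $\|I_\alpha x\|_{l^q_\omiga}^q\leq C\|\M_\alpha x\|_{l^q_\omiga}^q$. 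The main obstacle is the bookkeeping around the Whitney intervals: unlike the continuous Whitney cubes, the discrete intervals of Lemma \ref{l3.2} only satisfy $4S_{m_j,N_j}\cap E^c\neq\varnothing$ (rather than a two-sided comparability of the distance to $E^c$), so one must verify carefully that enlarging by the fixed factor $4$ supplies the ``point where $I_\alpha x\leq a$'' needed to invoke Lemma \ref{l3.16}, and that the overlap of the enlarged intervals $\{4S_{m_j,N_j}\}_j$ is bounded so the summation over $j$ does not lose control of the constant. Handling the small-$a$ regime (where $E_a$ may equal all of $\Z$ and the decomposition degenerates) also requires a brief separate argument, e.g. a direct weak-type bound or noting that the inequality is vacuous when the right-hand side is infinite.
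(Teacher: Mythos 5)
Your overall strategy is the same as the paper's: reduce to nonnegative, compactly supported $x$, run a good-$\lambda$ argument by applying the discrete Whitney decomposition (Lemma \ref{l3.2}) to $\{k\in\Z:I_\alpha x(k)>a\}$, invoke Lemma \ref{l3.16} on the dilated intervals $4S_{m_j,N_j}$, convert the resulting relative-cardinality estimate into a weighted one via Proposition \ref{m2.11}, and integrate against $a^{q-1}\,da$ using Lemma \ref{l3.11}. (Your worry about bounded overlap of the $4S_{m_j,N_j}$ is unnecessary: the exceptional sets only need to be intersected with the pairwise disjoint $S_{m_j,N_j}$ before summing, so no overlap enters.)

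There is, however, one genuine gap, in the final absorption step. You claim that truncating $x$ to finite support makes $\|I_\alpha x\|_{l^q_\omiga}$ finite, and you rely on this to subtract $b^q\eta(c)\|I_\alpha x\|^q_{l^q_\omiga}$ from both sides. This is false in general: take $\omiga\equiv 1$ and $x$ the indicator of $\{0\}$; then $I_\alpha x(k)=|k|^{\alpha-1}$ and $\sum_{k\neq 0}|k|^{(\alpha-1)q}=\infty$ whenever $q\le\frac{1}{1-\alpha}$, which the lemma permits since it allows all $0<q<\infty$. So the quantity you propose to absorb may be $+\infty$ even after truncation, and the subtraction is illegitimate (the inequality $\infty\le\frac12\cdot\infty+\|\M_\alpha x\|^q_{l^q_\omiga}$ yields nothing). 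The paper avoids this with two localizations: it integrates $a$ only over a finite range $[0,N]$, and, using the pointwise bound $I_\alpha x(k)\le 4^{1-\alpha}\M_\alpha x(k)$ for $k\notin 3S$ (with $S\supset\supp x$), it replaces the term to be absorbed by $\frac{b^{-q}}{2}\,\omiga(\{k\in\Z:I_\alpha x(k)>a\}\cap 3S)$, whose integral over $[0,N]$ is at most $\frac{N^q}{q}\cdot\frac{b^{-q}}{2}\,\omiga(3S)<\infty$. Only then is the subtraction valid; one then lets $N\to\infty$ and finally removes the truncation by monotone convergence. With this extra localization supplied, your argument coincides with the paper's proof.
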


\begin{proof}
We may assume that $x=\{x(k)\}_{k\in\Z}$ is nonnegative and supported in some symmetric interval $S$ or else replacing $x$ by $|x|=\{|x(k)|\kaf_{\{|k|\le M\}}(k)\}_{k\in\Z}, M\in\Z_+$. Given $a>0$, by Lemma \ref{l3.2}, we can decompose the set $\{k\in\Z:I_\alpha x(k)>a\}$ into a list of disjoint symmetric integers intervals $\{S_{m_j,N_j}\}_j$ and for every $j$, we have $I_\alpha x(k)\le a$ at some point $k$ of $4S_{m_j,N_j}$.

Let $0<\alpha<1$. For $4S_{m_j,N_j}$, by Lemma \ref{l3.16}, there exists $K>0$ depending on $\alpha$ such that for any $a>0$, $b\ge 6$, and $c>0$, and $E_j:=\{k\in 4S_{m_j,N_j}: I_\alpha x(k)>ab, \M_\alpha x(k)\le ac\}$, we have
\begin{equation}\label{eq2.8}
|E_j|\le K|4S_{m_j,N_j}|\left(\frac{c}{b}\right)^{\frac{1}{1-\alpha}}.
\end{equation}
Besides, for $\omiga\in\huaA_\infty$, by Proposition \ref{m2.11} with $\epsilon=\frac{b^{-q}}{2}$, there exists $\delta>0$ such that for any subset $S\subset\text{interval}\, J\subset\Z$ with $|S|\le\delta |J|$, it holds true that $\omiga(S)\le\epsilon \omiga(J)$. Choosing $D>0$ such that $\delta=4K\left(\frac{D}{b}\right)^{\frac{1}{1-\alpha}}$, and let $c\in (0,D]$. From this and (\ref{eq2.8}), it follows that

%Let $b=6, a>0$ and $c>0$ to be fixed later. By Lemma \ref{l3.16} with $a$, $b$ and $c$, we obtain that exists $K$, if let $\delta$ correspond $\epsilon=\frac{6^{-q}}{2}$, by using Proposition \ref{m2.11} with $\omiga\in\huaA_\infty$, choose $D>0$ such that $\delta=4K\left(\frac{D}{6}\right)^{\frac{1}{1-\alpha}}$, and let $c$ satisfy $0<c\le D$, $E_j:=\{k\in 4S_{m_j,N_j}:I_\alpha x(k)>6a, \M_\alpha x(k)\le ac\}$. Then we have

\begin{equation*}
\left|E_j\right|\le K\left|4S_{m_j,N_j}\right|\left(\frac{c}{b}\right)^\frac{1}{1-\alpha}<\delta \left|S_{m_j,N_j}\right|,
\end{equation*}
which together with Proposition \ref{m2.11} implies that
\begin{equation}\label{eq1.1}
\omiga(E_j)\le \frac{b^{-q}}{2}\omiga(S_{m_j,N_j}).
\end{equation}
Let
\begin{equation*}
\begin{aligned}
A_1:&=\{k\in\Z:I_\alpha x(k)>ab\},\\
A_2:&=\{k\in\Z:I_\alpha x(k)>a\},\\
B_1:&=\{k\in\Z:\M_\alpha x(k)\le ac\}.
\end{aligned}
\end{equation*}
Since $A_1\cap B_1\subset A_1\subset A_2$, for any $j$, by (\ref{eq1.1}), we have
\begin{equation*}
\omiga(A_1\cap B_1)\le \frac{b^{-q}}{2}\omiga(A_2).
\end{equation*}
This implies that
\begin{equation}\label{eq1.2}
\begin{aligned}
\omiga(A_1)&=\omiga(A_1\cap B_1)+\omiga(A_1\cap (B_1)^c)\le\omiga(A_1\cap B_1)+\omiga((B_1)^c)\\
&\le \frac{b^{-q}}{2}\omiga(A_2)+\omiga((B_1)^c).
\end{aligned}
\end{equation}

Given $k\notin 3S$, let $s$ be the point in $S$ closest to $k$ and let $J$ be the smallest symmetric interval with center at $k$ and contains $S$. Then we have $|J|\le 4|k-s|$ and
\begin{equation*}
I_\alpha x(k)=\sum\limits_{i\in S}\frac{x(i)}{|i-k|^{1-\alpha}}\le \left|k-s\right|^{\alpha-1}\sum_{i\in S}x(i)\le \frac{\left|J\right|^{1-\alpha}}{\left|k-s\right|^{1-\alpha}}\M_\alpha x(k)\le 4^{1-\alpha}\M_\alpha x(k).
\end{equation*}
Define $c=\min\left(D,\frac{1}{4^{1-\alpha}}\right)$, then we have
\begin{equation}\label{eq3.19}
\{k\in\Z:I_\alpha x(k)>a\}\cap(3S)^c\subset\{k\in\Z:\M_\alpha x(k)>ac\}.
\end{equation}
From (\ref{eq1.2}) and (\ref{eq3.19}), it follows that
\begin{equation}\label{eq4.1}
\begin{aligned}
\omiga(A_1)&\le\omiga((B_1)^c)+\frac{b^{-q}}{2}\omiga(A_2)=\omiga((B_1)^c)+\frac{b^{-q}}{2}\omiga(A_2\cap 3S)+\frac{b^{-q}}{2}\omiga(A_2\cap (3S)^c)\\
&\le\omiga((B_1)^c)+\frac{b^{-q}}{2}\omiga(A_2\cap 3S)+\frac{b^{-q}}{2}\omiga((B_1)^c)\\
&\le 2\omiga((B_1)^c)+\frac{b^{-q}}{2}\omiga(A_2\cap 3S).
\end{aligned}
\end{equation}

Next multiply both sides of (\ref{eq4.1}) by $a^{q-1}$ and integrate $a$ from 0 to some positive integer $N$. After a change of variable the left side becomes
\begin{equation}\label{eq3.21}
b^{-q}\int^{bN}_0a^{q-1}\omiga(\{k\in\Z:I_\alpha x(k)>a\})da.
\end{equation}
Similarly, with a change of variables for the first integral on right, the right side becomes
\begin{equation}\label{eq3.22}
2c^{-q}\int^{Nc}_0 a^{q-1}\sum_{\{k\in\Z:\M_\alpha x(k)>a\}}\omiga(k)da+\frac{b^{-q}}{2}\int^N_0 a^{q-1}\sum_{\{k\in\Z: I_\alpha x(k)>a\}\cap 3S}\omiga(k)da.
\end{equation}

Since $\omiga(\{k\in\Z: I_\alpha x(k)>a\}\cap 3S)\le \omiga(3S)<+\infty$, then the second term in (\ref{eq3.22}) is finite and
\begin{equation*}
\int^N_0 a^{q-1}\omiga(\{k\in\Z: I_\alpha x(k)>a\}\cap 3S)da\le\int^{bN}_0a^{q-1}\omiga(\{k\in\Z:I_\alpha x(k)>a\})da.
\end{equation*}
Therefore,
\begin{equation}\label{eq3.23}
\frac{1}{2}b^{-q}\int^{bN}_0 a^{q-1}\omiga(\{k\in\Z:I_\alpha x(k)>a\})da\le 2c^{-q}\int^{Nc}_0 a^{q-1}\omiga(\{k\in\Z:\M_\alpha x(k)>a\})da.
\end{equation}
Letting $N\rightarrow\infty$ on both sides of (\ref{eq3.23}) and using Lemma \ref{l3.11}, it implies that
\begin{equation}\label{eq3.24}
\frac{b^{-q}}{2}\sum_{k\in\Z}\left|I_\alpha x(k)\right|^q\omiga(k)\le 2c^{-q}\sum_{k\in\Z}[\M_\alpha x(k)]^q\omiga(k).
\end{equation}

If $x$ doesn't have compact support, applying $|x|=\{|x(k)|\kaf_{\{|k|\le M\}}(k)\}_{k\in\Z}$ to all of the above processes,
\begin{equation*}
\begin{aligned}
\frac{b^{-q}}{2}\sum_{k\in\Z}[I_\alpha (|x|\kaf_{\{|k|\le M\}})(k)]^q\omiga(k)&\le 2c^{-q}\sum_{k\in\Z}[\M_\alpha (x\kaf_{\{|k|\le M\}})(k)]^q\omiga(k)\\
&\le 2c^{-q}\sum_{k\in\Z}[\M_\alpha x(k)]^q\omiga(k).
\end{aligned}
\end{equation*}
Taking $M\rightarrow\infty$ on above inequality and using monotone convergence theorem, we finish the proof of Lemma \ref{l3.12}.
\end{proof}

\begin{proof}[Proof of Theorem \ref{t1.1}]
For $\omiga\in\huaA(p,q)$, by Proposition \ref{m2.13}$\rm(i)$ and the definition of $\huaA_\infty$, we have $\omiga(k)^q\in\huaA_{1+\frac{q}{p'}}\subset\huaA_\infty$. Thus, from Lemma \ref{l3.12} and Theorem \ref{t3.1}, it follows that (\ref{eq1.3}) holds true.
\end{proof}

\subsection{Estimates for discrete Riesz potentials on discrete weighted Morrey spaces \label{s4.2}}
\begin{theorem}\label{t3.10}
Let $0<\alpha<1$, $1<p<\frac{1}{\alpha}$, $\frac{1}{q}=\frac{1}{p}-\alpha$ and $q<2p$. Set $s=\frac{qp}{2p-q}$ and $\omiga\in\huaA(p,q)$. Then $I_\alpha\in {l{^q_s}}{(\omiga^q)}$ for every $x\in l{^p_q}(\omiga^p,\omiga^q)$ and there exists a positive constant $C$ such that
\begin{equation*}
\|I_\alpha x\|_{l{^q_s}(\omiga^q)}\le C\|x\|_{l{^p_q}(\omiga^p,\omiga^q)}.
\end{equation*}
\end{theorem}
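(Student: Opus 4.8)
The plan is to combine the weighted Lebesgue estimate for $I_\alpha$ (Theorem \ref{t1.1}) with a localization argument adapted to the Morrey scale, in the spirit of the classical proof that fractional integrals map weighted Morrey spaces to weighted Morrey spaces (Komori--Shirai). Fix a symmetric interval $S_{m,N}\subset\Z$; I must estimate $\left(v(S_{m,N})\right)^{1/s-1/q}\left(\sum_{k\in S_{m,N}}|I_\alpha x(k)|^q\omiga(k)^q\right)^{1/q}$ with $v=\omiga^q$, and bound it by $\|x\|_{l^p_q(\omiga^p,\omiga^q)}$. The first step is the standard splitting $x=x\kaf_{2S_{m,N}}+x\kaf_{(2S_{m,N})^c}=:x_1+x_2$, so that $I_\alpha x(k)=I_\alpha x_1(k)+I_\alpha x_2(k)$ on $S_{m,N}$, and I treat the two pieces separately.

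For the local part $I_\alpha x_1$: since $\omiga\in\huaA(p,q)$, Theorem \ref{t1.1} gives $\left(\sum_{k\in\Z}|I_\alpha x_1(k)\omiga(k)|^q\right)^{1/q}\le C\left(\sum_{k\in\Z}|x_1(k)\omiga(k)|^p\right)^{1/p}=C\left(\sum_{k\in 2S_{m,N}}|x(k)\omiga(k)|^p\right)^{1/p}$. The right-hand side is, up to the factor $\left(v(2S_{m,N})\right)^{1/p-1/q}$, controlled by $\|x\|_{l^p_q(\omiga^p,\omiga^q)}$ evaluated on $2S_{m,N}$; and because $\omiga^q\in\huaA_\infty$ (Proposition \ref{m2.13}(i)), the doubling property of $\huaA_\infty$ weights yields $v(2S_{m,N})\le C\,v(S_{m,N})$, so the power weights match and the local term is $\le C\|x\|_{l^p_q(\omiga^p,\omiga^q)}$ after multiplying by $\left(v(S_{m,N})\right)^{1/s-1/q}$; here one checks the exponent arithmetic $\frac1s-\frac1q=\frac1q-\frac2p+\cdots$ closes using $s=\frac{qp}{2p-q}$ and $\frac1q=\frac1p-\alpha$.

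For the tail part $I_\alpha x_2$: for $k\in S_{m,N}$ and $i\notin 2S_{m,N}$ one has $|k-i|\gtrsim |i-m|$ and more precisely $|k-i|\ge c\,2^{j}N$ on the dyadic annulus $2^jS_{m,N}\setminus 2^{j-1}S_{m,N}$, so $|I_\alpha x_2(k)|\le C\sum_{j\ge1}(2^jN)^{\alpha-1}\sum_{i\in 2^{j+1}S_{m,N}}|x(i)|$. Each inner sum is estimated by discrete Hölder with the pair $(p,p')$ against the weight $\omiga^p$, producing $\left(\sum_{i\in 2^{j+1}S_{m,N}}|x(i)\omiga(i)|^p\right)^{1/p}\left(\sum_{i\in 2^{j+1}S_{m,N}}\omiga(i)^{-p'}\right)^{1/p'}$; the first factor is bounded by $\left(v(2^{j+1}S_{m,N})\right)^{1/p-1/q}\|x\|_{l^p_q(\omiga^p,\omiga^q)}$, and the $\huaA(p,q)$ condition converts $\left(\sum\omiga^{-p'}\right)^{1/p'}$ into $C\,|2^{j+1}S_{m,N}|\left(\sum\omiga^q\right)^{-1/q}=C\,|2^{j+1}S_{m,N}|\,v(2^{j+1}S_{m,N})^{-1/q}$. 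Collecting powers of $|2^jS_{m,N}|\sim 2^jN$ and using $\frac1q=\frac1p-\alpha$, the $j$-sum becomes a geometric series $\sum_j (2^jN)^{\text{(negative power)}}$ times $v(2^{j+1}S_{m,N})^{1/p-2/q}$; since $\omiga^q\in\huaA_\infty$ one has a reverse-doubling/growth control $v(2^{j+1}S_{m,N})\ge c\,v(S_{m,N})$ (and the relevant exponent $\frac1p-\frac2q$ has the right sign because $q<2p$), so the series converges and $|I_\alpha x_2(k)|\le C\,v(S_{m,N})^{1/p-1/q-1/q}\cdot(\cdots)$; multiplying by $\left(\sum_{k\in S_{m,N}}\omiga(k)^q\right)^{1/q}=v(S_{m,N})^{1/q}$ and by $\left(v(S_{m,N})\right)^{1/s-1/q}$ and checking once more that all exponents of $v(S_{m,N})$ cancel gives the bound $C\|x\|_{l^p_q(\omiga^p,\omiga^q)}$.

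The main obstacle I anticipate is the bookkeeping of exponents: one must verify simultaneously that the powers of $v(S_{m,N})$ (or $|S_{m,N}|$) cancel in both the local and tail estimates, that the constraint $q<2p$ is exactly what makes the dyadic series in the tail converge (so that $s=\frac{qp}{2p-q}$ is a positive finite exponent and $\frac1s-\frac1q>0$), and that the doubling of $\omiga^q\in\huaA_\infty$ can legitimately be invoked at every scale $2^jS_{m,N}$ with a constant uniform in $j$ — this last point needs the reverse-doubling form of the $\huaA_\infty$ growth estimate, which follows from Proposition \ref{m2.11} applied iteratively. Once these arithmetic identities are pinned down, taking the supremum over all $S_{m,N}$ finishes the proof.
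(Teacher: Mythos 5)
Your proposal is correct and follows essentially the same route as the paper: the same splitting $x=x\kaf_{2S_{m,N}}+x\kaf_{(2S_{m,N})^c}$, Theorem \ref{t1.1} for the local piece, and dyadic annuli plus H\"older plus the $\huaA(p,q)$ condition plus geometric growth of $\omiga^q(2^jS_{m,N})$ for the tail, with the condition $q<2p$ making the exponent $\frac1p-\frac2q$ negative so the series converges. The only cosmetic difference is that the paper packages the needed reverse-doubling estimate as Lemma \ref{l3.14}(ii) (proved via the doubling bound of Lemma \ref{l3.14}(i)) rather than extracting it from Proposition \ref{m2.11}, but your version of that step is also workable.
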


The next lemma plays an important role in our proof of Theorem \ref{t3.10}. We say that $\omiga$ satisfies the {\it reverse doubling condition} if $\omiga$ has the property (\ref{eq3.26}) of the following lemma.

\begin{lemma}\label{l3.14}
Let $1\le p<\infty$ and $\omiga\in\huaA_p$.
\begin{itemize}
\item[\rm(i)]\cite[Proposition 2.9]{hao}
Then there exists a positive constant $C>0$ such that
\begin{equation*}
\omiga(\lambda S_{m,N})\le \left(\frac{3}{2}C\right)^p\lambda^p\omiga(S_{m,N})\quad\text{and}\quad\omiga(nLI)\le Cn^p\omiga(I),
\end{equation*}
where $I:=\{(j-1)2^N+1,\cdots,j2^N\}$, $nLI:=\{(j-n)2^N+1,\cdots,j2^N\}$, $j\in\Z$ and $N\in\N$.
\item[\rm(ii)]
There exists a positive constant $C_1>1$ such that
\begin{equation}\label{eq3.26}
\omiga(2S_{m,N})\ge C_1\omiga(S_{m,N}),\,m\in\Z,\, N\in\N.
\end{equation}
\end{itemize}
\end{lemma}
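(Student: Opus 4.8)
The plan is to deduce part (ii) directly from the defining $\huaA_p$ condition; part (i) is already available from \cite{hao}. First I would reduce to the case $1<p<\infty$: by Proposition \ref{p2.1} we have $\huaA_1\subset\huaA_2$, so the claim for $\omiga\in\huaA_1$ follows from the claim for $\huaA_2$ weights. I would also point out that the inequality is only meaningful for $N\ge1$, since for $N=0$ one has $2S_{m,0}=\{m\}=S_{m,0}$; accordingly I assume $N\ge1$ throughout, which is the range used in the proof of Theorem \ref{t3.10}.

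The technical heart is an elementary \emph{sub-interval doubling} estimate read off from Definition \ref{d2.6}: if $\omiga\in\huaA_p$ with $1<p<\infty$ and $I\subset J$ are bounded intervals in $\Z$, then
\begin{equation*}
\omiga(J)\le\|\omiga\|_{\huaA_p(\Z)}\left(\frac{|J|}{|I|}\right)^p\omiga(I).
\end{equation*}
To see this I would argue in three steps. (a) The $\huaA_p$ inequality on $J$ gives $\sum_{J}\omiga\le\|\omiga\|_{\huaA_p(\Z)}|J|^p\big(\sum_{J}\omiga^{-1/(p-1)}\big)^{-(p-1)}$. (b) Since $I\subset J$ and the summands are positive, $\sum_{J}\omiga^{-1/(p-1)}\ge\sum_{I}\omiga^{-1/(p-1)}$, and as $t\mapsto t^{-(p-1)}$ is decreasing on $(0,\infty)$ this upgrades (a) to $\sum_{J}\omiga\le\|\omiga\|_{\huaA_p(\Z)}|J|^p\big(\sum_{I}\omiga^{-1/(p-1)}\big)^{-(p-1)}$. (c) Discrete H\"older's inequality applied to $|I|=\sum_{I}\omiga^{1/p}\,\omiga^{-1/p}$ with exponents $p$ and $p'=p/(p-1)$ yields $|I|^p\le\big(\sum_{I}\omiga\big)\big(\sum_{I}\omiga^{-1/(p-1)}\big)^{p-1}$, i.e. $\big(\sum_{I}\omiga^{-1/(p-1)}\big)^{-(p-1)}\le|I|^{-p}\,\omiga(I)$. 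Combining (a)--(c) proves the displayed estimate.

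Next I would decompose the symmetric interval. For $N\ge1$ write $2S_{m,N}=S_{m,N}\cup T$ as a disjoint union with $T:=\{k\in\Z: N<|k-m|\le2N\}$, and single out the right half $T^+:=\{m+N+1,\dots,m+2N\}$, an interval with $|T^+|=N$ and $T^+\subset 2S_{m,N}\setminus S_{m,N}$. Setting $J:=S_{m,N}\cup T^+=\{m-N,\dots,m+2N\}$, an interval of cardinality $3N+1\le4N$ with $|J|/|T^+|\le4$, the sub-interval doubling estimate with $I=T^+$ gives
\begin{equation*}
\omiga(S_{m,N})\le\omiga(J)\le 4^p\,\|\omiga\|_{\huaA_p(\Z)}\,\omiga(T^+),
\end{equation*}
hence $\omiga(T^+)\ge\big(4^p\|\omiga\|_{\huaA_p(\Z)}\big)^{-1}\omiga(S_{m,N})$. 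Since $T^+$ and $S_{m,N}$ are disjoint subsets of $2S_{m,N}$,
\begin{equation*}
\omiga(2S_{m,N})\ge\omiga(S_{m,N})+\omiga(T^+)\ge\left(1+\frac{1}{4^p\|\omiga\|_{\huaA_p(\Z)}}\right)\omiga(S_{m,N}),
\end{equation*}
which is the asserted reverse doubling with $C_1:=1+\big(4^p\|\omiga\|_{\huaA_p(\Z)}\big)^{-1}>1$, independent of $m$ and $N$.

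I do not expect a serious obstacle here: the argument is short once the sub-interval doubling estimate is isolated. The only points requiring care are bookkeeping ones — keeping the correct direction of the inequality $\big(\sum_J\omiga^{-1/(p-1)}\big)^{-(p-1)}\le\big(\sum_I\omiga^{-1/(p-1)}\big)^{-(p-1)}$ when passing from $J$ to $I$, matching the H\"older exponents $p$ and $p'$ in step (c), and deciding how to phrase the degeneracy at $N=0$ (either by restricting to $N\ge1$, as above, or by adopting the convention $2\{i_0\}=\{i_0-1,i_0,i_0+1\}$ from the Notation list).
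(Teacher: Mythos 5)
Your argument is correct, and it reaches the reverse doubling inequality by a genuinely different route from the paper. The paper's proof of (ii) leans on part (i): for $N\ge 3$ it picks a symmetric interval $J\subset 2S_{m,N}\setminus S_{m,N}$ of side length $\lfloor\frac{N-1}{2}\rfloor$, notes $S_{m,N}\subset 10J$, and invokes the dilation bound $\omiga(10J)\le C\,15^p\,\omiga(J)$ from (i); for $0\le N\le 2$ it switches to the singleton $J=\{m+N+1\}$ and the left-dilation bound $\omiga(6LJ)\le C6^p\omiga(J)$, again from (i). You instead prove the sub-interval comparison $\omiga(J)\le\|\omiga\|_{\huaA_p}(|J|/|I|)^p\,\omiga(I)$ directly from Definition \ref{d2.6} via H\"older (your steps (a)--(c) are all correct, including the direction reversal under $t\mapsto t^{-(p-1)}$), and then apply it with $I=T^+=\{m+N+1,\dots,m+2N\}$ and $J=\{m-N,\dots,m+2N\}$, where $|J|/|I|=(3N+1)/N\le 4$ uniformly in $N\ge1$. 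This buys you a single uniform argument with no case split on $N$ and an explicit constant $C_1=1+(4^p\|\omiga\|_{\huaA_p})^{-1}$, at the modest cost of re-deriving a comparison estimate that the paper gets for free by citing \cite[Proposition 2.9]{hao}; the reduction of $p=1$ to $p=2$ via Proposition \ref{p2.1} is legitimate (one could equally run the $\huaA_1$ definition directly). Your remark about $N=0$ is a fair catch: under the notation $\lambda S_{m,N}=\{k:|k-m|\le\lambda N\}$ one has $2S_{m,0}=S_{m,0}$ and no $C_1>1$ is possible, so the paper's treatment of the case $0\le N\le 2$ (which places $J=\{m+N+1\}$ inside $2S_{m,N}$) implicitly uses the convention $2\{i_0\}=\{i_0-1,i_0,i_0+1\}$ from the Notation list; with that convention your argument also covers $N=0$, since then $J=\{m,m+1\}$ and $|J|/|T^+|=2\le 4$.
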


\begin{proof}
$\rm(ii)$ When $N\ge 3$, we fix a symmetric interval with center $m$ and cardinality $2N+1$, i.e.,  $S_{m,N}=\{k:\left|k-m\right|\le N\}$. Then we can choose a symmetric interval $J\subset 2S_{m,N}$ with side length $\lfloor \frac{N-1}{2}\rfloor$ which is disjoint from the $S_{m,N}$ and
\begin{equation*}
\omiga(S_{m,N})+\omiga(J)\le \omiga(2S_{m,N}).
\end{equation*}
On the other hand, since $S_{m,N}\subset 10J$, by Lemma \ref{l3.14}(i), there exists a positive constant $C$ such that $\omiga(S_{m,N})\le \omiga(10J)\le C(15)^p\omiga(J)$, thus we have
\begin{equation*}
\omiga(S_{m,N})+\frac{\omiga(S_{m,N})}{C(15)^p}\le \omiga(2S_{m,N}).
\end{equation*}

When $0\le N\le 2$, for fixed symmetric interval $S_{m,N}=\{k:\left|k-m\right|\le N\}$, we can choose a interval  $J=\{m+N+1\}\subset 2S_{m,N}$ which is disjoint from the $S_{m,N}$ and
\begin{equation}\label{eq4.2}
\omiga(S_{m,N})+\omiga(J)\le \omiga(2S_{m,N}).
\end{equation}
On the other hand, we perform left dilation of the $J$, i.e.,
\begin{equation*}
6LJ:=\{m+N-4,\cdots,m+N+1\},
\end{equation*}
then we have $S_{m,N}\subset 6LJ$, which together with Lemma \ref{l3.14}(i) implies that
\begin{equation*}
\omiga(S_{m,N})\le \omiga(6LJ)\le C6^p\omiga(J),
\end{equation*}
where C is a positive constant.
From this and (\ref{eq4.2}), it follows that
\begin{equation*}
\omiga(S_{m,N})+\frac{\omiga(S_{m,N})}{C(6)^p}\le \omiga(2S_{m,N}).
\end{equation*}
Combining the above estimates, we finish the proof of Lemma \ref{l3.14}.
\end{proof}

Next we shall prove the Theorem \ref{t3.10}.
\begin{proof}[Proof of Theorem \ref{t3.10}]
We may assume that $x$ is a nonnegative sequence or else replace $x$ by $|x|=\{|x(k)|\}_{k\in\Z}$. Fix a symmetric interval $S_{m,N}$ and decompose $x=x_1+x_2$ with $x_1=x\kaf_{2S_{m,N}}$, we obtain
%\begin{equation*}
%I_\alpha x(k)=\sum_{i\in\Z\setminus{\{k\}}}\frac{x(i)}{\left|i-k\right|^{1-\alpha}}=\sum_{i\in\Z\setminus{\{k\}}}\frac{x(i)
%\kaf_{2S_{m,N}}(i)}{\left|i-k\right|^{1-\alpha}}+\sum_{i\in\Z\setminus{\{k\}}}\frac{x(i)\kaf_{(2S_{m,N})^c}(i)}{\left|i-k\right|^{1-\alpha}}.
%\end{equation*}
%Then we have the inequality
\begin{small}
\begin{equation*}
\begin{aligned}
&\left(\sum\limits_{k\in S_{m,N}}\omiga(k)^q\right)^{1-\frac{q}{p}}\sum\limits_{k\in S_{m,N}}\left|I_\alpha x(k)\right|^q\omiga(k)^q\\
\le& C\left(\sum\limits_{k\in S_{m,N}}\omiga(k)^q\right)^{1-\frac{q}{p}}\sum\limits_{k\in S_{m,N}}\left(\left|I_\alpha x_1(k)\right|^q+\left|I_\alpha x_2(k)\right|^q\right)\omiga(k)^q\\
=&C\left(\sum\limits_{k\in S_{m,N}}\omiga(k)^q\right)^{1-\frac{q}{p}}\sum\limits_{k\in S_{m,N}}|I_\alpha x_1(k)|^q\omiga(k)^q+C\left(\sum\limits_{k\in S_{m,N}}\omiga(k)^q\right)^{1-\frac{q}{p}}\sum\limits_{k\in S_{m,N}}|I_\alpha x_2(k)|^q\omiga(k)^q\\
=:&\rm{I}+\rm{II}
\end{aligned}
\end{equation*}
\end{small}

To estimate term $\rm I$, using the fact that $I_\alpha$ is bounded from $l^p_{\omiga^p}$ to $l^q_{\omiga^q}$ with $\omiga\in\huaA(p,q)$ (see Theorem \ref{t1.1}), we have
\begin{align*}
\sum\limits_{k\in S_{m,N}}\left|I_\alpha x_1(k)\right|^q\omiga(k)^q &\le \sum\limits_{k\in\Z}\left|I_\alpha x_1(k)\right|^q\omiga(k)^q\le \left(\sum\limits_{k\in\Z}\left|x_1(k)\right|^p\omiga(k)^p\right)^\frac{q}{p}\\
&= \left(\sum\limits_{k\in S_{m,N}}\omiga(k)^q\right)^{1-\frac{q}{p}}\left(\sum\limits_{k\in S_{m,N}}\left|x(k)\right|^p\omiga(k)^p\right)^\frac{q}{p}\left(\sum\limits_{k\in S_{m,N}}\omiga(k)^q\right)^{\frac{q}{p}-1}\\
&\le C\|x\|^q_{{l^p_q}_{(\omiga^p,\omiga^q)}}\left(\sum\limits_{k\in S_{m,N}}\omiga(k)^q\right)^{\frac{q}{p}-1}.
\end{align*}
It implies that
$
\|I_\alpha x_1\|_{{l^q_\frac{qp}{2p-q}}_{(\omiga^q)}}\le C\|x\|_{{l^p_q}_{(\omiga^p,\omiga^q)}}.
$

To estimate term $\rm II$, for every $k\in S_{m,N}$ and $i\in(2S_{m,N})^c$, we have $|m-k|\le N$, $|k-i|\ge N+1$ and
$|m-i|\le|m-k|+|k-i|\le 2|k-i|$. Thus we obtain
\begin{equation*}
\left|I_\alpha x_2(k)\right|\le\sum_{i\in\Z\setminus{\{k\}}}\frac{\left|x_2(i)\right|}{\left|i-k\right|^{1-\alpha}}\le 2^{1-\alpha}\sum_{\left|i-m\right|>2N}
\frac{\left|x(i)\right|}{\left|m-i\right|^{1-\alpha}}.
\end{equation*}
Then
\begin{small}
\begin{equation*}
{\rm II}\le 2^{1-\alpha}\left(\sum\limits_{k\in S_{m,N}}\omiga(k)^q\right)^{2-\frac{q}{p}}\left(\sum\limits_{\left|i-m\right|>2N}
\frac{\left|x(i)\right|}{\left|i-m\right|^{1-\alpha}}\right)^q.
\end{equation*}
\end{small}
From discrete H\"older's inequality, $\omiga\in\huaA(p,q)$ and $\frac{1}{q}=\frac{1}{p}-\alpha$, it follows that
\begin{small}
\begin{align*}
&\sum\limits_{\left|m-i\right|>2N}\frac{\left|x(i)\right|}{\left|m-i\right|^{1-\alpha}}\\
\quad &=\sum\limits^\infty_{j=1}\sum\limits_{2^jN<\left|m-i\right|\le 2^{j+1}N}\frac{\left|x(i)\right|}{\left|m-i\right|^{1-\alpha}}\\
%&\le\sum\limits^\infty_{j=1}\frac{1}{(2^jN)^{1-\alpha}}\sum\limits_{\left|m-i\right|\le 2^{j+1}N}\left|x(i)\right|\\
&\le\sum\limits^\infty_{j=1}\frac{1}{(2^jN)^{1-\alpha}}\left(\sum\limits_{\left|m-i\right|\le 2^{j+1}N}\left|x(i)\right|^p\omiga(i)^p\right)^\frac{1}{p}\left(\sum\limits_{\left|m-i\right|\le 2^{j+1}N}\omiga(i)^{-p'}\right)^\frac{1}{p'}\\
&\le C\sum\limits^\infty_{j=1}\frac{1}{(2^jN)^{1-\alpha}}\left(\sum\limits_{\left|m-i\right|\le 2^{j+1}N}\left|x(i)\right|^p\omiga(i)^p\right)^\pfzy\left(\sum\limits_{\left|m-i\right|\le 2^{j+1}N}\omiga(i)^{q}\right)^{-\frac{1}{q}}\left|S_{m,2^{j+1}N}\right|^{\frac{1}{q}+\frac{1}{p'}}\\
&\le C\sum\limits^\infty_{j=1}(2^jN)^{\alpha-1}\|x\|_{{l^p_q}_{(\omiga^p,\omiga^q)}}\left(\sum\limits_{\left|m-i\right|\le 2^{j+1}N}\omiga(i)^q\right)^{\frac{1}{p}-\frac{2}{q}}(2\cdot 2^{j+1}N+1)^{\frac{1}{q}+\frac{1}{p'}}\\
&\le C\sum\limits^\infty_{j=1}(2^jN)^{\alpha-1}(3\cdot 2^{j+1}N)^{\frac{1}{q}+\frac{1}{p'}}\|x\|_{{l^p_q}_{(\omiga^p,\omiga^q)}}\left(\sum\limits_{\left|m-i\right|\le 2^{j+1}N}\omiga(i)^q\right)^{\frac{1}{p}-\frac{2}{q}}\\
&\le C6^{\frac{1}{q}+\frac{1}{p'}}\sum\limits^\infty_{j=1}(2^jN)^{\alpha-1+\frac{1}{q}+\frac{1}{p'}}\|x\|_{{l^p_q}_{(\omiga^p,\omiga^q)}}\left(\sum\limits_{\left|m-i\right|\le 2^{j+1}N}\omiga(i)^q\right)^{\frac{1}{p}-\frac{2}{q}}\\
&\le C\|x\|_{{l^p_q}_{(\omiga^p,\omiga^q)}}\sum\limits^\infty_{j=1}\left(\sum\limits_{\left|m-i\right|\le 2^{j+1}N}\omiga(i)^q\right)^{\frac{1}{p}-\frac{2}{q}}.
\end{align*}
\end{small}
By Lemma \ref{l3.14} and $q<2p$, we have
\begin{small}
\begin{equation*}
\begin{aligned}
&\left(\sum\limits_{k\in S_{m,N}}\omiga(k)^q\right)^{2-\frac{q}{p}}\left(\sum\limits_{\left|i-m\right|>2N}
\frac{\left|x(i)\right|}{\left|m-i\right|^{1-\alpha}}\right)^q\\
\le & C\|x\|^q_{{l^p_q}_{(\omiga^p,\omiga^q)}}\left(\sum\limits^\infty_{j=1}\frac{\omiga^q(S_{m,N})^{\frac{2}{q}-\frac{1}{p}}}
{\omiga^q(S_{m,2^{j+1}N})^{\frac{2}{q}-\frac{1}{p}}}\right)^q
\le C\|x\|^q_{{l^p_q}_{(\omiga^p,\omiga^q)}}\left(\sum\limits^\infty_{j=1}({C_1}^{j+1})^{\frac{1}{p}-\frac{2}{q}}\right)^q
\le C\|x\|^q_{{l^p_q}_{(\omiga^p,\omiga^q)}}.
\end{aligned}
\end{equation*}
\end{small}
Combining the above estimates, we finish the proof of Theorem \ref{t3.10}.
\end{proof}

\begin{theorem}\label{t3.11}
Let $0<\alpha<1$, $1<p<\frac{1}{\alpha}$, $\frac{1}{q}=\frac{1}{p}-\alpha$ and $q<2p$. Set $s=\frac{qp}{2p-q}$ and $\omiga\in\huaA(p,q)$. Then the discrete fractional maximal operator $\M_\alpha$ is bounded from ${l^p_q}_{(\omiga^p,\omiga^q)}$ to ${l^q_s}_{(\omiga^q)}$
\end{theorem}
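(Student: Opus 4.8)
\textbf{Proof plan for Theorem \ref{t3.11}.}
The plan is to deduce the boundedness of $\M_\alpha$ on discrete weighted Morrey spaces directly from the already-established boundedness of $I_\alpha$ in Theorem \ref{t3.10}, using the pointwise domination $\M_\alpha x(k)\le I_\alpha(|x|)(k)$ from Lemma \ref{t3.6}. First I would reduce to the case of nonnegative $x$, since $\M_\alpha x=\M_\alpha(|x|)$; this also makes $I_\alpha(|x|)$ well defined since $x\in {l^p_q}_{(\omiga^p,\omiga^q)}$ implies $|x|\in l^p_q$ (the unweighted membership following from $\omiga\in\huaA(p,q)$ together with Proposition \ref{m2.8} applied on a single interval, or more simply because the weighted sum controls the unweighted one after Hölder), which via Proposition \ref{l3.15}$\rm(ii)$ (recall the hypothesis $q<2p$ is in force) guarantees absolute convergence of $I_\alpha(|x|)$.

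Next, from Lemma \ref{t3.6} we have $0\le \M_\alpha x(k)=\M_\alpha(|x|)(k)\le I_\alpha(|x|)(k)$ for every $k\in\Z$. Raising to the $q$-th power, multiplying by $\omiga(k)^q$, and summing over any fixed symmetric interval $S_{m,N}$ gives
\begin{equation*}
\sum_{k\in S_{m,N}}\left|\M_\alpha x(k)\right|^q\omiga(k)^q\le \sum_{k\in S_{m,N}}\left|I_\alpha(|x|)(k)\right|^q\omiga(k)^q.
\end{equation*}
Multiplying both sides by $\bigl(v(S_{m,N})\bigr)^{1-q/p}$ with $v=\omiga^q$ (the weight defining the Morrey norm on the target side, as in Definition \ref{d2.1}$\rm(i)$), taking the $1/q$ power, and then the supremum over all $m\in\Z$, $N\in\N$, yields $\|\M_\alpha x\|_{{l^q_s}_{(\omiga^q)}}\le \|I_\alpha(|x|)\|_{{l^q_s}_{(\omiga^q)}}$, where $s=\frac{qp}{2p-q}$ so that $\frac1q-\frac1s=1-\frac{q}{p}$, matching the exponent structure of Theorem \ref{t3.10}.

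Finally, apply Theorem \ref{t3.10} to the nonnegative sequence $|x|$: since $\omiga\in\huaA(p,q)$ and the parameters satisfy $0<\alpha<1$, $1<p<\frac1\alpha$, $\frac1q=\frac1p-\alpha$ and $q<2p$, we get $\|I_\alpha(|x|)\|_{{l^q_s}_{(\omiga^q)}}\le C\|\,|x|\,\|_{{l^p_q}_{(\omiga^p,\omiga^q)}}=C\|x\|_{{l^p_q}_{(\omiga^p,\omiga^q)}}$. Chaining the two inequalities gives $\|\M_\alpha x\|_{{l^q_s}_{(\omiga^q)}}\le C\|x\|_{{l^p_q}_{(\omiga^p,\omiga^q)}}$, which is the claim. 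I do not expect any serious obstacle here; the only point requiring a line of care is verifying that the pointwise bound survives passing to the Morrey norm, i.e.\ that the same weight $\omiga^q$ and the same Morrey exponent $s$ appear on both sides — but this is exactly how the theorems were set up, so the argument is essentially immediate once Lemma \ref{t3.6} and Theorem \ref{t3.10} are in hand.
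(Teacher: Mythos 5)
Your proposal is correct and follows exactly the paper's own proof: the paper likewise deduces Theorem \ref{t3.11} immediately from the pointwise domination $\M_\alpha x(k)\le I_\alpha(|x|)(k)$ of Lemma \ref{t3.6} together with Theorem \ref{t3.10}. (Only a cosmetic slip: the exponent identity should read $q\bigl(\tfrac1s-\tfrac1q\bigr)=1-\tfrac{q}{p}$, which is precisely the factor $(v(S_{m,N}))^{1-q/p}$ you correctly use.)
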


\begin{proof}
The Theorem \ref{t3.11} immediately follows from the pointwise inequality $\M_\alpha x(k)\le I_\alpha(\left|x\right|)(k)$ for $0<\alpha<1$ (see Lemma \ref{t3.6}) and Theorem \ref{t3.10}.
\end{proof}

\bigskip\medskip

\noindent Xuebing Hao, Shuai Yang and Baode Li (Corresponding author),
\medskip

\noindent College of Mathematics and System Sciences\\
Xinjiang University\\
Urumqi, 830017\\
P. R. China
\smallskip

\noindent{E-mail }:\\
\texttt{1659230998@qq.com} (Xuebing Hao)\\
\texttt{2283721784@qq.com} (Shuai Yang)\\
\texttt{baodeli@xju.edu.cn} (Baode Li)\\
\bigskip \medskip


\begin{thebibliography}{30}
\vspace{-0.3cm}
\bibitem{2022Abe}
%1
Y. Abe and Y. Sawano, Littlewood-Paley characterization of discrete Morrey spaces and its application to the discrete martingale transform, Preprint Series, 2022, 134.

\vspace{-0.3cm}
\bibitem{2022A}
%2
R. A. Aliev, A. N. Ahmadova and A. F. Huseynli, Boundedness of the discrete Ahlfors-Beurling transform on discrete Morrey spaces, Inst. Math. Mech., 2022, 48(1): 123-131.

\vspace{-0.3cm}
\bibitem{2021A}
%3
A. R. Avazaga and A. A. Nofel, Boundedness of discrete Hilbert transform on discrete Morrey spaces, Ufa Math. J., 2021, 13(1): 98-109.

\vspace{-0.3cm}
\bibitem{wiener20}
%4
A. B\"ottcher and M. Seybold, Discrete Wiener-Hopf operators on spaces with Muckenhoupt weight, Studia Math., 2000, 143(2): 121-144.

\vspace{-0.3cm}
\bibitem{1987}
%6
F. Chiarenza, Morrey spaces and Hardy-Littlewood maximal function, Rend. Mat. Apple., 1987, 7(7): 273-279.

\vspace{-0.3cm}
\bibitem{2021di}
A. D\'iaz-Gonz\'alez, F. Marcell\'an, H. Pijeira-Cabrera, et al, Discrete-continuous Jacobi-Sobolev spaces and Fourier series, B. Malays. Math. Sci. So., 2021, 44: 571-598.

%\vspace{-0.3cm}
%\bibitem{shu}
%%7
%J. Duoandikoetxea, Fourier Analysis, Grad. Stud. Math, Providence, 2001.

\vspace{-0.3cm}
\bibitem{fu}
%8
X. Fu, Weighted boundedness of discrete fractional integrals, Sci. Sin. Math., 2021, 51(2): 333-342.

\vspace{-0.3cm}
\bibitem{norm}
%10
J. Garcia-Cuerva and J. L. Rubio de Francia, Weighted Norm Inequalities and Related Topics, Elsevier, New York, 2011.

\vspace{-0.3cm}
\bibitem{lsmyxz18}
%9
H. Gunawan, E. Kikianty and C. Schwanke, Discrete Morrey spaces and their inclusion properties, Math. Nachr., 2018, 291(8-9): 1283-1296.

\vspace{-0.3cm}
\bibitem{jdszzlsm}
%12
H. Gunawan and C. Schwanke, The Hardy-Littlewood maximal operator on discrete Morrey spaces, Mediterr. J. Math., 2019, 16(1): 1-12.

\vspace{-0.3cm}
\bibitem{hao}
X. B. Hao, S. Yang and B. D. Li, The Hardy-Littlewood maximal operator on discrete weighted Morrey spaces, 2023, arXiv: 2309.00804.

\vspace{-0.3cm}
\bibitem{2020H}
%11
D. D. Haroske and L. Skrzypczak, Morrey sequence spaces: Pitt's theorem and compact embeddings, Constr. Approx., 2020, 51(3): 505-535.

\vspace{-0.3cm}
\bibitem{2019K}
%13
E. Kikianty and C. Schwanke, Discrete Morrey spaces are closed subspaces of their continuous counterparts, Banach Center Publications, 2019, 119: 223-231.

\vspace{-0.3cm}
\bibitem{jdM}
%14
Y. Komori and S. Shirai, Weighted Morrey spaces and a singular integral operator, Math. Nachr., 2009, 282(2): 219-231.

\vspace{-0.3cm}
\bibitem{lu}
%15
S. Lu, Y. Ding and D. Yan, Singular Integrals and Related Topics, World Scientific, Singapore, 2007.

\vspace{-0.3cm}
\bibitem{dimorrey}
%16
C. B. Morrey, On the solutions of quasi-linear elliptic partial differential equations, TAMS, 1938, 43(1): 126-166.

%\vspace{-0.3cm}
%\bibitem{2018fsc}
%S. Nakamura, Y. Sawano and H. Tanaka, The fractional operators on weighted Morrey spaces. J. Geom. Anal., 2018, 28(2): 1502-1524

\vspace{-0.3cm}
\bibitem{1974m}
B. Muckenhoupt and R. Wheeden, Weighted norm inequalities for fractional integrals, T. Am. Math. Soc., 1974, 192: 261-274.

\vspace{-0.3cm}
\bibitem{2009p}
L. B. Pierce, Discrete analogues in harmonic analysis, Princeton University, 2009.

\vspace{-0.3cm}
\bibitem{lsHardy22}
%[6]
P. Rocha. Fractional series operators on discrete Hardy spaces. Acta Math. Hung., 2022, 168(1): 202-216.

\vspace{-0.3cm}
\bibitem{1970stein}
E. M. Stein, Singular Integrals Differentiable Properties of Functions, Princeton Univ. Press, Princeton, NJ, 1970.

\vspace{-0.3cm}
\bibitem{bzb}
A. S. Swarup and A. M. Alphonse, The boundedness of fractional Hardy-Littlewood maximal operator on variable $l^p$ spaces using Calder\'on-Zygmund decomposition, 2022, arXiv: 2204.04331.

%\vspace{-0.3cm}
%\bibitem{lsq}
%%17
%S. H. Saker and R. P. Agarwal, Theory of discrete Muckenhoupt weights and discrete Rubio de Francia extrapolation theorems, Appl. Anal. Discrete Math., 2021, 15(2): 295-316.

%\vspace{-0.3cm}
%\bibitem{2019sa}
%S. H. Saker and I. Kubiaczyk, Higher summability and discrete weighted Muckenhoupt and Gehring type inequalities, Proc. Edinb. Math. Soc. (2), 2019, 62(4): 949-973.

\vspace{-0.3cm}
\bibitem{2023W}
M. Wei and X. Liu, Sharp weak bounds for discrete Hardy operator on discrete central Morrey spaces, AIMS Math., 2023, 8(2): 5007-5015.

\end{thebibliography}
\end{document}